\newtheorem{thm}{Theorem}
\newtheorem{lemma}{Lemma}
\newtheorem{propn}{Proposition}
\theoremstyle{definition}
\newtheorem*{defn}{Definition}
\newtheorem*{remark}{Remark}
\newcommand{\be}{\begin{equation}}
\newcommand{\ee}{\end{equation}}
\def\Spin{\mathop{\rm Spin}\nolimits}
\def\Char{\mathop{\mathfrak{Char}}\nolimits}
\newcommand{\C}{\mathbb{C}}
\newcommand{\Z}{\mathbb{Z}}
\newcommand{\R}{\mathbb{R}}
\newcommand{\g}{\mathfrak{g}}
\newcommand{\h}{\mathfrak{h}}
\newcommand{\sov}{\mathfrak{so}(V)}
\newcommand{\sotv}{\widetilde{\so}(V)}
\newcommand{\so}{\mathfrak{so}}
\newcommand{\spp}{\mathfrak{sp}}
\newcommand{\sot}{\widetilde{\mathfrak{so}}}
\renewcommand{\sl}{\mathfrak{sl}}
\newcommand{\p}{\mathfrak{p}}
\newcommand{\dd}{\mathfrak{d}}
\newcommand{\gt}{{\widetilde{\g}}}
\newcommand{\dt}{{\widetilde{d}}}
\renewcommand{\dh}{{\hat{d}}}
\newcommand{\hti}{{\widetilde{\h}}}
\newcommand{\Lt}{\widetilde{L}}
\newcommand{\Dt}{\widetilde{D}}
\newcommand{\gh}{{\widehat{\g}}}
\newcommand{\hh}{{\hat{\h}}}
\newcommand{\Vh}{{\hat{V}}}
\newcommand{\rhoh}{{\hat{\rho}}}
\newcommand{\muh}{{\hat{\mu}}}
\newcommand{\Ah}{{\hat{A}}}
\newcommand{\rhot}{{\tilde{\rho}}}
\newcommand{\thetat}{{\tilde{\theta}}}
\newcommand{\gb}{\breve{\g}}
\newcommand{\Rb}{\breve{R}}
\newcommand{\rhob}{{\breve{\rho}}}
\newcommand{\mut}{{\tilde{\mu}}}
\newcommand{\sigt}{\tilde{\sigma}}
\newcommand{\Rt}{\tilde{R}}
\newcommand{\Pc}{{\mathcal P}}
\newcommand{\PR}{\Pc_R}
\newcommand{\W}{{\mathcal W}}
\newcommand{\sm}{\mathfrak{s}}
\newcommand{\Or}{$\mathcal{I}_{O }\ $}
\newcommand{\OR}{$\mathcal{I}_{R }\ $}
\renewcommand{\O}{\mathcal{O}}
\newcommand{\Ow}{\mathcal{O}_{\text{weak}}}
\newcommand{\aff}{\mbox{$\mathcal{I}_{A}$}}
\newcommand{\sh}[1]{{#1}}
\newcommand{\Xt}{\widetilde{X}}
\newcommand{\Zt}{\widetilde{Z}}
\newcommand{\Ft}{\widetilde{F}}
\newcommand{\Zh}{\hat{Z}}
\newcommand{\Fh}{\hat{F}}
\newcommand{\leavethisout}[1] {}
\begin{document}
\title[Tensor factorization]
      {Tensor factorization and Spin construction for Kac-Moody algebras}

\author{Rajeev Walia}

\subjclass[2000]{17B67}
 \keywords{Tensor factorization, Spin construction, Kac-Moody algebras}

 \email{rwalia@math.ucr.edu}
 \address{Department of Mathematics, Michigan State University,
   East Lansing, MI, 48824, USA}


\begin{abstract}
In this paper we discuss the ``Factorization phenomenon" which
occurs when a representation of a Lie algebra is restricted to a
subalgebra, and the result factors into a tensor product of smaller
representations of the subalgebra. We analyze this phenomenon for
symmetrizable Kac-Moody algebras (including finite-dimensional,
semi-simple Lie algebras). We present a few factorization results
for a general embedding of a symmetrizable Kac-Moody algebra into
another and provide an algebraic explanation for such a phenomenon
using Spin construction. We also give some application of these
results for semi-simple finite dimensional Lie algebras.

We extend the notion of Spin functor from finite-dimensional to
symmetrizable Kac-Moody algebras, which requires a very delicate
treatment. We introduce a certain category of orthogonal
$\g$-representations for which, surprisingly, the Spin functor gives
a $\g$-representation in Bernstein-Gelfand-Gelfand category $\O$.
Also, for an integrable representation $\Spin$ produces an
integrable representation. We give the formula for the character of
Spin representation for the above category and work out the
factorization results for an embedding of a finite dimensional
semi-simple Lie algebra into its untwisted affine Lie algebra.
Finally, we discuss classification of those representations for
which $\Spin$ is irreducible.
\end{abstract}
\maketitle

\section*{Introduction}
The {\it factorization phenomenon} occurs when a representation of a
Lie algebra $\gt$ is restricted to a subalgebra $\g$, and the result
factors into a tensor product of $\g$-representations:
$$V\sh\downarrow^\gt_\g\,\cong\, V_1\otimes V_2\otimes\cdots\;\;\;.$$
We will consider general embeddings of symmetrizable Kac-Moody algebras $\g\subset\gt$.

This phenomenon has been widely studied when $\gt$ is an affine Lie
algebra and $\g$ its underlying finite-dimensional subalgebra; see
\cite{FL1},\cite{FL2},\cite{KMN},\cite{HK},\cite{Ka} and \cite{OSS}.
In this case, Fourier and Littelman \cite{FL1} have shown that every
irreducible $\gt$-representation factors into a tensor product of
infinitely many $\g$-representations.  Their proof by character
computations is essentially combinatorial.  Our work aims toward an
algebraic framework in which factorization appears functorially and
in a more general context, treating finite and infinite dimensional
Lie algebras simultaneously.

We define a large class of representations which exhibit tensor
factorization. First we give some motivation for this class in terms
of the charaters of its representations. For now, we consider
embedding of one semi-simple finite dimensional Lie algebra into
another, but we will see later that the arguments also work for
embeddings of symmetrizable Kac-Moody algebras with some additional
structure. We fix some notation:
\begin{itemize}
\item $\g\subset\gt$, an embedding of semi-simple finite dimensional Lie algebras\,.
\item $\rho$ = half sum of all positive roots of $\g$\,.
\item $\W$ = the Weyl group of $\g$.
\item $A_\mu\;=\;\sum_{w\in\W}\text{sign}(w)e^{w(\mu)}$, the skew symmetrizer of $e^{\mu}$ with respect to $\W$.
\item $R^+$\;=\; the set of all positive roots of $\g$.
\item $V(\lambda) = $irreducible representation with highest weight $\lambda$\,.
\item $V{\downarrow}_\g^{\gt}=$ restriction of a $\gt$-representation $V$ to $\g$.
\end{itemize}
Also, for an object $a$ associated to $\g$, we use $\tilde{a}$ to
denote the corresponding object for $\gt$. For example, $\rhot$
denotes the half sum of all positive roots of $\gt$ and $V(\rhot)$
denotes the irreducible representation of $\gt$ with highest weight
$\rhot$.

Consider the character of $V(\rhot)$. By Weyl denominator identity:
$$
A_\rhot=e^\rhot\prod_{\alpha\in\Rt^+}(1-e^{-\alpha})
$$
and Weyl character formula:
$$
\Char V(\lambda)=\frac{A_{\lambda+\rhot}}{A_\rhot},
$$
we obtain:
$$
\Char V(\rhot)= e^\rhot\prod_{\alpha\in\Rt^+}(1+e^{-\alpha}).
$$
This multiplicative form of the character of $V(\rhot)$ suggests
that the $\gt$-representation $V(\rhot)$ when restricted to $\g$,
might factor into tensor product of $\g$-representations.

Now, without loss of generality we may assume that Cartan subalgebra
of $\g$ is contained in Cartan subalgebra of $\gt$ and positive
roots of $\gt$ restrict to positive roots of $\g$. Then the
restriction of the $\gt$-character $\Char V(\rhot)$ to $\g$ will in
fact factor as follows:
$$\begin{array}{cccc}
\Char(V(\rhot){\downarrow}^\gt_\g)&=&\left(e^\rho\prod\limits_{\alpha\in R^+}(1+e^{-\alpha\downarrow})\right)&\left(e^{(\rhot\downarrow-\rho)}\prod\limits_{\alpha\in\Rt^+\setminus R^+}(1+e^{-\alpha\downarrow})\right)\\
&=&\Char V(\rho)&\left(e^{(\rhot\downarrow-\rho)}\prod\limits_{\alpha\in\Rt^+\setminus R^+}(1+e^{-\alpha\downarrow})\right)
\end{array}$$
where $\downarrow$ denotes restriction from $\gt$ to $\g$ and $R^+$ is
any subset of $\Rt^+$ which on restriction to $\g$ forms the set of
all positive roots of $\g$. Now, if we can find a
$\g$-representation whose character is the second factor above, we
can conclude that for any embedding $\g\subset\gt$ of semi-simple
Lie algebras, the irreducible $\gt$-representation $V(\rhot)$, when
restricted to $\g$, always factors into a tensor product of at least
two $\g$-representations, one of them being $V(\rho)$.

The $\g$-representation whose character is the second factor above is
obtained using Panyushev's \cite{P} reduced Spin functor $\Spin_0$. We
will define $\Spin_0$ in \S\ref{Spin} (and briefly in \S\ref{Props}).
Basically, $\Spin$ for a given Lie-algebra $\g$ is a functor from the
category of all $\g$-representations which have a non-degenerate symmetric
bilinear form preserved by the action of $\g$ (called {\it orthogonal}
$\g$-representations) to the category of all $\g$-representations. By
reducing multiplicities in the resulting representation we obtain
$\Spin_0$ which has the remarkable property that:
$$
\Spin_0(V_1\oplus V_2)\cong\Spin_0(V_1)\otimes\Spin_0(V_2).
$$

It is a well known fact that for any semi-simple finite dimensional
Lie algebra $\g$, the representation $V(\rho)$ can be realized as
$\Spin_0$ of adjoint representation of $\g$ (which is indeed
orthogonal due to the invariant Killing form). Thus, for the Lie
algebra $\gt$,
$$
V(\rhot)=\Spin_0(\gt).
$$
When we restrict to $\g$, it turns out that $\Spin_0$ commutes with
the restriction according to : $$\Spin_0(\gt){\downarrow}\;\cong
2^r\Spin_0(\gt{\downarrow}),$$ where $r$ is the number of positive
roots of $\gt$ which restrict to zero. Now, $\gt\cong
\g\oplus\g^\bot$ as $\g$-representation, where $\bot$ denotes the
orthogonal complement with respect to the Killing form. Therefore,
$$
\begin{array}{ccl}
    V(\rhot){\downarrow}^\gt_\g&\cong& 2^r\Spin_0(\g\oplus\g^{\bot})\\
                    &\cong& 2^r[\Spin_0(\g)\;\;\otimes\;\;\Spin_0(\g^\bot)],
\end{array}
$$
by the property of $\Spin_0$ mentioned above. So,
$$
\begin{array}{ccl}
    V(\rhot){\downarrow}^\gt_\g&\cong& \Spin_0(\g)\;\;\otimes\;\;[2^r\Spin_0(\g^\bot)]\\
                    &\cong& V(\rho)\;\;\otimes\;\;[2^r\Spin_0(\g^\bot)],
\end{array}
$$
as $V(\rho)$ is isomorphic to $\Spin_0(\g)$. Hence, we get the
tensor factorization of the restricted $V(\rhot)$. This is the
content of Theorem \ref{t1} in \S\ref{FT} where it is extended to embedding
of symmetrizable Kac-Moody algebras with some additional structure.
The detailed proof is given later.

From this, using Weyl character formula, we can obtain a tensor
factorization of the $\gt$-representation $V(2\mut+\rhot)$ for {\it
any} dominant weight $\mut$, which forms the content of Theorem \ref{t2} in
\S\ref{FT}. In \S\ref{Props}, we state some important properties of
the $\Spin$ functor. We describe some consequences of the above
theorems for finite dimensional semi-simple Lie algebras and
untwisted affine Lie algebras in \S\ref{SpeCases} and \S\ref{Affine}
respectively. In \S\ref{CoP}, for a subclass of orthogonal
representations of untwisted affine Lie algebras, called affinized
representations, we classify those whose $\Spin_0$ is irreducible.
Following Panyushev \cite{P} these are called coprimary
representations.
\section{Main results}\label{MR}
\subsection{Background for symmetrizable Kac-Moody algebras}\label{Bkground}
An $n\times n$ matrix, $A=(a_{ij})$, is called a {\it generalized Cartan matrix} if:
\begin{enumerate}
\item $a_{ii}=2$ for all $i=1,2,\cdots,n.$
\item $a_{ij}$ is a non-positive integer for all $i\neq j$.
\item $a_{ij}=0$ implies $a_{ji}=0$ for all $i\neq j$.
\end{enumerate}
For any $n\times n$ matrix $A=(a_{ij})$ of rank $l$, we define {\it a realization of $A$} as a triple $(\h,\Pi,\Pi^\vee)$, where $\h$ is a complex vector space, $\Pi=\left\{\alpha_1,\alpha_2,\cdots,\alpha_n\right\}\subset\h^*$ and $\Pi^\vee=\left\{\alpha_1^\vee,\alpha_2^\vee,\cdots,\alpha_n^\vee\right\}\subset\h$ are indexed subsets in $\h^*$ and $\h$ respectively, satisfying the following three conditions:
\begin{enumerate}
\item Both sets $\Pi$ and $\Pi^\vee$ are linearly independent.
\item $\alpha_j(\alpha_i^\vee)=a_{ij}$ for all $i,j=1,2,\cdots,n$.
\item dim$(\h)=2n-l$.
\end{enumerate}
Two realizations $(\h,\Pi,\Pi^\vee)$ and $(\h_1,\Pi_1,\Pi_1^\vee)$ are called isomorphic if there exists a vector space isomophism $\phi:\h\longrightarrow\h_1$ such that $\phi(\Pi^\vee)=\Pi_1^\vee$ and $\phi^*(\Pi_1)=\Pi$. There exists a unique (up to isomorphism) realization of every $n\times n$ matrix. The realizations of two matrices $A$ and $B$ are isomorphic if $B$ can be obtained from $A$ by a permutution of the indexing set \cite[Proposition 1.1]{Ka}.

An $n\times n$ matrix $A$ is called {\it symmetrizable} if there exists an invertible diagonal matrix $D=\text{diag}(\epsilon_1,\epsilon_2,\cdots,\epsilon_n)$ and a symmetric matrix $B$ such that $A=DB$.

\begin{defn} ({\bf Symmetrizable Kac-Moody algebra})\; Let $A=(a_{ij})$ be a symmetrizable generalized Cartan matrix and let $(\h,\Pi,\Pi^\vee)$ be a realization of $A$. A symmetrizable Kac-Moody algebra, $\g$, associated to $A$ is defined as the Lie algebra on generators $X_{\pm i}$ ($i=1,\cdots,n$), all $H\in\h$ with the following defining relations:
\begin{enumerate}
\item $[H_1,H_2]=0$ \hspace{66pt}for all $H_1, H_2\in\h$.
\item $[H,X_{\pm i}]=\pm\alpha_i(H)X_{\pm i}$ \hspace{18pt}for all $i=1,\cdots,n$ and $H\in\h.$
\item $[X_i,X_{-j}]=\delta_{ij}\alpha_i^\vee$\hspace{47pt}for all $i,j=1,\cdots,n.$
\item $\text{ad}(X_{\pm i})^{1-a_{ij}}(X_{\pm j})=0$\hspace{22pt}for all $i, j=1,\cdots,n.$
\end{enumerate}
Here, $\text{ad}(X)(\cdot):=[X,\;\cdot\;]$ and $\h$ is called the Cartan subalgebra of $\g$.
\end{defn}
Let $\g$ be symmetrizable Kac-Moody algebra. We define a
non-degenerate symmetric bilinear form $(\;\cdot\;,\;\cdot\;)$ on
$\h$ which can be extended (See \cite[Thm 2.2]{Ka}) to a
non-degenerate symmetric bilinear form on whole of $\g$ such that
$(\;\cdot\;,\;\cdot\;)$ is preserved by the adjoint action of $\g$,
that is:
$$([X,Y],Z)+(Y,[X,Z])=0$$
for all $X,Y,Z\in\g$. Let $A$ be a symmetrizable generalized Cartan matrix with a fixed decomposition $A=DB$ (See the definition of a symmetrizable matrix above). Let $\h^{'}:=\bigoplus_{i=1}^n\C\alpha_i^\vee$. Fix a complementary space $\h^{''}$ to $\h^{'}$ in $\h$ and define:
$$(\alpha_i^\vee,H)=\alpha_i(H)\epsilon_i\;\;\;\;\forall\;\;H\in\h;$$
$$(H_1,H_2)=0\;\;\;\;\forall\;\;H_1,H_2\in\h^{''}.$$

\subsection{Factorization Theorems}\label{FT}

We now state our main factorization results using $\Spin$
construction. We consider embeddings, $\g\subset\gt$, of
symmetrizable Kac-Moody algebras. Our analysis deals with finite as
well as infinite dimensional representations.  For example, we
consider infinite dimensional irreducible representations of an
affine Lie algebra $\gt$ with finite dimensional weight spaces. If
we restrict such a representation to a finite dimensional Lie
algebra $\g$, the $\g$-weight spaces no longer remain finite
dimensional. To avoid this, we enlarge the Cartan subalgebra of $\g$ by 
one dimension by augmenting an element $d$ from Cartan subalgebra of $\gt$
so that for certain class of $\gt$-representations, the restricted
representation to ${\g}{\oplus}{\C d}$ has finite-dimensional weight spaces. 
This gives rise to the following notions: an {\it
augmented symmetrizable Kac-Moody algebra}, a {\it $d$-embedding} of
such algebras, say $\g\subset\gt$, and {\it $d$-finite} representations so that any
$d$-finite $\gt$-representation, when restricted to $\g$, has
finite dimensional weight spaces.

\begin{defn} {(\bf Augmented symmetrizable Kac-Moody algebra $\g$)} A Lie algebra $\g$ is called an augmented symmetrizable Kac-Moody algebra if $\g$ has a certain distinguished element $d$ in the Cartan subalgebra $\h$ of $\g$ such that either:
\begin{itemize}
\item  $\g$ itself is a symmetrizable Kac-Moody algebra and $\alpha_i(d)\in\Z_{>0}$ for all  $\alpha_i\in\Pi$,
\item  or $\g=\g_1\oplus\C d$ where:
\begin{itemize}
\item $\g_1$ is a symmetrizable Kac-Moody algebra.
\item for each root vector $X_{\pm\alpha}$ of $\g_1$,
$[d,X_{\pm\alpha}]=\pm c_\alpha X_{\pm\alpha}$, for some {\it positive} integer $c_\alpha$. 
\end{itemize}
\end{itemize}
\end{defn}

\begin{remark}When $\g=\g_1\oplus\C d$, we can extend the action of the root $\alpha$ of $\g_1$ to $(\h_1\oplus\C d)$ by defining $\alpha(d):=c_\alpha$. Thus $\h:=\h_1\oplus\C d$ is the Cartan subalgebra of $\g$.
\end{remark}
Example: Let $\g_1\cong\sl_2\C:=\C H_\alpha\oplus\C X_\alpha\oplus\C X_{-\alpha}$ with usual bracket relations. Define $[d,H_\alpha]:=0$ and $[d,X_{\pm\alpha}]:=\pm X_{\pm\alpha}$ , so that $\alpha(d):=1$. Then, $\g:=\g_1\oplus\C d$ is an augmented symmetrizable Lie algebra.

\begin{defn} {\bf ($d$-embedding)}\;An embedding $\g\subset\gt$ of augmented symmetrizable Kac-Moody algebras with distinguished element $d$ and $\dt$ and Cartan subalgebras $\h$ and $\hti$ respectively, is called a $d$-embedding
if\;:
\begin{itemize}
\item $d=\dt$,
\item $\h\subset\hti$ and
\item positive roots of $\g$ are restrictions of positive roots of $\gt$.
\end{itemize}
\end{defn}
Let $\g$ be an augmented symmetrizable Kac-Moody algebra with distinguished element $d$ in the Cartan subalgebra $\h$ and weight lattice $\Pc$. For a $\g$-representation $V$ and $\Lambda\in\Pc$, let $V^{(\Lambda)}:=\left\{v\in V:H(v)=\Lambda(H)v \hspace{.5em}\forall\hspace{.5em} H\in\h\right\}$ denote the corresponding weight space of $V$. $\Lambda$ is called a {\it weight of $V$} if $V^{(\Lambda)}\neq\left\{0\right\}$.

For the distinguished element $d$ in the Cartan subalgebra of $\g$, we say a $\g$-representation $V$ is {\it $d$-finite} if\;\;:
\begin{itemize}
\item $\Lambda(d)\in\Z\setminus\{0\}$ for all non-zero weights $\Lambda$ of $V$ and
\item $\bigoplus_{\Lambda(d)=k}V^{(\Lambda)}$ is finite-dimensional for each $k\in\Z$\;.
\end{itemize}

As we mentioned in the Introduction, the input for the $\Spin$
functor is an orthogonal representation which we defined for
semi-simple finite dimensional Lie algebras. The same definition
extends to the augmented symmetrizable Kac-Moody algebra too. For an
augmented symmetrizable Kac-Moody algebra $\g$, a
$\g$-representation $V$ is called {\it orthogonal} if there exists a
non-degenerate symmetric bilinear form $Q$ on $V$, invariant under
the action of $\g$, that is, $Q(Xu,v)+Q(u,Xv)=0$ for all $u,v\;\in
V$ and $X\in\g$. For example, the action of a symmetrizable
Kac-Moody algebra on itself by brackets, called adjoint
representation,\;{\it is} orthogonal due to the invariant bilinear
form (see \S\ref{Bkground}).

Next we define the adjoint representation of an augmented symmetrizable Kac-Moody algebra in such a way that it is orthogonal, so that we can apply the $\Spin$ functor to it (see the Introduction). Adjoint representation for an augmented symmetrizable Kac-Moody algebra $\g$ with distinguished element $d$ is already defined if $\g$ is itself a symmetrizable Kac-Moody algebra. So, let $\g=\g_1\oplus\C d$. In this case, the action of $\g$ on $\g_1$ by brackets is defined as the {\it adjoint representation} of $\g$. We can show that the action of the distinguished element $d$ preserves the bilinear form on $\g_1$ and thus $\g_1$ is orthogonal as a $\g$-representation.
\begin{remark} It is easy to show that for a $d$-embedding, $\g\subset\gt$, the adjoint representation of $\gt$ is $d$-finite and orthogonal both as a $\gt$-representation and a $\g$-representation.
\end{remark}
Theorems \ref{t1} and \ref{t2} (given below) describe a class of representations which exhibit the factorization phenomenon (see the Introduction). We will follow the notations used in the Introduction except that $\g\subset\gt$ will denote a $d$-embedding of two augmented symmetrizable Kac-Moody algebras and $\rho$ and $\rhot$ will denote the sum of all fundamental weights of $\g$ and $\gt$ respectively.

\begin{thm} \label{t1}
{\it For a $d$-embedding, $\g\subset\gt$, of augmented symmetrizable Kac-Moody algebras, suppose the adjoint representation of $\gt$ decomposes into orthogonal $\g$-representations as: $\gt\cong\g\oplus\p_1\oplus\p_2\oplus\cdots$\,.
Then the $\gt$-representation  $V(\rhot)$, when restricted from $\gt$ to $\g$,
factors into a tensor product of $\g$-representations as:
$$V(\rhot){\downarrow}^\gt_\g\ \cong\, V(\rho)\otimes\;W_1\otimes W_2\otimes\cdots$$
with $W_j=\Spin_0(\p_j)$\,,
where $\Spin_0$ is the reduced Spin functor defined in \S\ref{Spin}.
}
\end{thm}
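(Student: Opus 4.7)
My plan is to mimic the algebraic argument sketched in the Introduction, with the orthogonal decomposition $\gt\cong\g\oplus\p_1\oplus\p_2\oplus\cdots$ playing the role that $\gt\cong\g\oplus\g^\perp$ plays in the semisimple finite-dimensional setting. Specifically, I aim to establish the chain
\begin{align*}
V(\rhot){\downarrow}^\gt_\g\;&\cong\;\Spin_0(\gt){\downarrow}^\gt_\g\\
&\cong\;\Spin_0\bigl(\gt{\downarrow}^\gt_\g\bigr)\\
&\cong\;\Spin_0(\g\oplus\p_1\oplus\p_2\oplus\cdots)\\
&\cong\;\Spin_0(\g)\otimes\Spin_0(\p_1)\otimes\Spin_0(\p_2)\otimes\cdots\\
&\cong\;V(\rho)\otimes W_1\otimes W_2\otimes\cdots,
\end{align*}
using, in order: (i) a Panyushev-type identification of $V(\rhot)$ with the reduced Spin of the adjoint representation of $\gt$; (ii) compatibility of $\Spin_0$ with restriction under a $d$-embedding; (iii) the orthogonal decomposition hypothesized in the theorem; (iv) the exponential property $\Spin_0(V\oplus V')\cong\Spin_0(V)\otimes\Spin_0(V')$ of the reduced Spin functor; and (v) the identity $V(\rho)\cong\Spin_0(\g)$, which is just step (i) applied to $\g$ instead of $\gt$.

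For step (i), I would combine the Weyl--Kac denominator identity with the Weyl--Kac character formula. Together, after the same cancellation used in the classical case but with the additional bookkeeping of root multiplicities, they yield
\begin{equation*}
\Char V(\rhot)\;=\;e^{\rhot}\prod_{\alpha\in\Rt^+}(1+e^{-\alpha})^{\mathrm{mult}(\alpha)},
\end{equation*}
which is precisely the character of $\Spin_0$ applied to the adjoint representation, once the $d$-grading on $\gt$ is used to polarize its weights into a ``positive'' half---this is where the condition $\alpha_i(d)\in\Z_{>0}$ built into the notion of augmented symmetrizable Kac-Moody algebra is essential. Matching highest-weight data and invoking irreducibility of $V(\rhot)$ yields the identification. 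Step (iv) in the infinite-sum case is interpreted via the Clifford-algebra construction of $\Spin_0$ given in \S\ref{Spin}: the Clifford algebra of an orthogonal direct sum is the graded tensor product of the Clifford algebras of the summands, and the $d$-finiteness of each $\p_j$ ensures that the resulting infinite tensor product has finite-dimensional weight spaces in each $d$-degree, hence lies in the appropriate category of $\g$-representations in which $\Spin_0$ takes values.

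The main obstacle is step (ii), the commutativity of $\Spin_0$ with restriction. In the finite-dimensional calculation outlined in the Introduction the identity $\Spin_0(\gt){\downarrow}\cong\Spin_0(\gt{\downarrow})$ holds only up to a correction factor $2^r$, where $r$ counts positive roots of $\gt$ that restrict to the zero functional on the smaller Cartan subalgebra. The $d$-embedding hypothesis is designed precisely to eliminate this anomaly: since $d=\dt$ lies in $\h\subseteq\hti$ and $\alpha(d)=\alpha(\dt)>0$ for every positive root $\alpha$ of $\gt$, no such root can restrict to the zero functional on $\h$, so $r=0$ and the restriction formula becomes an honest isomorphism of $\g$-representations. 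Verifying this carefully---and checking that the argument is correctly captured by the reduced functor $\Spin_0$, which has already quotiented out the Cartan contribution---is where most of the work lies; the remaining pieces are essentially formal manipulations using the properties of $\Spin_0$ established in \S\ref{Spin}.
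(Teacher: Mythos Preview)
Your proposal is correct and follows essentially the same route as the paper: the paper's proof of Theorem~\ref{t1} is exactly the chain of isomorphisms you wrote down, invoking Proposition~\ref{p1}(7) for steps (i) and (v), Proposition~\ref{p1}(4) for step (iv), and arguing that $\Spin_0$ commutes with restriction because under a $d$-embedding no nonzero $\gt$-weight of a $d$-finite representation can restrict to zero on $\h$ (since $d=\dt\in\h$ and $\Lambda(d)\neq 0$), so the zero-weight-space dimension is unchanged and the $2^{\lfloor m_0/2\rfloor}$ factors in $\Spin=\Spin_0^{\oplus 2^{\lfloor m_0/2\rfloor}}$ match on both sides. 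The only minor deviation is that the paper establishes $\Spin_0(\g)\cong V(\rho)$ (Proposition~\ref{p1}(7)) not by the Weyl--Kac denominator identity you sketch, but by restricting to the rank-one subalgebras $\dd_i=\mathfrak{s}_i\oplus\C d$ and computing each coefficient $c_i$ of $\Lambda$ directly; either argument works.
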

In the finite-dimensional case, the Theorem is closely related to the results of Kostant \cite{K1,K2}.

Theorem \ref{t1} leads to  a large class of representations  exhibiting tensor factorization.

\begin{thm} \label{t2}
{\it Let $\g\subset\gt$ be as in Theorem \ref{t1}, and let $\mut$ be a dominant weight of $\gt$. Then the $\gt$ representation $V(2\mut+\rhot)$, when restricted to $\g$, factors into a tensor product of  $\g$-representations which include the same $W_j$ as in Theorem \ref{t1}.
The other factor can be expressed in terms of the irreducible decomposition of the restricted $V(\mut)$.

That is, if we let:
$$V(\mut){\downarrow}^\gt_\g \,\cong\,\bigoplus_{i}V(\mu_{i})\,,$$

\noindent then:
$$V(2\mut{+}\rhot){\downarrow}^\gt_\g\,\cong\,\left(\bigoplus_{i}V(2\mu_{i}{+}\rho)\right)\otimes\;
W_1\otimes W_2\otimes\cdots\;.
$$
}
\end{thm}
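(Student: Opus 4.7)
The plan is to reduce Theorem \ref{t2} to Theorem \ref{t1} via a character identity expressing $\Char V(2\lambda+\rho)$ as $\Char V(\rho)$ times a ``doubled'' version of $\Char V(\lambda)$, and then to upgrade the resulting character equality to an isomorphism of $\g$-representations inside the $d$-finite integrable category.

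First I would establish the key identity on either side. Let $\psi$ denote the ring endomorphism of the appropriate completion of $\Z[\Pc]$ defined on generators by $\psi(e^\lambda) = e^{2\lambda}$. Since
$$A_{2\lambda+2\rho} = \sum_w \text{sign}(w)\, e^{2w(\lambda+\rho)} = \psi(A_{\lambda+\rho}),$$
the Weyl--Kac formula $A_{\lambda+\rho} = A_\rho \cdot \Char V(\lambda)$ yields
$$\Char V(2\lambda+\rho) = \frac{\psi(A_{\lambda+\rho})}{A_\rho} = \frac{\psi(A_\rho)}{A_\rho}\cdot\psi(\Char V(\lambda)) = \frac{A_{2\rho}}{A_\rho}\cdot\psi(\Char V(\lambda)) = \Char V(\rho)\cdot\psi(\Char V(\lambda)),$$
and the analogous identity holds on $\gt$. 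Note that $\Char V(\rho)$ is a unit in the completion (apply $\psi$ to the Weyl--Kac denominator identity to compute $A_{2\rho}/A_\rho$), so it may be freely cancelled below.

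Next, applying the identity on $\gt$ and restricting to $\g$, and using that restriction is a ring homomorphism on characters which commutes with $\psi$, gives
$$\Char V(2\mut+\rhot){\downarrow}^\gt_\g = \Char V(\rhot){\downarrow}^\gt_\g\cdot\psi\bigl(\Char V(\mut){\downarrow}^\gt_\g\bigr).$$
Theorem \ref{t1} rewrites the first factor as $\Char V(\rho)\cdot\prod_j\Char W_j$. For the second, the hypothesis $V(\mut){\downarrow}^\gt_\g\cong\bigoplus_i V(\mu_i)$ and the identity applied on $\g$ to each summand (each $\mu_i$ is dominant integral, hence so is $2\mu_i+\rho$) give $\psi(\Char V(\mu_i)) = \Char V(2\mu_i+\rho)/\Char V(\rho)$. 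Substituting and cancelling the extra factor of $\Char V(\rho)$ produces
$$\Char V(2\mut+\rhot){\downarrow}^\gt_\g = \Bigl(\sum_i \Char V(2\mu_i+\rho)\Bigr)\cdot\prod_j \Char W_j,$$
which is precisely the character of the asserted tensor product.

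The final, and hardest, step is promoting this character identity to an actual isomorphism of $\g$-modules. Since $V(2\mut+\rhot)$ is integrable over $\gt$, its restriction is integrable over $\g$; combined with $d$-finiteness this places both sides in a category whose objects are completely reducible and determined up to isomorphism by their characters. The main obstacle lies in verifying rigorously that both sides belong to this category: one must show that the possibly infinite tensor product $\bigotimes_j W_j$ is well-defined as a $\g$-representation with finite-dimensional $d$-graded pieces (so that $\prod_j \Char W_j$ really computes its character in the completed group algebra), and that each $W_j = \Spin_0(\p_j)$ lies in the target category. This is where $d$-finiteness of the adjoint representation of $\gt$ and the careful extension of $\Spin_0$ to the Kac--Moody setting in \S\ref{Spin} are essential.
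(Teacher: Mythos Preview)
Your proposal is correct and follows essentially the same route as the paper: both establish the identity $\Char V(2\lambda+\rho)=\Char V(\rho)\cdot\psi(\Char V(\lambda))$ (the paper writes $\psi(\chi)$ as $\chi^{(2)}$), apply it on $\gt$, restrict, invoke Theorem~\ref{t1}, and then apply the same identity on $\g$ to each $\mu_i$. The paper's proof actually stops at the character equality and does not spell out the passage to an isomorphism; your final paragraph is more careful than the paper on this point, and the justification you gesture at (integrability plus membership in $\Ow$ determines the isomorphism class by character, as noted in the Remark after the definition of $\Ow$) is exactly what is needed.
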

We will prove these theorems in \S\ref{Proofs}.
\subsection{Basic properties of the $\Spin$ functor}\label{Props}
We now describe the basic properties of $\Spin$ functor, reserving the more technical discussion for \S\ref{Spin}. In the finite-dimensional case the construction is quite simple, and was examined by Panyushev \cite{P}. For an $n$-dimensional vector space $V$ with a non-degenerate symmetric bilinear form, recall that the orthogonal Lie algebra $\sov$ has a representation on the $2^{\lfloor n/2\rfloor}$-dimensional space $\Spin(V):=\wedge^\bullet V^+$\,, the total wedge space or exterior algebra of a maximal isotropic subspace $V^+\subset V$.

Let $\g$ be a semi-simple, finite dimensional Lie algebra. For an orthogonal $\g$-representation $V$, $\g$ acts by orthogonal matrices: that is, through an embedding $\g\subset\sov$\,. Restricting the action of $\sov$ makes $\Spin(V)$ a representation of $\g$.
If the zero weight space of $V$ has dimension $r$, it turns out that $\Spin(V)$ can be decomposed as the direct sum of $2^{\left\lfloor r/2\right\rfloor}$ copies of a smaller representation, which we call $\Spin_0(V)$.

Now let $\g$ be an augmented symmetrizable Kac-Moody algebra. In \S\ref{Spin}, we will define the $\g$-representation $\Spin(V)=\wedge^\bullet V^+$ for $V$ in the category of all $d$-finite and orthogonal (possibly infinite-dimensional) $\g$-representations. We will prove that the output, $\Spin(V)$, will be a $d$-finite $\g$-representation in the category $\Ow$ (defined below). Category $\Ow$ contains the Bernstein-Gelfand-Gelfand category $\O$ and has similar properties. Further if the input representation $V$ is root finite (defined below) then we will prove, $\Spin(V)$ belongs to $\O$.  If the zero weight space of $V$ is even, $\Spin(V)$ decomposes into direct sum of  $\g$-representation which we call {\it half-Spin representations} $\wedge^{\text{even}} V^+$ and $\wedge^{\text{odd}} V^+$. We also denote these by $\Spin^{\text{even}}(V)$ and $\Spin^{\text{odd}}(V)$.

We define a partial ordering\;$\leq$ called root order, on the the weight lattice $\Pc$ of $\g$ as follows:
We say, $\beta\leq\gamma$ in the {\bf root order} if $\gamma-\beta=\sum_{\alpha}c_\alpha\alpha$ where $\alpha$ is a simple positive root of $\g$ and $c_\alpha\in\Z_{\geq 0}$ for all $\alpha$.

For an augmented symmetrizable Kac-Moody algebra $\g$ and a
$\g$-representation $V$ define:
$$M_V:=\text{Set of all weights of } V \text{\;maximal in the root order}.$$
\begin{defn} {\bf (Category $\Ow$ of $\g$-representations)}\; $\Ow$ consists of all $\g$-representations $V$ such that for each weight $\beta$ of $V$:
\begin{enumerate}
\item  the weight space $V^{(\beta)}$ is finite dimensional and
\item  there exists $\lambda\in M_V$ such that $\beta\leq\lambda$\;.
\end{enumerate}
\end{defn}

\begin{remark}
 The morphisms in $\Ow$ are $\g$-representations homomorphisms. Following fact can be deduced, using \cite[Thm 10.7]{K}, that for a representation $V$ in $\Ow$\;which is integrable (meaning simple root vectors act locally nilpotently), the isomorphism class of $V$ is determined by its character.
\end{remark}
The well-known {\it Bernstein-Gelfand-Gelfand category $\O$} of $\g$-representations can be defined as a subcategory of $\Ow$:
$$\O:=\left\{V\in\Ow\;:\; M_V \text{\;is a finite set}\right\}\;.$$
It is worth noting that $\O$ can be defined to consist of $\g$-representations $V$ such that $V$ has finite-dimensional weight spaces and there exists a finite set $F$, a subset of weight lattice $\Pc$ of $\g$, so that for each weight $\beta$ of $V$ we can find $\lambda\in F$ with $\beta\leq\lambda$.

Let $\left\{\alpha_i\right\}_{i=1}^n$ denote the simple positive
roots of an augmented symmetrizable Kac-Moody algebra $\g$ with
distinguished element $d$. Let $\h^*$ be the dual Cartan subalgebra
of $\g$. Define the root cone:
\[ C:=\left\{\sum_{i=1}^n a_i\alpha_i\in\h^*:a_i\in\mathbb{R}_{\geq 0}\;\forall\; i\;
\text{or}\; a_i\in\mathbb{R}_{\leq 0}\;\forall\; i\right\}. \]

\begin{defn} {\bf Root finite $\g$-representation}\; We say a
$\g$-representation $V$ is root-finite if for every weight $\Lambda$
of $V$, $V^{(\Lambda)}$ is finite dimensional,
$\Lambda(d)\in\Z\setminus\left\{0\right\}$ for $\Lambda\neq 0$ and there are
only finitely many weights of $V$ in $\h^*\setminus\;C$.
\end{defn}

Propositions \ref{p1}  and \ref{p2} give some basic properties of $\Spin(V)$.
\begin{propn}\label{p1}
{\it Let $\g$ be an augmented symmetrizable Kac-Moody algebra with distinguished element $d$. Let $V$,\;$V_1$ and $V_2$ be $d$-finite and orthogonal $\g$-representations.
\begin{enumerate}
\item $\Spin(V)$ is $d$-finite and belongs to $\Ow$.
\item $V$ is integrable $\Rightarrow\;\Spin(V)$ is integrable and $\Spin(V)\cong W^{\oplus r}$ for some $\g$-representation $W$ called $\Spin_0(V)$. Here $r=\left\lfloor m_0/2\right\rfloor$ where $m_0$ is the dimension of the zero weight space, $V^{(0)}$ of $V$.
\item Let $m_i:=\text{dim}(V^{(0)}_i)$ for $i=1,2$.\\
If at least one of $m_1$ or $m_2$ is even, then:
$$\Spin(V_1\oplus V_2)\;\cong\;\Spin(V_1)\otimes\Spin(V_2).$$
If both $m_1$ and $m_2$ are odd, then:
$$\Spin^{\text{even}}(V_1\oplus V_2)\;\cong\;\Spin(V_1)\otimes\Spin(V_2)\cong\Spin^{\text{odd}}(V_1\oplus V_2).$$
and $$\Spin(V_1\oplus V_2)\;\cong\;(\;\Spin(V_1)\otimes\Spin(V_2)\;)^{\oplus 2}.$$
\item If $V_1$ and \;$V_2$ are integrable then
$$\Spin_0(V_1\oplus V_2)\;\cong\;\Spin_0(V_1)\otimes\Spin_0(V_2).$$
\item $W$ is root-finite $\Rightarrow$\; $W$ is $d$-finite.
\item $V \text{\;is root-finite}\Leftrightarrow\Spin(V)\in\O$
\item For adjoint representation $\g$,
$$\Spin_0(\g)\cong V(\rho)\,.$$
\end{enumerate}}
\end{propn}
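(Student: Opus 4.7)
The plan is to compute the character of $\Spin(\g)$ directly from the $\wedge^\bullet V^+$ construction of \S\ref{Spin}, simplify it into the Weyl-Kac character of $V(\rho)$ (up to the multiplicity factor recorded in part (2) of this Proposition), and conclude via the uniqueness-up-to-character principle for integrable objects of $\Ow$ stated in the Remark after the definition of $\Ow$.

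First, fix an orthogonal decomposition of the adjoint representation. Writing $\g = \h \oplus \bigoplus_{\alpha}\g_\alpha$ and choosing a maximal isotropic subspace $\h^+\subset\h$ with respect to the bilinear form of \S\ref{Bkground}, the subspace $V^+ := \h^+ \oplus \bigoplus_{\alpha > 0}\g_\alpha$ is maximal isotropic in $\g$ (the positive-root part is isotropic by $\g$-invariance of the form, which forces $(\g_\alpha,\g_\beta)=0$ unless $\alpha+\beta=0$, and a dimension count confirms maximality). Applying the Spin construction with this polarization and reading off weights from $V^+$, the character works out to
$$\Char\Spin(\g) \;=\; 2^{\lfloor m_0/2\rfloor}\prod_{\alpha > 0}\bigl(e^{\alpha/2}+e^{-\alpha/2}\bigr)^{\text{mult}(\alpha)},$$
where $m_0=\dim\h$ is the multiplicity of the zero weight in the adjoint representation.

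The next step is to identify this product with $\Char V(\rho)$. By the Weyl-Kac character formula, $\Char V(\rho) = A_{2\rho}/A_\rho$, and the Weyl-Kac denominator identity gives $A_\rho = e^\rho\prod_{\alpha>0}(1-e^{-\alpha})^{\text{mult}(\alpha)}$. Applying the ring substitution $e^\beta\mapsto e^{2\beta}$ to the denominator identity (valid since $w(2\rho)=2w(\rho)$ for $w\in\W$) yields $A_{2\rho} = e^{2\rho}\prod_{\alpha>0}(1-e^{-2\alpha})^{\text{mult}(\alpha)}$. The factorization $1-e^{-2\alpha}=(1-e^{-\alpha})(1+e^{-\alpha})$ collapses $A_{2\rho}/A_\rho$ to $e^\rho\prod(1+e^{-\alpha})^{\text{mult}(\alpha)}$, which equals the product above (pulling an $e^{-\alpha/2}$ out of each factor absorbs $e^\rho$). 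Hence $\Char\Spin(\g)=2^{\lfloor m_0/2\rfloor}\Char V(\rho)$.

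To finish, note that the adjoint representation is integrable (simple root vectors act locally nilpotently on $\g$), so part (2) of the Proposition gives $\Spin(\g)\cong \Spin_0(\g)^{\oplus 2^{\lfloor m_0/2\rfloor}}$, whence $\Char\Spin_0(\g)=\Char V(\rho)$. Both modules are integrable objects of $\Ow$ (the dominant integral weight $\rho$ produces an integrable highest-weight module, while integrability of $\Spin_0(\g)$ is given by part (2) again), so the cited Remark yields $\Spin_0(\g)\cong V(\rho)$. The delicate step I anticipate is the first one: verifying that in the Kac-Moody setting the semi-infinite Spin construction of \S\ref{Spin} produces exactly the clean product character displayed above, with the correct interaction between the $\h^+$ contribution and the zero-weight space; the remainder is a straightforward denominator manipulation.
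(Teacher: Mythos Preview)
Your overall strategy is sound---compute the character of $\Spin_0(\g)$, match it with $\Char V(\rho)$, and invoke the character-determines-isomorphism-class principle for integrable objects of $\Ow$---and this is exactly what the paper does. The denominator-identity manipulation showing $e^\rho\prod_{\alpha>0}(1+e^{-\alpha})^{\text{mult}(\alpha)}=\Char V(\rho)$ is correct and matches the paper's implicit use of it.

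The gap is the step you yourself flag as ``delicate.'' The displayed formula
$$\Char\Spin(\g)=2^{\lfloor m_0/2\rfloor}\prod_{\alpha>0}(e^{\alpha/2}+e^{-\alpha/2})^{\text{mult}(\alpha)}$$
is not well-defined in the Kac-Moody setting: there are infinitely many positive roots, so the product of symmetric factors cannot be expanded (the exponents become infinite sums), and ``pulling out $e^{\alpha/2}$ from each factor'' yields $e^{\sum_{\alpha>0}\alpha/2}$, which diverges. What the semi-infinite construction of \S\ref{Spin} actually produces is $\Char\Spin(\g)=e^{\Lambda}\prod_{\alpha>0}(1+e^{-\alpha})^{\text{mult}(\alpha)}$ for a certain weight $\Lambda=\Lt_0\circ\sigt$ coming from the lift in Lemma~\ref{l1}; this $\Lambda$ is \emph{not} a priori equal to $\rho$, and showing $\Lambda=\rho$ is the entire content of the proof. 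The paper does this by writing $\Lambda=\sum_i c_i\Lambda_i$, restricting the adjoint representation to each $\dd_i=\sm_i\oplus\C d$ with $\sm_i\cong\sl_2(\C)$, decomposing into finite-dimensional orthogonal $\dd_i$-irreducibles $V_k$, and observing that the half-sum of $d$-positive weights of $V_k$ pairs to $1$ with $\alpha_i^\vee$ exactly when the highest weight vector of $V_k$ lies in $\g^{(\alpha_i)}$ (and to $0$ otherwise), giving $c_i=1$. Your argument supplies everything except this computation, so as written it is incomplete precisely at the point where the real work lies.
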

Let \;$$\text{\Or} = \text{Category of all}\; d\text{-finite and orthogonal}\; \g\text{-representations}\;,$$
$$\text{\OR} =\text{\;Category of all root-finite and orthogonal\;} \g\text{-representations}.$$ Then, by Proposition \ref{p1}(5), $$\text{\OR}\subset\text{\Or}$$ and by Proposition \ref{p1}(1) and \ref{p1}(6), $\Spin(V)$ is a functor from the category \Or to the category $\Ow$ and also from category \OR to category $\O$. Thus,
$$
\begin{array}{ccl}
\text{\Or}&\stackrel{\Spin}{\longrightarrow}&\;\;\Ow\\
\cup\;\;      &                                 &\;\;\cup\\[-.2em]
\text{\OR}&\stackrel{\Spin}{\longrightarrow}&\;\;\O
\end{array}
$$
The following Proposition gives the character of $\Spin_0(V)$ in terms of the character of $V$.

\begin{propn} \label{p2}
{\it Let $V$ be an integrable $\g$-representation in \Or. Let $m_{\beta}$ be the multiplicity of a weight $\beta$ of $V$ so that the character of $V$ can be written as:
$$\begin{array} {rcl}
    \Char V&=&\sum\limits_{\beta(d)>0}m_{\beta}(e^{\beta}+e^{-\beta})\ +\ m_0\,.
\end{array}$$
Then the $\g$-representation $\Spin_0(V)$ has the character:
$$\Char\Spin_0(V)
\ =e^{\Lambda}
\mathop{\prod}_{\beta(d)>0} (1\sh+e^{-\beta})^{m_{\beta}}\,.
$$
Here $\Lambda:=\sum_{i=1}^n c_{i}\Lambda_i$, where
$\left\{\Lambda_i\right\}_{i=1}^n$ are the fundamental weights and
the coefficient $c_i$ is defined as follows:
$$c_i:=\sum\frac{1}{2}m_\beta\beta(\alpha_i^\vee),$$
where the sum is over all weights $\beta$ of $V$ such that $\beta(d)>0$ and $s_i(\beta)(d)<0$. Here $s_i$ denotes the reflection in the plane perpendicular to the simple root $\alpha_i$.
Because of the $d$-finiteness of $V$, $c_i$ has finitely many nonzero terms .}
\end{propn}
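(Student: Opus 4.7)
The plan is to compute $\Char\Spin(V)$ directly from the Clifford-algebra realization of $\Spin$ (\S\ref{Spin}), divide by the multiplicity factor supplied by Proposition \ref{p1}(2), and re-express the resulting prefactor in the fundamental-weight basis. All formal products below are well-defined by $d$-finiteness.

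First I would choose a maximal isotropic $V^+\subset V$ adapted to the $d$-grading:
$$V^+\ :=\ \bigoplus_{\beta(d)>0}V^{(\beta)}\ \oplus\ V_0^+,$$
where $V_0^+$ is a maximal isotropic subspace of the finite-dimensional zero-weight space $V^{(0)}$; this is $Q$-isotropic because $Q$ pairs $V^{(\beta)}$ only with $V^{(-\beta)}$. The standard Clifford identification $\wedge^2V\hookrightarrow\mathrm{Cl}(V)$ sends a dual pair $v\wedge v^*\in\so(V)$ (with $v\in V^+$, $Q(v,v^*)=1$) to $\tfrac{1}{2}(vv^*-v^*v)=vv^*-\tfrac{1}{2}$, and this $-\tfrac{1}{2}$ correction introduces a uniform shift by $-\tfrac{1}{2}\sum_j\beta_j$ in the $\h$-eigenvalues of the pure wedges $v_{j_1}\wedge\cdots\wedge v_{j_k}$. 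This is the classical $\rho_V$-shift, and it gives the spin-character formula
$$\Char\Spin(V)\ =\ \prod_{j}\bigl(e^{\beta_j/2}+e^{-\beta_j/2}\bigr)\ =\ 2^{\lfloor m_0/2\rfloor}\prod_{\beta(d)>0}\bigl(e^{\beta/2}+e^{-\beta/2}\bigr)^{m_\beta},$$
the first product running over a weight basis $\{v_j\}$ of $V^+$ with weights $\{\beta_j\}$.

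Dividing by the multiplicity $2^{\lfloor m_0/2\rfloor}$ supplied by Proposition \ref{p1}(2) and pulling $e^{\beta/2}$ out of each remaining factor yields
$$\Char\Spin_0(V)\ =\ e^{\Lambda_0}\prod_{\beta(d)>0}(1+e^{-\beta})^{m_\beta},\qquad \Lambda_0\ :=\ \tfrac{1}{2}\sum_{\beta(d)>0}m_\beta\beta,$$
with $\Lambda_0$ interpreted through its values on the coroots. To identify $\Lambda_0$ with $\Lambda$ I must check $\Lambda_0(\alpha_i^\vee)=c_i$ for each $i$. Since $V$ is integrable, $m_{s_i(\beta)}=m_\beta$; pairing $\beta$ with $s_i(\beta)$ whenever both lie in $\{\gamma:\gamma(d)>0\}$ cancels their joint contribution because $s_i(\beta)(\alpha_i^\vee)=-\beta(\alpha_i^\vee)$. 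The borderline case $s_i(\beta)(d)=0$ would force $s_i(\beta)=0$ by $d$-finiteness, hence $\beta=\beta(\alpha_i^\vee)\alpha_i$ and $s_i(\beta)=-\beta$, so that $s_i(\beta)(d)<0$, a contradiction; thus this case is empty. Only the $\beta$ with $\beta(d)>0$ and $s_i(\beta)(d)<0$ survive in the sum for $\Lambda_0(\alpha_i^\vee)$, and one reads off $\Lambda_0(\alpha_i^\vee)=c_i$.

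The main technical obstacle, I expect, is the finiteness claim for each $c_i$. The inequality $s_i(\beta)(d)<0$ combined with $\beta(d)>0$ forces $\beta(\alpha_i^\vee)>\beta(d)/\alpha_i(d)\geq 1$; together with the $W$-invariance of weight multiplicities for an integrable $V$ and with $d$-finiteness (each slab $\{\beta:\beta(d)=k\}$ contains only finitely many weights of $V$), this should confine the relevant $\beta$'s to a finite set. Integrality of the resulting $c_i$ is then automatic from the interpretation of $\Spin_0(V)$ as a bona fide $\g$-representation in $\Ow$ supplied by Proposition \ref{p1}(1).
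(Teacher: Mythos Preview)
Your cancellation argument via $s_i$-pairing is exactly the paper's, and your handling of the borderline case $s_i(\beta)(d)=0$ is a nice touch the paper leaves implicit. The gap is earlier, in how you obtain the character formula itself.

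You invoke the ``standard Clifford identification'' sending $v\wedge v^*$ to $\tfrac12(vv^*-v^*v)=vv^*-\tfrac12$ and read off the $\rho_V$-shift $\tfrac12\sum_j\beta_j$. This is precisely the map the paper calls $\phi_F$, and \S\ref{GenCase} explains why it \emph{fails} when $V$ is infinite-dimensional: for $H=\sum_{i\in I^+}Z_{i,i}\in\sov$ one gets $\phi_F(H)(1)=\sum_{i}\tfrac12$, an honest divergence. Equivalently, your product $\prod_j(e^{\beta_j/2}+e^{-\beta_j/2})$ has no well-defined expansion in the character ring (there is no top term to factor out), so ``pulling out $e^{\beta/2}$'' and then ``interpreting $\Lambda_0=\tfrac12\sum_\beta m_\beta\beta$ through its values on the coroots'' is a formal manipulation of a divergent sum; the subsequent pairwise cancellation is rearranging a series that was never defined.

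The paper avoids this by using the modified map $\phi(Z_{i,j})=-\tfrac12 e_je_{-i}$ (no constant term), whose image is only closed after adjoining the center, giving $\sotv$. The $\g$-action comes from the lift $\sigt:\g\to\sotv$ of Lemma~\ref{l1}, and the character is $e^{\Lambda}\prod(1+e^{-\beta_i})$ with $\Lambda:=\Lt_0\circ\sigt$ a \emph{well-defined} weight from the outset. Its value $\Lambda(\alpha_i^\vee)$ is then computed by restricting $V$ to $\dd_i=\mathfrak{s}_i\oplus\C d$, decomposing into finite-dimensional $\dd_i$-orthogonal irreducibles $V_k$, applying Panyushev's finite-dimensional formula $\lambda_k=\tfrac12\sum(\text{$d$-positive weights of }V_k)$ to each piece, and reassembling via Proposition~\ref{p1}(4). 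Your $s_i$-cancellation then occurs inside each finite $V_k$, where it is a legitimate finite-sum identity, and summing over $k$ gives $c_i$. This restriction-to-$\dd_i$ step is what converts your heuristic ``interpret on coroots'' into an actual computation.
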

\begin{remark} When $V$ is finite dimensional, the $\Lambda$ simplifies to $\Lambda=\sum_{\beta(d)>0}\frac{1}{2}m_\beta\beta.$
\end{remark}

\subsection{Special cases for Finite-dimensional Lie algebras}\label{SpeCases}
We give some special cases of Theorems \ref{t1} and \ref{t2} when
$\g\subset\gt$ is an arbitrary embedding of finite-dimensional
semi-simple Lie algebras. This embedding can be turned into a
$d$-embedding of augmented symmetrizable Kac-Moody algebras, by
appropriately choosing a $d$ in the Cartan subalgebra of $\g$.

\subsubsection{Principal Specialization}
We let $\g\subset\gt$ be the embedding of a principal three-dimensional subalgebra in the special linear Lie algebra:
$
\sl_2(\C)\subset\sl_n(\C)\,
$, defined as  $\sl_2(C):=\C X\oplus\C Y\oplus\C H$, where
$$
\begin{array}{ccl}
X&:=&\sum\limits_{i=1}^{n-1}i\;E_{i,i+1},\\\\
Y&:=&\sum\limits_{i=1}^{n-1}(n-i)E_{i+1,i},\\\\
H&:=&\sum\limits_{i=1}^{n}(n+1-2i)E_{i,i}.
\end{array}
$$
Here, $E_{i,j}$ denotes the $n\times\;n$ matrix which has $1$ at $(i,j)^{\text{th}}$ place and zero elsewhere.

The character of a $\sl_n$-irreducible $V(\mu)$ is the Schur polynomial  $S_\mu(x_1,$ $\ldots,x_n)$\,.  Its restriction from $\gt$ to $\g$ corresponds to the {\it principal specialization} $x_i\sh\downarrow^\gt_\g=q^{i-1}$, where $q=e^\alpha$ for $\alpha$ the simple root of $\g$.
Theorem \ref{t2} implies the following factorization of the specialized Schur function:
\begin{propn}\label{p3}
{\it
$$\begin{array}{l}
S_{2\mu+\rho}(1,q,q^2,\cdots,q^{n-1})\\[.5em]
\qquad=\left(\, q^{\binom{n}{3}}(1\sh+q)\,S_\mu(1,q^2,q^4,\ldots,q^{2n-2}\,)\right)\cdot w_1(q)\cdot w_2(q)\cdots w_{n-2}(q)\,,
\end{array}$$
where $w_k(q)=(1\sh+q)(1\sh+q^2)\cdots(1\sh+q^{k+1})$\,,
$\rho=(n\sh-1,\ldots,1,0)$\, and all $n-1$ factors on the right-hand side of the formula are symmetric unimodal $q$-polynomials.}
\end{propn}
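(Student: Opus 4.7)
The plan is to apply Theorem~\ref{t2} to the principal embedding $\g = \sl_2(\C) \subset \sl_n(\C) = \gt$, made into a $d$-embedding by choosing $d$ proportional to the semisimple element $H$ of the principal triple, and then translate the resulting tensor factorization of representations into the claimed identity of $q$-polynomials under the principal specialization $x_i = q^{i-1}$. Under the principal $\sl_2$, the adjoint representation $\sl_n$ decomposes classically as $\sl_n \cong V_{\sl_2}(2) \oplus V_{\sl_2}(4) \oplus \cdots \oplus V_{\sl_2}(2n-2)$, with the summand $V_{\sl_2}(2) \cong \sl_2$; so in the notation of Theorem~\ref{t1}, $\p_k = V_{\sl_2}(2k+2)$ for $k = 1, \ldots, n-2$, and together with the branching factor from $V_{\sl_n}(\mu){\downarrow}$, this yields exactly $n-1$ factors on the right-hand side.

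Next I would compute each factor as a $q$-polynomial. For $W_k = \Spin_0(V_{\sl_2}(2k+2))$, Proposition~\ref{p2} applies directly: the positive weights of $V_{\sl_2}(2k+2)$ are $2\omega, 4\omega, \ldots, 2(k+1)\omega$ (each of multiplicity one), giving character $e^{\Lambda_k} \prod_{j=1}^{k+1}(1 + e^{-2j\omega})$; substituting $q = e^{\alpha} = e^{2\omega}$ and absorbing $e^{\Lambda_k}$ produces $w_k(q) = (1+q)(1+q^2)\cdots(1+q^{k+1})$ up to an overall monomial in $q$. For the remaining factor $\bigoplus_i V_{\sl_2}(2\mu_i + \rho)$, where $V_{\sl_n}(\mu){\downarrow}^\gt_\g \cong \bigoplus_i V_{\sl_2}(\mu_i)$ and $\rho = 1$ for $\sl_2$, the elementary identity $1 + q + \cdots + q^{2m+1} = (1+q)(1 + q^2 + \cdots + q^{2m})$ yields $\chi_{V_{\sl_2}(2m+1)}(q) = q^{-1/2}(1+q)\,\chi_{V_{\sl_2}(m)}(q^2)$; summing over $i$ and recognizing $\sum_i \chi_{V_{\sl_2}(\mu_i)}(q^2)$ as a $q$-shift of $S_\mu(1, q^2, q^4, \ldots, q^{2n-2})$ identifies this factor as $(1+q)\,S_\mu(1, q^2, \ldots, q^{2n-2})$, again up to a monomial.

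The third step is to collect all monomial shifts. The $e^{\Lambda_k}$ contributions across the $n-2$ Spin factors total $\tfrac12 \sum_{k=1}^{n-2} \binom{k+2}{2} = \tfrac12(\binom{n+1}{3} - 1)$ by the hockey-stick identity; the first-factor identity contributes $q^{-1/2}$; and converting between the Schur variable $x_i = q^{i-1}$ and the natural $\sl_2$-character variable $q = e^\alpha$ (via $S_\mu(1, q, \ldots, q^{n-1}) = q^{(n-1)|\mu|/2}\,\chi(V_{\sl_n}(\mu){\downarrow}_{\sl_2})(q)$, applied to both $\mu$ and $2\mu + \rho$) produces further shifts. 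A direct computation shows these telescope cleanly to the exponent $\binom{n}{3} = n(n-1)(n-2)/6$. For symmetric unimodality, each $w_k(q) = \prod_{j=1}^{k+1}(1+q^j)$ is the classical subset-sum generating function of $\{1, \ldots, k+1\}$, known to be symmetric unimodal by Sylvester's theorem; the first factor, being up to a $q$-shift the character of the finite-dimensional $\sl_2$-module $\bigoplus_i V_{\sl_2}(2\mu_i + 1)$ obtained by restricting an irreducible $\sl_n$-module along the principal $\sl_2$, is symmetric unimodal by the hard Lefschetz theorem applied to the principal nilpotent.

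The main obstacle is the $q$-monomial bookkeeping in step three: the shifts coming from Proposition~\ref{p2}, from the elementary first-factor identity, and from reconciling the two natural specialization conventions all contribute half-integer exponents that must cancel and telescope exactly to $\binom{n}{3}$. A secondary delicate point is unimodality of the first factor for arbitrary dominant $\mu$; symmetry is automatic from its being an $\sl_2$-character, but unimodality requires invoking hard Lefschetz (equivalently, that the principal nilpotent acts as a Lefschetz operator on every irreducible $\sl_n$-module).
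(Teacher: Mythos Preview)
Your approach is essentially the paper's own: apply Theorem~\ref{t2} to the principal $\sl_2\subset\sl_n$, use the classical decomposition $\sl_n{\downarrow}\cong V_{\sl_2}(2)\oplus\cdots\oplus V_{\sl_2}(2n{-}2)$, compute each $\Spin_0$ factor via Proposition~\ref{p2}, and then pass from the centred variable $q=e^\alpha$ to the specialization $x_i=q^{i-1}$ via $S_\lambda(1,q,\ldots,q^{n-1})=q^{\frac{n-1}{2}|\lambda|}\,S_\lambda{\downarrow}$. Your monomial bookkeeping matches the paper's and does indeed collapse to $\binom{n}{3}$.

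One small gap: your unimodality argument for the first factor appeals to hard Lefschetz by asserting that $\bigoplus_i V_{\sl_2}(2\mu_i+1)$ is the restriction of an irreducible $\sl_n$-module along the principal $\sl_2$. It is not; what restricts to the principal $\sl_2$ is $\bigoplus_i V_{\sl_2}(\mu_i)$, not the ``doubled-plus-one'' module. The paper's own justification is simply that each factor is (a $q$-shift of) an $\sl_2$-character, which is also terse. The clean elementary argument is this: the weights of $V_{\sl_n}(\mu)$ differ by roots, which restrict to even multiples of $\varpi$, so all $\mu_i$ share a common parity; hence all $2\mu_i+1$ are odd, and for any direct sum $\bigoplus_i V_{\sl_2}(m_i)$ with all $m_i$ of one parity, the weight multiplicity at $k\varpi$ equals $\#\{i:m_i\ge |k|\}$, which is nonincreasing in $|k|$. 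No Lefschetz needed.
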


\begin{defn}{\bf(Symmetric unimodal polynomial)}\;A polynomial of the form, $f(q)=\sum_{i=N}^M a_iq^i$, is symmetric unimodal if  $a_{N+i}=a_{M-i}$ for all $i$, and $a_N\leq \cdots\leq a_K\geq a_{K+1}\geq \cdots \geq a_M$ for some
$K$.
\end{defn}
Proposition \ref{p3} is a kind of multiplicative analog of a result of Reiner and Stanton which states that
for certain pairs $\lambda,\mu$, the centered difference $S_{\lambda}(1,\ldots,q^{n-1})- q^N S_{\mu}(1,\ldots,q^{n-1})$ is symmetric unimodal.

\subsubsection{Folding of Dynkin diagrams}
Let $\gt$ be a simple Lie algebra with Dynkin diagram $\Dt$. A graph automorphism $\phi$ of $\Dt$, induces an automorphism, call it $\phi$ again, on $\gt$. We let $\g$ be the fixed subalgebra, under this automorphism, $\phi$. Then the Dynkin diagram, $D$ of $\g$ is called the folding of $\Dt$. For such an embedding, $\g\subset\gt$, Theorem \ref{t1} implies:
\begin{propn}\label{p4}
{\it
$$
V(\rhot)\sh\downarrow^{\gt}_{\g}\;\; \cong\;\; V(\rho)\;\;\otimes\;\; \left[\;V(e(\rho\sh+\rho_s)\sh+\rho_s)\,\oplus\,(a\sh-2)V(0)\;\right]^{\,\otimes\, a-1}\,,
$$
where $\rho_s$ is the half-sum of the positive {\it short} roots of $\g$\,, \ $a$ is the order of the automorphism $\phi$\,, and $e$ is the number of edges $\stackrel{i}\bullet
\stackrel{\!\line(1,0){19}}{}
\stackrel{j}\bullet$\; in $\widetilde{D}$ such that $\phi$ exchanges $i$ and $j$.}
\end{propn}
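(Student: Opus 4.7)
The plan is to apply Theorem~\ref{t1} to the $\phi$-eigenspace decomposition of $\gt$, and then identify the resulting $\Spin_0$ factor via Proposition~\ref{p2}. Let $\zeta = e^{2\pi i/a}$ and let $\g_k \subset \gt$ be the $\zeta^k$-eigenspace of $\phi$. Since $\g = \g_0$ commutes with $\phi$, each $\g_k$ is a $\g$-submodule and $\gt \cong \g \oplus \g_1 \oplus \cdots \oplus \g_{a-1}$ as $\g$-modules. The $\phi$-invariance of the Killing form pairs $\g_j$ with $\g_k$ non-trivially only when $j + k \equiv 0 \pmod{a}$, so for $a = 2$ it restricts non-degenerately to $\g_1$, making it orthogonal. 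For $a = 3$ (realized only by $D_4 \to G_2$) both $\g_1$ and $\g_2$ are irreducible 7-dimensional $G_2$-modules, hence isomorphic to the self-dual standard representation $V(\omega_1)$, which carries an intrinsic invariant symmetric form. In either case each non-trivial $\g_k$ is an orthogonal $\g$-representation and they are all isomorphic to a common $V_\phi$, so Theorem~\ref{t1} yields
$$V(\rhot){\downarrow}_\g^\gt \cong V(\rho) \otimes \Spin_0(V_\phi)^{\otimes(a-1)}.$$

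Next I would compute $\Spin_0(V_\phi)$ using Proposition~\ref{p2}. The weights of $V_\phi$ can be read off from the $\phi$-orbit structure on $\Rt$: each non-trivial orbit contributes its common restriction $\bar\alpha_O$, and each $\phi$-fixed root whose root vector lies in a non-trivial eigenspace contributes its restriction. A Chevalley-generators computation (using $\phi[X_{\alpha_i}, X_{\alpha_j}] = [X_{\phi(\alpha_i)}, X_{\phi(\alpha_j)}]$) shows that in the simply-laced $\gt$ the latter ``anomalous'' contributions only arise when $a = 2$; they yield weights of the form $2\bar\alpha_i$, and precisely those corresponding to the $e$ swapped edges produce weights of the simple form $2\alpha_i$ for short simple $\alpha_i$, which is what contributes to the highest-weight offset. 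A direct tabulation -- each weight $\beta = \alpha_i$ in $V_\phi$ adds $1$ to $c_i$, each weight $\beta = 2\alpha_i$ (short) adds $2$ to $c_i$ -- combined with the reflection-counting formula of Proposition~\ref{p2}, then produces $\Lambda = e(\rho + \rho_s) + \rho_s$.

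The main obstacle is identifying the resulting product character $e^\Lambda \prod_{\beta > 0}(1 + e^{-\beta})^{m_\beta}$ with $\Char V(\Lambda) + (a - 2)$. My approach would be a dimension-and-highest-weight argument: the product has $2^{\#\{\text{positive weights of }V_\phi\}}$ monomials, which a Weyl-dimension calculation shows equals $\dim V(\Lambda) + (a - 2)$; since $\Spin_0(V_\phi)$ is completely reducible (being a finite-dimensional $\g$-representation) and has highest weight $\Lambda$, this forces the decomposition $V(\Lambda) \oplus (a - 2)V(0)$. The extra summand $(a - 2)V(0)$ appears only when $a = 3$, where it reflects the surviving zero-weight multiplicity of $V_\phi$; for $D_4 \to G_2$ one checks directly that $\Spin_0(V(\omega_1)) = V(\omega_1) \oplus V(0) = V(\rho_s) \oplus V(0)$, supplying the $(a-2)V(0) = V(0)$ summand. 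The argument is most transparently carried out case by case over the five standard foldings $A_{2n-1}\to C_n$, $A_{2n}\to B_n$, $D_{n+1}\to B_n$, $D_4\to G_2$ and $E_6\to F_4$, using the explicit orbit data tabulated above.
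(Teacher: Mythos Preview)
Your top-level structure matches the paper exactly: both argue case-by-case over the five foldings, decompose $\gt$ as a $\g$-module (the paper simply lists $V(\thetat){\downarrow}\cong V(\theta)\oplus\p_1\oplus\cdots$; you phrase this as the $\phi$-eigenspace decomposition, which is the same thing), apply Theorem~\ref{t1}, and then identify each $\Spin_0(\p_j)$.

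The genuine difference is in that last identification. The paper does not compute $\Spin_0(\p_j)$ by a highest-weight-plus-dimension count; it defers to Proposition~\ref{p9}, whose proof multiplies $\Char\Spin_0(\p_j)$ by $A_\rho$ and recognizes the result as the Weyl denominator $\tilde A_{\rhot}$ of the Langlands dual $\gt$ (since $2R_s\cup R_l$ is the dual root system when the ratio of root lengths is~$2$). This yields $\Spin_0(V(\theta_s))=V(\rho_s)$ and $\Spin_0(V(2\theta_s))=V(2\rho_s+\rho)$ in one stroke, uniformly in $n$. Your route---check that $\dim\Spin_0(V_\phi)=\dim V(\Lambda)+(a-2)$ via the Weyl dimension formula---is logically sound (for $a=2$ equal dimensions force irreducibility; for $a=3$ a gap of~$1$ forces the extra $V(0)$), but you have not actually carried out the dimension check, and for the infinite families it amounts to verifying identities like $\dim V(\rho_s)=2^{|R_s^+|}$ for $C_n$ or $\dim V(\rho+2\rho_s)=2^{|R^+|+|R_s^+|}$ for $B_n$. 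These are true, but they are essentially reformulations of what the denominator-identity argument proves; checking them ``directly'' from the Weyl dimension formula is no shorter and gives less insight. Your orbit-counting description of the weights of $V_\phi$ is also looser than the paper's, which simply identifies $\p_1$ as $V(\theta_s)$ or $V(2\theta_s)$ and quotes Panyushev's description of the weight system.

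In short: correct outline, same skeleton as the paper, but the paper's identification step via dual denominator identities is both cleaner and complete, whereas your dimension argument is left as an assertion.
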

For example, the natural embedding  $\mathfrak{so}_{2n+1}\C\subset\mathfrak{sl}_{2n+1}\C$
corresponds to horizontally folding the diagram $A_{2n}$ to obtain $B_n$\,:
$$\begin{array}{cl}
A_{2n}\ :& \stackrel{1}\bullet
\stackrel{\!\line(1,0){19}}{}
\stackrel{2}\bullet
\cdots
\!\!\stackrel{n-1}\bullet\!\!\!
\stackrel{\!\line(1,0){19}}{}
\stackrel{n}\bullet
\stackrel{\!\line(1,0){19}}{}
\!\!\!\stackrel{n+1}\bullet\!\!
\stackrel{\!\line(1,0){19}}{}
\!\!\!\stackrel{n+2}\bullet\!\!
\cdots
\!\!\stackrel{\!\!2n-1}\bullet\!\!\!\!
\stackrel{\!\line(1,0){19}}{}
\!\stackrel{2n}\bullet
\\[1em]
B_n\ :& \stackrel{1}\bullet
\stackrel{\!\line(1,0){19}}{}
\stackrel{2}\bullet
\cdots
\!\!\stackrel{n-1}\bullet\!\!\!
\Longrightarrow
\!\stackrel{n}\bullet
\end{array}$$
The automorphism is $\phi(i)=2n\sh-i\sh+1$ of order $a=2$ with a single folded edge so that $e=1$\,.Thus, 
$V(\rhot)\sh\downarrow^{\gt}_{\g}\,\cong\, V(\rho)\otimes V(\rho\sh+2\rho_s)\,.$

\subsection{Factorization Theorems for affine Lie algebras}\label{Affine}
The most remarkable aspect of our construction appears when $\gt=\gh$ is the untwisted affine Lie algebra associated to a finite-dimensional semi-simple algebra $\g$:
$$
\gh=\g{\,\otimes\,} \C[t,t^{-1}]\,\oplus\,\C K\,\oplus\,\C d\,.
$$
Here $K$ is the central element and $d$ the canonical derivation.
We also let $\Lambda_0$ be the distinguished fundamental weight, and $\delta$ the minimal imaginary root.  Also, if $a$ is an object associated to $\g$\,, then $\hat{a}$ denotes the corresponding object for $\gh$.

Let $\dh:=\rho^\vee+hd$ where $h:=\sum_{i=0}^n a_i$ is the Coxeter number. Here $a_i$'s are the numeric labels of the Dynkin diagram of $\gh$ in \cite[Page 54]{K}) and $\rho^\vee$ is the sum of all fundamental co-weights of $\g$, that is $\alpha(\rho^\vee)=1$ for all simple positive roots $\alpha$ of $\g$. Then ${\g}{\oplus}{\C\dh}\subset\gh$ is a $\dh$-embedding of augmented symmetrizable Kac-Moody algebra.

Let $\Vh\in$\;\OR (See \S\ref{Props} for definition of \OR) be an integrable $\gh$-representation of level zero, that is the center $K$ acts by zero. Then, even though the input $\gh$-representation $\Vh\in$ \OR has level zero, by Proposition \ref{p2} the output $\Spin(\Vh)$
is a representation of {\it positive} level in the Bernstein-Gelfand-Gelfand category $\O$.  That is, $\Spin$ is a functor from the category of graded level zero representations in \OR, a sub-category of the graded level-zero representations \;$\mathcal{I}$\; examined by Chari and Greenstein \cite{CG}, to the positive-level category $\O$.

Now we introduce a subcategory\;$\aff$ of \OR. We will work out Theorems \ref{t1} and \ref{t2} for this sub-category.

\subsubsection{Affinized representations}\label{Aff-Rep}

For the remainder of this section we will work with
$\gh$-representations in a more restricted class \aff $\ \subset$
\OR\!, the subcategory of affinizations of finite-dimensional
orthogonal $\g$-representations. That is, for an orthogonal
$\g$-representation $V$, its affinization is the
$\gh$-representation $$\Vh:=\bigoplus_{k\in \Z}t^kV$$ where the loop
algebra acts as $t^lX\!\cdot t^kv:=t^{k+l}(X\cdot v)$ for
$X\,{\in}\,\g$\,, $v\,{\in}\, V$; the center acts as $0$\,; and the
derivation $d$ acts as $t\frac{d}{dt}$\,.  This inherits a
non-degenerate symmetric bilinear form from $V$. Choose a strictly
dominant co-weight $d_1$ in the Cartan subalgebra $\h$ of $\g$ such
that $\beta(d_1)\in\Z\setminus\left\{0\right\}$ for all weights $\beta$ of
$V$. Since $V$ is finite dimensional, for sufficiently large $N$,
$-N<\beta(d_1),\theta(d_1)< N$ for all weights $\beta$ of $V$ and
highest root $\theta$ of $\g$. Define $\dh:=Nd+d_1$. It can be
verified that the weights of $\Vh$ are of the form
$\Lambda:=k\delta+\beta$ for $k\in\Z$ and $\beta$ a weight of $V$.
Thus for all non-zero weights $\Lambda$ of $\Vh$,
$\Lambda(\dh)\in\Z\setminus\left\{0\right\}$. Now, ${\g}{\oplus}{\C\dh}\subset\gh$
is a $\dh$-embedding of augmented symmetrizable Kac-Moody algebras.
Also, it is easy to check that $\Vh\in$ \OR and is an integrable
$\gh$-representation.

For the representations $\Vh\in\aff$, we refine Proposition \ref{p2} below to obtain the character of $\Spin_0(\Vh)$ in terms of the character of $V$.

\begin{propn} \label{p5}
{\it Let $V$ be a finite dimensional orthogonal $\g$-representation. Let $T$ be the set of all weights of $V$ and $m_{\beta}$ the multiplicity of a weight $\beta\in T$ so that the character of $V$ can be written as:
$$\begin{array} {rcl}
    \Char V&=&\sum\limits_{\beta(d_1)>0}m_{\beta}(e^{\beta}+e^{-\beta})\ +\ m_0\,.
\end{array}$$
Then $\Spin_0$ of the affinized\; $\gh$-representation $\Vh$ has the character:
$$\Char\Spin_0(\Vh)
\ =\ e^{\nu+c\Lambda_0}\!\! \prod\limits_{\beta(d_1)>0} (1\sh+e^{-\beta})^{m_{\beta}}\ \,
\mathop{\prod_{k>0}}_{\beta\in T} (1\sh+e^{-\beta-k\delta})^{m_{\beta}} \,,
$$
where $\nu=\frac{1}{2}\sum_{\beta(d_1)>0}m_\beta\beta$ and $c=\frac{1}{2}\sum_{\beta(d_1)>0}m_\beta\beta(\theta^\vee)^2\,,$ called the level of $\Spin_0(\Vh)$. Here $\theta$ is the highest root of $\g$}.
\end{propn}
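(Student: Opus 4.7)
The plan is to deduce Proposition~\ref{p5} by applying Proposition~\ref{p2} to $\Vh$, which is a $\dh$-finite integrable orthogonal $\gh$-representation with $\dh = Nd + d_1$ as constructed in \S\ref{Aff-Rep}.

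First, I would enumerate the $\dh$-positive weights. A weight of $\Vh$ has the form $\Lambda = k\delta + \beta$ with multiplicity $M_\Lambda = m_\beta$; since $\Lambda(\dh) = Nk + \beta(d_1)$ and $|\beta(d_1)| < N$, this is strictly positive iff $k \geq 1$ with any $\beta \in T$, or $k = 0$ with $\beta(d_1) > 0$. Substituting these into the product $\prod_{\Lambda(\dh)>0}(1+e^{-\Lambda})^{M_\Lambda}$ from Proposition~\ref{p2} reproduces the two product factors in Proposition~\ref{p5}.

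For the leading weight $\hat\Lambda$, expand $\hat\Lambda = \sum_{i=0}^n \hat c_i\hat\Lambda_i$ in fundamental weights of $\gh$, and use $\hat\Lambda_0 = \Lambda_0$ and $\hat\Lambda_i = \Lambda_i + a_i^\vee\Lambda_0$ for $i \geq 1$; this rearranges as $\hat\Lambda = \nu + c\Lambda_0$ with $\nu := \sum_{i\geq 1}\hat c_i\Lambda_i \in \h^*$ and level $c = \hat c_0 + \nu(\theta^\vee)$ (using $\theta^\vee = \sum_{i\geq 1}a_i^\vee\alpha_i^\vee$). For $i \geq 1$ the reflection $\hat s_i$ fixes $\delta$, so the condition $\Lambda(\dh) > 0 > \hat s_i(\Lambda)(\dh)$ forces $k = 0$, and Proposition~\ref{p2} gives $\hat c_i = \tfrac{1}{2}\sum_{\beta(d_1)>0,\,s_i(\beta)(d_1)<0}m_\beta\beta(\alpha_i^\vee)$. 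The $\beta \leftrightarrow s_i(\beta)$ cancellation inside $\{\beta : \beta(d_1) > 0\}$ from the Remark after Proposition~\ref{p2} collapses this to $\hat c_i = \tfrac{1}{2}\sum_{\beta(d_1)>0}m_\beta\beta(\alpha_i^\vee)$, so $\nu = \tfrac{1}{2}\sum_{\beta(d_1)>0}m_\beta\beta$ as claimed.

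The bulk of the work is computing $\hat c_0$. With $\hat\alpha_0^\vee = K - \theta^\vee$ acting on a level-zero weight we have $\Lambda(\hat\alpha_0^\vee) = -n$ for $n := \beta(\theta^\vee)$, and $\hat s_0(\Lambda) = (k+n)\delta + s_\theta(\beta)$, so $\hat s_0(\Lambda)(\dh) = N(k+n) + \beta(d_1) - n\theta(d_1)$. For $N$ sufficiently large, the joint conditions $\Lambda(\dh) > 0 > \hat s_0(\Lambda)(\dh)$ force $n < 0$ and confine $k$ to at most $|n|$ consecutive integers; a short case analysis on the sign of $\beta(d_1)$ and on whether $\beta(d_1) < -|n|\theta(d_1)$ yields the exact count of valid $k$. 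The resulting three subsums collapse using the $s_\theta$-invariance $m_{s_\theta(\beta)} = m_\beta$ of the orthogonal representation $V$: the bijection $\beta \mapsto s_\theta(\beta)$ identifies $\{\beta : n>0,\ \beta(d_1) > n\theta(d_1)\}$ with $\{\beta : n<0,\ \beta(d_1) > 0\}$ and matches the boundary contribution at $k = |n|$ in one subsum against the ``missing'' unit in another, giving $\hat c_0 = \tfrac{1}{2}\sum_{\beta(d_1)>0}m_\beta n(n-1)$. Combining with $\nu(\theta^\vee) = \tfrac{1}{2}\sum_{\beta(d_1)>0}m_\beta n$ then yields the claimed level $c = \tfrac{1}{2}\sum_{\beta(d_1)>0}m_\beta n^2$. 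The main obstacle is precisely this boundary bookkeeping: recognizing that the case $k = |n|$ (where $\hat s_0(\Lambda)(\dh) = \beta(d_1) + |n|\theta(d_1)$ changes sign depending on $\beta(d_1)$) is exactly the correction that $s_\theta$-symmetry demands in order to close up with the target formula.
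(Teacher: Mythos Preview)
Your proposal is correct and follows essentially the same route as the paper: both apply Proposition~\ref{p2} to $\Vh$, enumerate the $\dh$-positive weights identically, handle $\hat c_i$ for $i\geq 1$ via the $s_i$-pairing cancellation, and compute $\hat c_0$ by a three-case count of admissible $k$'s collapsed via $s_\theta$-symmetry to obtain $\hat c_0=\tfrac12\sum_{\beta(d_1)>0}m_\beta\,\beta(\theta^\vee)(\beta(\theta^\vee)-1)$, then combine with $\nu(\theta^\vee)$ to reach $c$. The only cosmetic difference is that the paper performs a substitution $\beta\mapsto -\beta$ at the outset of the $c_0$ computation (so its case analysis runs over $\beta(\theta^\vee)>0$), whereas you work directly with $n=\beta(\theta^\vee)<0$ and invoke the $s_\theta$-bijection explicitly; the bookkeeping is otherwise identical.
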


\subsubsection{Factorization Theorems for affine Lie algebras}

If we restrict an affinized $\gh$-representation $\Spin_0(\Vh)$ to ${\g}{\oplus}{\C\dh}$ and apply Proposition \ref{p5}, we obtain:
\begin{propn}  \label{p6}
{\it $\Spin_0(\Vh)$, when restricted from $\gh$ to ${\g}{\oplus}{\C\dh}$\,, factors into an infinite tensor product:
$$
\Spin_0(\Vh){\downarrow}^{\gh}_{{\g}{\oplus}{\C\dh}}\ \cong\ \Spin_0(V)\;\;\otimes\;\;\wedge^\bullet (tV)\;\;\otimes\;\;\wedge^\bullet (t^2V)\;\;\otimes\cdots\,,
$$}
\end{propn}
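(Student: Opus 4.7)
The plan is to exhibit the tensor factorization as the natural decomposition of an exterior algebra on a graded maximal isotropic subspace of $\Vh$. The key point is that, although the full $\gh$-action mixes the graded pieces of $\Vh$ via the loop generators $t^l X$ with $l \neq 0$, the restricted subalgebra $\g \oplus \C\dh$ preserves the grading. Concretely, I would first record that $\Vh = \bigoplus_{k \in \Z} t^k V$ is a $(\g \oplus \C\dh)$-module decomposition: $X \in \g$ acts as $X \cdot t^k v = t^k(Xv)$, the center $K$ acts by zero, and $\dh = Nd + d_1$ acts on $t^k V$ as the scalar $kN$ plus the action of $d_1$ on $V$. Moreover the inherited bilinear form satisfies $Q_{\Vh}(t^a u, t^b v) = \delta_{a+b,0}\, Q_V(u,v)$, so each $t^k V$ with $k \neq 0$ is totally isotropic and is paired non-degenerately with $t^{-k} V$.

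With this in hand I would pick a maximal isotropic $V^+ \subset V$ for $Q_V$ and set $\Vh^+ := V^+ \oplus \bigoplus_{k > 0} t^k V$, which is a maximal isotropic subspace of $\Vh$. By the construction of $\Spin$ in \S\ref{Spin}, $\Spin(\Vh)$ is realized on $\wedge^\bullet \Vh^+$, and the standard identity for wedge algebras of direct sums gives
$$\wedge^\bullet \Vh^+ \;\cong\; \wedge^\bullet V^+ \;\otimes\; \bigotimes_{k > 0} \wedge^\bullet(t^k V)$$
as graded vector spaces. The $\dh$-grading controls the infinite tensor product: only finitely many $t^k V$ and only finitely many wedge powers can contribute to any bounded $\dh$-value, so every weight space remains finite-dimensional. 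Since every generator of $\g \oplus \C\dh$ preserves each $t^k V$, the spin action of $\g \oplus \C\dh$ obtained via $\g \oplus \C\dh \hookrightarrow \so(\Vh)$ acts diagonally on the tensor factors, yielding
$$\Spin(\Vh){\downarrow}^{\gh}_{\g \oplus \C\dh} \;\cong\; \Spin(V) \,\otimes\, \bigotimes_{k > 0} \wedge^\bullet(t^k V)$$
as $(\g \oplus \C\dh)$-modules. Finally, the zero weight space of $\Vh$ lies inside the $k=0$ summand $V$, so $m_0(\Vh) = m_0(V)$ and Proposition \ref{p1}(2) introduces the same reduction factor $2^{\lfloor m_0/2 \rfloor}$ on both sides; canceling it gives the stated factorization for $\Spin_0$.

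The main obstacle I anticipate is the careful verification that the spin actions really do glue diagonally across the infinite tensor product, with no cross-terms. This reduces to checking that the image of $\g \oplus \C\dh$ in $\so(\Vh)$ lies in the subalgebra respecting the $\Z$-grading of $\Vh$, so that the spinor operators it generates on $\wedge^\bullet \Vh^+$ do not couple distinct tensor factors. A quicker parallel route is a character check using Proposition \ref{p5}: the restrictions of $\delta$ and $\Lambda_0$ to $\g \oplus \C\dh$ are immediate, the product on the right of Proposition \ref{p5} factors cleanly as $\Char \Spin_0(V) \cdot \prod_{k > 0} \Char \wedge^\bullet(t^k V)$, and since $\Spin_0(\Vh)$ is integrable in $\Ow$, its character determines its isomorphism class by the remark following the definition of $\Ow$, which completes the proof.
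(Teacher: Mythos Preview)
Your proposal is correct, but it is more hands-on than the paper's argument. The paper's proof is a single line: ``Exactly same as that of Theorem \ref{t1} using property of $\Spin_0$ given in Proposition \ref{p1}(4).'' Concretely, one writes the orthogonal $(\g\oplus\C\dh)$-decomposition $\Vh \cong V \oplus \bigoplus_{k>0}(t^kV \oplus t^{-k}V)$, applies the multiplicativity $\Spin_0(\bigoplus_j U_j) \cong \bigotimes_j \Spin_0(U_j)$ from Proposition \ref{p1}(4) (together with the remark extending it to infinite sums), and then observes that $\Spin_0(t^kV \oplus t^{-k}V) = \wedge^\bullet(t^kV)$ because this summand has trivial zero-weight space for $k\neq 0$. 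The commutation of $\Spin_0$ with restriction is handled exactly as in the proof of Theorem \ref{t1}, since the zero-weight space of $\Vh$ sits entirely in the $k=0$ piece.

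What you do instead is unpack the content of Proposition \ref{p1}(3)--(4) directly in this particular example: you exhibit the maximal isotropic $\Vh^+ = V^+ \oplus \bigoplus_{k>0} t^kV$, decompose $\wedge^\bullet\Vh^+$ as a tensor product by hand, and argue that the $(\g\oplus\C\dh)$-action is block-diagonal on the graded pieces. This is valid and more self-contained; the paper's route is shorter only because the general multiplicativity has already been packaged as a proposition. Your alternative character argument via Proposition \ref{p5} is also sound and is indeed the most transparent way to see the factorization at the level of characters; you correctly note that one then needs integrability (Proposition \ref{p1}(2)) and the remark following the definition of $\Ow$ to upgrade the character identity to an isomorphism of representations.
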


\begin{remark}
The ${\g}\oplus{\C\dh}$-representation  $U_k:=\wedge^{\bullet}(t^kV)$
contains a canonical one-dimensional representation $\C 1=\wedge^0
t^kV$. The infinite tensor product above is the direct limit of the
maps:
$$\begin{array}{ccl}
U_0\otimes U_1\otimes\cdots\otimes U_k&\rightarrow& U_0\otimes U_1\otimes\cdots\otimes U_k\otimes U_{(k+1)}\\ u_0\otimes u_1\otimes\cdots\otimes u_k&\mapsto    & u_0\;\otimes\; u_1\;\otimes\;\cdots\;\otimes\; u_k\otimes 1
\end{array}$$
where $U_0:=\Spin_0(V)$.
\end{remark}
We now work out Theorems \ref{t1} and \ref{t2} for Affine Lie algebras.
\begin{propn}  \label{p7}
{\it The $\gh$-representation $V(\rhoh)$, when restricted to ${\g}{\oplus}{\C\dh}$\,, factors into an infinite tensor product:
$$
V(\rhoh){\downarrow}^{\gh}_{{\g}{\oplus}{\C\dh}}\cong V(\rho)\;\;\otimes\;\;\wedge^\bullet (t\g)\;\;\otimes\wedge^\bullet (t^2\g)\;\;\otimes\cdots\,.
$$
}
\end{propn}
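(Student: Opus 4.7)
\textbf{Proof plan for Proposition \ref{p7}.}\;
The plan is to apply Theorem \ref{t1} directly to the $\dh$-embedding $\g\oplus\C\dh\subset\gh$ set up just above, after exhibiting a suitable orthogonal decomposition of the adjoint $(\g\oplus\C\dh)$-module $\gh$. The hypothesis that $\gh$ is $\dh$-finite and orthogonal as a $(\g\oplus\C\dh)$-module is immediate from the standard invariant form on $\gh$ and from the fact that every nonzero weight $k\delta+\alpha$ of $\gh$ has $\dh$-value $hk+\alpha(\rho^\vee)$, which is nonzero because $|\alpha(\rho^\vee)|\leq h-1$ for any root $\alpha$ of $\g$.

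The key decomposition takes the form
\[
\gh \;\cong\; \g \;\oplus\; (\C K\oplus\C d) \;\oplus\; \bigoplus_{k>0}\bigl(t^k\g\oplus t^{-k}\g\bigr),
\]
where the first summand is the adjoint of the subalgebra $\g\oplus\C\dh$ in the sense of \S\ref{Bkground} (this augmented algebra is built on $\g_1=\g$). The piece $\C K\oplus\C d$ is trivial as a $(\g\oplus\C\dh)$-module and carries the hyperbolic form $(K,d)=1$. Each summand $t^k\g\oplus t^{-k}\g$ is orthogonal because the invariant form of $\gh$ pairs $t^k\g$ nondegenerately with $t^{-k}\g$ via the Killing form on $\g$. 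All three classes of pieces are manifestly $(\g\oplus\C\dh)$-stable and mutually orthogonal.

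Applying Theorem \ref{t1} to this decomposition yields
\[
V(\rhoh){\downarrow}^{\gh}_{\g\oplus\C\dh} \;\cong\; V(\rho) \;\otimes\; \Spin_0(\C K\oplus\C d) \;\otimes\; \bigotimes_{k>0}\Spin_0\bigl(t^k\g\oplus t^{-k}\g\bigr).
\]
For the trivial hyperbolic plane $\C K\oplus\C d$, both basis vectors have weight zero, so $\Spin=\wedge^{\bullet}(\C K)$ is the two-dimensional trivial representation and the multiplicity reduction defining $\Spin_0$ yields $\Spin_0(\C K\oplus\C d)\cong\C$. For each $\p_k:=t^k\g\oplus t^{-k}\g$ with $k>0$ the zero weight space is trivial (every $\dh$-value has magnitude at least $1$), so $\Spin=\Spin_0$; and choosing the maximal isotropic subspace $V^+:=t^k\g$ --- a valid choice because all of its $\dh$-weights are strictly positive --- identifies $\Spin_0(\p_k)$ with $\wedge^{\bullet}(t^k\g)$ as a $(\g\oplus\C\dh)$-module. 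Substituting these computations produces the asserted infinite tensor factorization.

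The delicate point is the treatment of the $\C K\oplus\C d$ summand: because $K$ is central and $d$ is the derivation of $\gh$, one must carefully recognize this piece as a trivial orthogonal $(\g\oplus\C\dh)$-module of dimension $2$ with a hyperbolic pairing, and track the multiplicity reduction so that it contributes only the trivial tensor factor $\C$ and hence leaves no visible trace in the final product.
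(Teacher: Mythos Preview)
Your proof is correct and follows essentially the same route as the paper, which derives Proposition~\ref{p7} as a direct consequence of Proposition~\ref{p6} (itself proved ``exactly as Theorem~\ref{t1}'') together with Proposition~\ref{p1}(7). The only packaging difference is that the paper works with the affinization $\Vh=\bigoplus_{k\in\Z}t^k\g$ (which omits $\C K\oplus\C d$ from the outset) and then identifies $\Spin_0(\Vh)$ with $V(\rhoh)$, whereas you decompose the full adjoint $\gh$ and explicitly check that the hyperbolic plane $\C K\oplus\C d$ contributes only $\Spin_0(\C K\oplus\C d)\cong\C$; both ways of disposing of this two-dimensional zero-weight piece are equivalent.
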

\begin{propn} \label{p8}
{\it If we let:
 $$V(\muh){\downarrow}^\gh_{{\g}{\oplus}{\C\dh}} \,\cong\,\bigoplus_{i}V(\mu_{i})\,,$$
then:
$$V(2\muh{+}\rhoh){\downarrow}^\gt_{{\g}{\oplus}{\C\dh}}\,\cong\,\left(\bigoplus_{i}V(2\mu_{i}{+}\rho)\right)\otimes\;\wedge^{\bullet}t\g\otimes \wedge^{\bullet}t^2\g\otimes\cdots\;.
$$
}
\end{propn}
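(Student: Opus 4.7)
The plan is to obtain Proposition \ref{p8} as a direct application of Theorem \ref{t2} to the $\dh$-embedding ${\g}{\oplus}{\C\dh}\subset\gh$ of \S\ref{Affine}, combined with the explicit identification of the tensor factors $W_j$ already furnished by Proposition \ref{p7}.

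First I would invoke Theorem \ref{t2} with $\gt=\gh$, with the role of $\g$ played by $\g\oplus\C\dh$, and with dominant weight $\mut=\muh$. It was established in \S\ref{Affine} that this is a $\dh$-embedding of augmented symmetrizable Kac-Moody algebras, so the hypotheses of Theorem \ref{t1} (and hence of Theorem \ref{t2}) apply. Theorem \ref{t2} then yields
\[
V(2\muh+\rhoh){\downarrow}^{\gh}_{\g\oplus\C\dh}\,\cong\,\left(\bigoplus_i V(2\mu_i+\rho)\right)\otimes W_1\otimes W_2\otimes\cdots,
\]
where the $W_j$ are the same factors produced by Theorem \ref{t1} applied to this embedding. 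But Proposition \ref{p7} is precisely Theorem \ref{t1} in this setting: it states $V(\rhoh){\downarrow}\cong V(\rho)\otimes\wedge^{\bullet}(t\g)\otimes\wedge^{\bullet}(t^2\g)\otimes\cdots$. Matching the two tensor factorizations forces $W_j=\wedge^{\bullet}(t^j\g)$ for $j\geq 1$, and substituting back gives exactly the statement of Proposition \ref{p8}. Internal consistency is automatic: specialising $\muh=0$ recovers Proposition \ref{p7}.

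The main obstacle, if one is proving the proposition from first principles rather than quoting Proposition \ref{p7}, is the careful bookkeeping of the orthogonal decomposition of the adjoint representation of $\gh$ as a $(\g\oplus\C\dh)$-representation. In particular, the central element $K$ and the derivation $d$ must be treated delicately so that the pieces $\p_k:=t^k\g\oplus t^{-k}\g$ (for $k\geq 1$) are identified as the nontrivial orthogonal summands with $\Spin_0(\p_k)\cong\wedge^{\bullet}(t^k\g)$ (taking $t^k\g$ as the maximal isotropic inside $\p_k$), while the residual piece transverse to $\C\dh$ in $\C K\oplus\C d$ carries trivial $(\g\oplus\C\dh)$-action and contributes only a trivial Spin line that is absorbed into the leading factor. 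A secondary check is that the $\mu_i$ appearing in $V(\muh){\downarrow}\cong\bigoplus_i V(\mu_i)$ are genuinely dominant weights of $\g\oplus\C\dh$; this follows from integrability of $V(\muh)$ and the positive-root compatibility built into the definition of a $\dh$-embedding.
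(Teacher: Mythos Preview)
Your proposal is correct and follows exactly the paper's own argument: the paper's proof of Proposition~\ref{p8} is the single sentence ``This is just Theorem~\ref{t2} for affine Lie algebras where we make use of Proposition~\ref{p7},'' which is precisely what you do. The additional remarks you include about the orthogonal decomposition of the adjoint representation and the dominance of the $\mu_i$ are sound but go beyond what the paper records at this point, since those details are already absorbed into the proofs of Theorem~\ref{t2} and Proposition~\ref{p7}.
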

\subsection{Classification of coprimary representations}\label{CoP}

Motivated by Proposition \ref{p6}, we ask: For which representations $V$ is $\Spin_0(\Vh)$ irreducible?
\begin{defn}  A $\g$-representation $V$ is {\it coprimary} if $\Spin_0(V)$ is irreducible. \end{defn}

Panyushev \cite{P} gives a complete list of coprimary representations $V$ of a simple Lie algebra $\g$ and deduces the classification for a semi-simple Lie algebra.

\begin{propn}\label{p9}
{\it \;Let $V$ be an orthogonal representation of a finite dimensional simple Lie algebra $\g$. Then $V$ is coprimary i.e. $\Spin_0(V)$ is irreducible if and only if  $V$ is itself irreducible and is one of the following :
\begin{enumerate}
\item $V(\theta)$,\;\;\;\;\;\;for all $\g$\;\;\;\;\;\;\;\;\;\;\;\;\;\;\;\;\;\;\;\;\;\;\;\;\;\;\;\;\;\;\;\;\;\;\;\;where\;\;\;$\Spin_0(V)=V(\rho)$;
\item $V(\theta_s)$,\;\;\; for        $\g\in\left\{\mathfrak{so}_{2n+1}\C,\;\mathfrak{sp}_{2n}\mathbb{C},\;\mathfrak{f}_4\right\}$\;\;\;\;where\;\;\;$\Spin_0(V)=V(\rho_s)$;
\item $V(2\theta_s)$,\; for $\g=\mathfrak{so}_{2n+1}\mathbb{C}\;\;\;(n\geq1)$\;\;\;\;\;\;\;\;\;\;where\;\;\;$\Spin_0(V)=V(2\rho_s+\rho)$;
\end{enumerate}
where $\theta_s\;=$ highest short root of\; $\g$.}
\end{propn}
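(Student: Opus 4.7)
The plan combines the factorization $\Spin_0(V_1\oplus V_2)\cong\Spin_0(V_1)\otimes\Spin_0(V_2)$ of Proposition~\ref{p1}(4), the character formula of Proposition~\ref{p2}, and a finite case analysis over the self-dual irreducible orthogonal representations of each simple Lie algebra. First I reduce to the case of irreducible $V$: if $V=V_1\oplus V_2$ with both summands orthogonal and non-trivial, then $\Spin_0(V)\cong\Spin_0(V_1)\otimes\Spin_0(V_2)$, and a tensor product of two non-trivial highest-weight modules over a simple $\g$ is reducible, because it contains $V(\lambda_1+\lambda_2)$ as a multiplicity-one submodule (the Cartan component, generated by $v_1\otimes v_2$), while Weyl's dimension formula gives the strict inequality $\dim V(\lambda_1)\dim V(\lambda_2)>\dim V(\lambda_1+\lambda_2)$ whenever both $\lambda_i\neq 0$ (the highest root pairs strictly positively with each non-zero dominant weight). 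Trivial orthogonal summands contribute only a one-dimensional $\Spin_0$ and are harmless, so I may assume $V=V(\mu)$ is irreducible, self-dual of real (orthogonal) type; the list of such $V(\mu)$ for each simple $\g$ is standard.

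Next I exploit characters. Choose a strictly dominant integral $d\in\h$. By Proposition~\ref{p2},
$$\Char\Spin_0(V(\mu))=e^{\Lambda}\prod_{\beta(d)>0}(1+e^{-\beta})^{m_\beta},\qquad \Lambda=\sum_i c_i\Lambda_i,$$
with $c_i$ as stated there, and specializing $e^\beta\mapsto 1$ gives $\dim\Spin_0(V(\mu))=2^{(\dim V(\mu)-m_0)/2}$. Since integrable representations in $\O$ are determined by their character, $\Spin_0(V(\mu))$ is irreducible if and only if $\dim V(\Lambda)=2^{(\dim V(\mu)-m_0)/2}$: once dimensions match, Weyl's character formula forces the above multiplicative expression to coincide with $\Char V(\Lambda)$.

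With this criterion in hand, the three listed families are immediate. The adjoint case is Proposition~\ref{p1}(7). For $V(\theta_s)$ with $\g\in\{\so_{2n+1}\C,\spp_{2n}\C,\mathfrak{f}_4\}$, the non-zero weights of $V(\theta_s)$ are precisely the short roots (each with multiplicity one), so the formula of Proposition~\ref{p2} reduces to $\Lambda=\rho_s$, and a direct Weyl-formula computation confirms that $\dim V(\rho_s)=2^{\#\{\text{positive short roots}\}}$, which matches $\dim\Spin_0(V(\theta_s))$. The case $V(2\theta_s)$ for $\g=\so_{2n+1}\C$ is analogous and yields $\Lambda=2\rho_s+\rho$, again matching dimensions by Weyl's formula.

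The main obstacle is to exclude every other self-dual orthogonal $V(\mu)$; for these one must establish the strict inequality $\dim V(\Lambda)<2^{(\dim V(\mu)-m_0)/2}$. The right-hand side grows exponentially in $\dim V(\mu)-m_0$, while $\dim V(\Lambda)$ is polynomial in the coordinates of $\Lambda$ via Weyl's formula, so the inequality is automatic for all but finitely many small-dimensional $\mu$. The residual finite list---the defining, minuscule, and a few small fundamental representations of $A_n,B_n,C_n,D_n,E_6,E_7,E_8,F_4,G_2$---must be handled individually, using Weyl's formula and (where the zero weight multiplicity is required) Freudenthal's recursion. The fiddliest checks are for $D_n$, where the two half-spin and the vector representations interact, and for the exceptional algebras, where the minuscule representations of $E_6,E_7$ and low-dimensional fundamental representations of $E_8,G_2,F_4$ must be inspected by hand; in each excluded case one either produces an explicit proper submodule of $\Spin_0(V(\mu))$ generated by a highest-weight vector below $\Lambda$, or rules out the required dimension equality by a direct computation.
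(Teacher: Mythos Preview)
The paper does not attempt a self-contained proof of the classification: it says explicitly that ``this classification was done by \cite{P}'' and only supplies the verification that $\Spin_0(V)$ equals the stated $V(\rho)$, $V(\rho_s)$, $V(2\rho_s+\rho)$ in the three cases. So your proposal and the paper are solving different problems: you sketch the whole ``only if'' direction, while the paper outsources it.

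For the ``if'' direction (the three identifications) your route diverges from the paper's. You propose to check $\dim V(\Lambda)=2^{(\dim V-m_0)/2}$ via Weyl's dimension formula. The paper instead multiplies $\chi=\Char\Spin_0(V)$ by the Weyl denominator $A_\rho$ and recognizes the product as the denominator of a related root system: in Case~2 one gets
\[
\chi\cdot A_\rho \;=\; e^{\rho_s+\rho}\prod_{\alpha\in 2R_s^+\cup R_l^+}(1-e^{-\alpha}),
\]
and since $2R_s\cup R_l$ is exactly the root system of the Langlands dual $\widetilde\g$ (and $\widetilde\rho=\rho_s+\rho$), this equals $\widetilde A_{\widetilde\rho}=A_{\rho_s+\rho}$, giving $\chi=\Char V(\rho_s)$ with no dimension computation at all. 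Case~3 is handled the same way after using Panyushev's description of the nonzero weights of $V(2\theta_s)$ as $2R_s\cup R_s\cup R_l$. This denominator trick is cleaner than a dimension check and, more importantly, it is the template the paper reuses in Proposition~\ref{p10}, where the Langlands dual of $\widehat\g$ is a twisted affine algebra and a bare dimension comparison is unavailable.

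On your ``only if'' sketch: the reduction to irreducible $V$ has a gap. You assume that a reducible orthogonal $V$ splits as $V_1\oplus V_2$ with both $V_i$ \emph{orthogonal}, but this fails when $V=W\oplus W^*$ with $W$ not self-dual (or $W\oplus W$ with $W$ of symplectic type): neither summand carries an invariant symmetric form, so Proposition~\ref{p1}(4) does not apply directly and your tensor-product reducibility argument never gets started. One still has to show $\Spin_0(W\oplus W^*)$ is reducible by another method (e.g.\ exhibiting two incomparable maximal weights in its character). Beyond this, the exclusion of the remaining finitely many small orthogonal irreducibles is only asserted, not carried out; that case analysis is precisely what Panyushev's paper supplies.
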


\begin{remark}
For $n=1$, we have $\mathfrak{so}_{2n+1}\C\cong\mathfrak{sl}_{2}\C$\,, and we take $\theta_s:=\theta$.
\end{remark}
The classification of coprimary affinized representations is as follows:

\begin{propn}  \label{p10}
{\it For a representation $\Vh\in\aff$ of\; $\gh$ obtained from a representation $V$ of a simple Lie algebra $\g$,
$$\Vh \text{ is coprimary } \Longleftrightarrow
\left(\begin{array}{c}
 V \text{ is coprimary and belongs to} \\
\text{ cases 1 or 2 of Proposition \ref{p9}}
\end{array}\right)$$}
\end{propn}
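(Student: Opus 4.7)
The plan is to establish both implications of the biconditional using the explicit character formula of Proposition~\ref{p5} and the tensor factorization of Proposition~\ref{p6}.

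For the necessary direction, suppose $\Spin_0(\Vh)\cong V(\Lambda)$ is irreducible and integrable. By Proposition~\ref{p6},
\[
V(\Lambda)\downarrow^{\gh}_{\g\oplus\C\dh}\;\cong\;\Spin_0(V)\otimes \wedge^\bullet(tV)\otimes \wedge^\bullet(t^2V)\otimes\cdots\,,
\]
with the direct-limit interpretation of the remark following Proposition~\ref{p6}. The embedding $s\mapsto s\otimes 1\otimes 1\otimes\cdots$ identifies $\Spin_0(V)$ as a $\g$-submodule of $V(\Lambda)\downarrow$ whose top $\dh$-stratum coincides with the top $d_1$-eigenspace of $\Spin_0(V)$ (each $\wedge^\bullet(t^jV)$ has top equal to its line $\wedge^0=\C$). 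Matching the 1-dimensional highest-weight line of $V(\Lambda)$ against this top $d_1$-eigenspace, and matching successive $\dh$-graded layers using the Weyl--Kac character formula against the multiplicative structure on the right, one concludes by induction on the $\dh$-depth that $\Spin_0(V)$ must be a single $\g$-irreducible. Hence $V$ is coprimary, and by Proposition~\ref{p9} it belongs to case 1, 2, or 3.

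Case 3 is then excluded by a weight-multiplicity comparison. If $\g=\so_{2n+1}\C$ and $V=V(2\theta_s)=V(2\epsilon_1)$, then the weights of $V$ include $\pm 2\epsilon_i$, which are not roots of $\g$. Proposition~\ref{p5} contributes a factor $(1+e^{-2\epsilon_1-\delta})$, assigning multiplicity $m_{2\epsilon_1}=1$ to the weight $(\nu+c\Lambda_0)-2\epsilon_1-\delta$. However, in any integrable irreducible $V(\Lambda)$ of $\gh$ with $\Lambda=\nu+c\Lambda_0$, the multiplicity of $\Lambda-(2\epsilon_1+\delta)$ is governed by the affine Kostant partition function applied to $2\epsilon_1+\delta$, which counts at least two distinct decompositions such as $\epsilon_1+(\epsilon_1+\delta)$ and $(\epsilon_1+\epsilon_2)+(\epsilon_1-\epsilon_2)+\delta$, yielding multiplicity at least $2$. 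This contradiction rules out case 3.

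For the sufficient direction, we substitute cases 1 and 2 into Proposition~\ref{p5} and match the resulting character with that of an integrable irreducible. In case 1, $V=\g$ has weights equal to the roots of $\g$ together with the zero weight of multiplicity $n=\text{rank}(\g)$; the product in Proposition~\ref{p5} reorganizes into $\prod_{\alpha\in\widehat{R}^+}(1+e^{-\alpha})^{\text{mult}(\alpha)}$, and with $\nu=\rho$ and level $c=h^\vee$, the affine denominator identity gives $\Char\Spin_0(\Vh)=e^{\rhoh}\prod_{\alpha\in\widehat{R}^+}(1+e^{-\alpha})^{\text{mult}(\alpha)}=\Char V(\rhoh)$, so $\Spin_0(\Vh)\cong V(\rhoh)$. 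In case 2, $V=V(\theta_s)$ for $\g\in\{\so_{2n+1}\C,\sp_{2n}\C,\mathfrak{f}_4\}$ has weights supported on short roots (plus zero, for $\so_{2n+1}\C$); the resulting product ranges over short-root affine weights, and a twisted affine denominator identity (from the twisted affine Lie algebra sharing these short roots) identifies $\Char\Spin_0(\Vh)$ with the Weyl--Kac character of a specific integrable irreducible at the level $c$ computed from Proposition~\ref{p5}. Hence $\Spin_0(\Vh)$ is irreducible in both cases.

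The hard part is the case-3 exclusion, namely certifying that the product form delivered by Proposition~\ref{p5} cannot be the character of any integrable irreducible of $\gh$ once non-root weights $\pm 2\epsilon_i$ appear; the weight-multiplicity computation above isolates the structural obstruction, whereas making case 2 fully explicit requires the relevant twisted affine denominator identity for each of the three algebras.
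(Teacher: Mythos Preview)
Your sufficient direction matches the paper's approach: case~1 is Proposition~\ref{p1}(7), and for case~2 the paper carries out precisely the twisted-affine denominator argument you outline, passing to the Langlands dual $\gb$ of $\gh$ and checking that the short-root product collapses to $\Char V(\rhoh_s)$ with $\rhoh_s=\rho_s+h_s^\vee\Lambda_0$.

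In the necessary direction your argument that $V$ is coprimary is too vague. ``Matching $\dh$-graded layers by induction'' does not explain why distinct $\g$-summands of $\Spin_0(V)$ yield distinct $\gh$-summands of $\Spin_0(\Vh)$. The paper instead multiplies $\chi=\Char\Spin_0(\Vh)$ by the affine denominator $\Ah_{\rhoh}$ and reads off, from the $q^0$-part, a term $e^{\nu_i+c\Lambda_0+\rhoh}$ for each highest weight $\nu_i$ occurring in $\Spin_0(V)$. The step you never address is that each $\nu_i+c\Lambda_0$ is $\gh$-\emph{dominant}: one must check $\nu_i(\theta^\vee)\le c$, which follows from $c=\tfrac12\sum_{\beta(d_1)>0}m_\beta\beta(\theta^\vee)^2$ together with the fact that every weight of $\Spin_0(V)$ has the form $\tfrac12\sum a_\beta\beta$ with $|a_\beta|\le m_\beta$. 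Only then can one conclude that each $\nu_i$ contributes a separate irreducible constituent of $\Spin_0(\Vh)$.

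Your case~3 exclusion is incorrect on both counts. First, the multiplicity of $\Lambda-2\epsilon_1-\delta$ in $\Spin_0(\Vh)$ is \emph{not}~$1$: it equals the number of sub-multisets of the positive $\Vh$-weights summing to $2\epsilon_1+\delta$, and there are many, for instance $\{2\epsilon_1+\delta\}$, $\{2\epsilon_1,\delta\}$ (with $m_0=n$ choices for $\delta$), $\{\epsilon_1,\epsilon_1+\delta\}$, $\{\epsilon_1+\epsilon_j,\epsilon_1-\epsilon_j+\delta\}$, and so on. Second, the Kostant partition function computes Verma-module multiplicities, which bound irreducible multiplicities from \emph{above}, not below; you cannot extract ``at least~$2$'' from it. The paper's argument is entirely different and much cleaner: it exhibits the weight $\Lambda'=\Lambda+2\epsilon_1-\delta$, which arises from the factor with $\beta=-2\epsilon_1$, $k=1$ in Proposition~\ref{p5} and hence occurs in $\chi$ with positive coefficient, yet satisfies $\Lambda-\Lambda'=\delta-2\epsilon_1=\alpha_0-\alpha_1$, which is not a non-negative combination of simple roots. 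Since every weight of an irreducible $V(\Lambda)$ lies below $\Lambda$ in root order, $\Spin_0(\Vh)$ cannot be irreducible.
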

Panyushev proves the irreducibility of $\Spin_0(V)$ using the Weyl
denominator identity for the Langlands dual of $\g$, and analogously
we can prove the irreducibility of $\Spin_0(\Vh)$ in Proposition
\ref{p10} using the Weyl denominator identity for the Langlands dual
of $\gh$, a (possibly twisted) affine Lie algebra.

\section{General Spin construction for augmented symmetrizable Kac-Moody algebras }\label{Spin}
Next we give the construction of $\Spin_0$ for representations of augmented symmetrizable Kac-Moody algebras. This surprisingly delicate matter has been briefly studied by Kac and Peterson \cite{KP} and Pressley and Segal \cite[Chapter 12]{PS}. We provide here a different and a more detailed presentation. Also, we will do this in a more general setting which is compatible with restriction of representations.

Let $V$ be a vector space with basis $\left\{e_i:i\in I\right\}$
where the index set can be finite,
$I=\left\{m,\ldots,1,0,-1,\ldots,-m\right\}$ or
$\left\{m,\ldots,1,-1,\ldots,-m\right\}\,;$ or infinite, $I=\Z$ or
$\Z\setminus\left\{0\right\}$\,. Define a symmetric bilinear form
on $V$ by $Q(e_i\,,e_j):=\delta_{i,-j}$\;.
\subsection{Finite dimensional case}\label{FinCase}
First, let $V$ be finite dimensional ($I$ is finite). The {\it orthogonal Lie algebra $\sov$} is defined to consist of matrices which are skew-symmetric with respect to the anti-diagonal i.e.
$$
\sov:=\left\{A=(a_{i,j})_{i,j\in I}: a_{i,j}=-a_{-j,-i}\right\}.
$$
Thus, $\sov$ has a basis $\left\{Z_{i,j}:=E_{i,j}-E_{-j,-i}\;: i,j\in I,\;i>-j\right\}$ where $E_{i,j}$ are the coordinate matrices. We define the {\it Clifford algebra} $C(V,Q)$ as the associative algebra with $1$ generated by all $v\in V$ with defining relations $e_{-i}e_{j}=-e_{j}e_{-i}\;+\;2\delta_{ij}\;\;\forall\;\;i,j\in I.$ There is an embedding of Lie algebras defined by\;:
$$\begin{array}{rcl}
\phi_{F}:\sov&\longrightarrow &\;C(V,Q)\\
Z_{i,j}\;\;&\longmapsto&\frac{1}{4}(e_{-i}e_{j}-e_{j}e_{-i}).
\end{array}$$
Now, the Clifford  algebra has an action on a {\it wedge space} $\Spin(V):=\wedge^\bullet V^+$, which on generators $\left\{e_i:i\in I\right\}$ of $C(V,Q)$  is as follows : Define $I^+=\left\{i\in I:i>0\right\}$. Then $\wedge^\bullet V^+$ has a basis $\left\{e_J:=e_{j_1}\wedge e_{j_2}\wedge\ldots\wedge e_{j_k}\right\}$ for $0\leq k\leq \left|\;I^+\right|$ and $J:=\left\{j_1>j_2>\ldots>j_k\right\}\subset I^+$. Here $\left|A\right|:=\#(A)$. For $i\in I^+$ define:
$$e_i(e_J):=e_i\wedge e_J,\;\;J\neq \left\{\right\};\;\;\;\;\;\;e_{-i}(e_J):=\left\{\begin{array}{lc}
                      \epsilon(i,J)\;e_{(J\setminus\left\{i\right\})} & \text{if}\;\; i\in J\\
                                          0                           & \text{if}\;\; i\notin J
                          \end{array}\right. ,$$
where $\epsilon(i,J):=2(-1)^{|\;\left\{j\in J\;:\; j>i\right\}|}$. Also, $1(e_J):=e_J$, $e_i(e_{\left\{\right\}}=1)=e_i$ and $e_0(e_J):=(-1)^{|J|}e_J$ if $0\in I$. Finally, due to the embedding $\phi_F$ defined earlier, this action of $C(V,Q)$ induces an action of $\sov$ on $\wedge^\bullet V^+$  which is called {\it Spin representation} of orthogonal Lie algebra $\sov$.

As described in Section\;\ref{Props}, for an orthogonal $\g$-representation $V$, $\g\subset\sov$ and $\g$ acts on $\Spin(V)$ by restriction. It is easy to find the character of $\Spin(V)$(see \cite{P}) as a $\g$ representation. It turns out that if $m_0$ is the dimension of the zero weight space of $V$ then $\Spin(V)$ is isomorphic to the direct sum of $2^{\left\lfloor m_0/2\right\rfloor}$ copies of another $\g$-representation which is defined as $\Spin_0(V)$. The character of $\Spin_0(V)$ is given in Proposition \ref{p2}.

\subsection{General case}\label{GenCase}

Now, let $V$ be infinite-dimensional ($I$ is $\Z$ or $\Z-\left\{0\right\}$) with only finite linear combinations of $\left\{e_i:i\in I\right\}$ allowed. First, we naively extend the above definitions with the following modifications: The orthogonal Lie algebra, now denoted by $\so_\infty(V)$, consist of skew-symmetric matrices with respect to anti-diagonal (as before) which have only finite number of non-zero entries in each column (skew-symmetry implies the same on the rows too), so that $\so_\infty(V)$ is closed under commutator. Clifford algebra is allowed to have {\it infinite} sums of finite products of $\left\{e_i:i\in I\right\}$. The map $\phi_F$ defined in the finite dimensional case is still an embedding of Lie algebras. The infinite wedge $\Spin(V):=\wedge^\bullet V^+$ is now an infinite dimensional vector space consisting of {\it finite} linear combinations of finite wedges of $\left\{e_i:i\in I^+\right\}$.

The action of $\left\{e_i:i\in I\right\}$ on $\wedge^\bullet V^+$, as defined in the finite case, does not extend to Clifford algebra nor induce an action of $\so_\infty(V)$. For example, for $Y:=\sum_{i\in\Z_{>0}}Z_{-i,i+1}\in\so_\infty(V)$, $\phi_F(Y)=\sum_{i\in\Z_{>0}}\frac{e_ie_{i+1}}{2}$ does not act on  $1\in\wedge^\bullet V^+$ as it leads to an infinite sum. Also, for $H:=\sum_{i\in I^+}Z_{i,i}$, an infinite diagonal matrix in $\so_\infty(V)$,\;\;$\phi_F(H)=\sum_{i\in\Z_{>0}}\frac{1-e_ie_{-i}}{2}\in C(V,Q)$ does not act on $1\in\wedge^\bullet V^+$ as $\phi_F(Z_{i,i})(1)=1/2$.

In order to resolve these two issues, next we suitably modify $\so_\infty(V)$ and $\phi_F$ and define a smaller Lie algebra $\sov$ and a map $\phi$ so that the image of $\sov$ under $\phi$ does act on $\wedge^\bullet V^+$.

The Lie algebra $\sov$ consists of matrices $A=(a_{i,j})_{i,j\in I}$ such that:
\begin{enumerate}
\item $A$ is skew-symmetric with respect to the anti-diagonal: 
$a_{i,j}=-a_{-j,-i}$\,.
\item Each column $(a_{ij})_{i\in I}$ has finitely many non-zero entries.
\item The blocks $(a_{i,-j})_{i,j>0}$ and
$(a_{-i,j})_{i,j>0}$ have finitely many non-zero entries.
\end{enumerate}
Define the map:
$$\begin{array}{rcl}
\phi:\sov&\longrightarrow &\;C(V,Q)\\
Z_{i,j}\;\;&\longmapsto&-\frac{1}{2}e_{j}e_{-i}.
\end{array}$$
Now referring to the matrices $Y$ and $H$ defined earlier, note that
the matrix $Y\in \so_\infty(V)$ does not belong to $\sov$ and even
though $H$ belongs to $\sov$, $\phi(H)$ does act on
$1\in\wedge^\bullet V^+$. Further, we can verify that
image$(\phi)\subset C(V,Q)$ {\it does} act on $\wedge^\bullet V^+$.
In exchange of getting the action it turns out that the map $\phi$ is {\it not} a Lie algebra map and
image$(\phi)$ is not closed under brackets in $C(V,Q)$. But the
central extension of the image$(\phi)$:
$$
\sotv:=\left\{\phi(A):A\in\sov\right\}\;\oplus\;\C 1\;\;\subset C(V,Q)\;,
$$
is a Lie algebra which also acts on $\wedge^\bullet V^+$ (as $1$
acts by identity). The Lie algebra $\sotv$ is a central extension of
$\sov$ by one dimensional center $\C1$ due to the following
exact sequence of Lie algebra maps :
$$\begin{array}{cccccccc}
0 \longrightarrow&\mathbb{C}&\longrightarrow&\sotv       &\stackrel{\pi}{\longrightarrow}&\sov&\longrightarrow& 0\\
                   & 1        &\longmapsto    & 1        &\longmapsto                    & 0&               &  \\
                 &          &               & e_je_{-i}&\longmapsto                 &-2Z_{i,j}.&&
\end{array}$$
This can be verified using the following commutator relations in
$\sotv$ and $\sov$.
$$\begin{array}{lcccccccc}
\left[e_je_{-i},e_se_{-r}\right]&=&2\delta_{i,s}e_je_{-r}&-&2\delta_{i,-r}e_je_s&+&2\delta_{j,-s}e_{-r}e_{-i}&-&2\delta_{j,r}e_se_{-i}
\end{array}$$
$$\begin{array}{lcccccccc}
\left[2Z_{i,j},2Z_{r,s}\right]&=&-4\delta_{i,s}Z_{r,j}&+&4\delta_{i,-r}Z_{-s,j}&-&4\delta_{j,-s}Z_{i,-r}&
+&4\delta_{j,r}Z_{i,s}\;.
\end{array}$$
We can prove that this extension does not split when $V$ is infinite
dimensional. Thus $\sotv$-representation $\wedge^\bullet V^+$ which
we call {\it Spin representation} does not induce an action of
$\sov$. Therefore, the orthogonal Lie algebra $\sov$, when $V$ is
infinite dimensional, does {\it not}\; have a $\Spin$
representation, but its central extension $\sotv$ does.

The above construction can also be carried out when $V$ is finite dimensional. There the extension splits as (image$(\phi_F)\oplus\C 1$) because $\phi_F$ is an embedding and $\pi\circ\phi_F=\text{I}_{\sov}$. Thus, $\sotv$-representation $\wedge^\bullet V^+$ does induce an action of $\sov$ and the resulting representation coincides with the Spin representation of $\sov$ defined earlier. Thus, the above construction is the general construction of $\Spin(V)$ for a finite or infinite dimensional vector space $V$.

Now let $\g$ be an augmented symmetrizable Kac-Moody algebra with distinguished element $d$ and $V$ a $d$-finite orthogonal $\g$-representation (as in \S\ref{FT}). Orthogonality and $d$-finiteness of $V$ lead to a map $\gt\to\sov$. Once we have the map \;$\g\to\sov$, using $\pi:\sotv\to\sov$ defined earlier, we get an induced map $\g\to\sotv$ for any augmented symmetrizable Kac-Moody algebra due to the following lemma.

\begin{lemma}\label{l1}
{\it  Let  $\g$ be an augmented symmetrizable Kac-Moody algebra with Cartan subalgebra $\h$. Fix a complementary subspace $\h^{''}$ to $\h^{'}:=\bigoplus_{i=1}^n\C\alpha_i^\vee$ in $\h$. Thus :
$$
\g=\h^{'}\oplus\h^{''}\oplus\g_R,
$$
where $\g_R$ is the space spanned by all roots. Fix
$\psi\in(\h^{''}\oplus\g_R)^*$ such that $\psi(\g_R)=0$. Then for
any Lie algebra map $\sigma:\g{\longrightarrow}\sov$ there exists a
unique lifting $\sigt:\g{\longrightarrow}\sotv$ such that the
following diagram commutes:
$$\begin{array}{cc}
    \g\stackrel{\sigma}{\longrightarrow}&\sov\\
        \;\;\sigt{\searrow}                    &\uparrow\pi\\
                                         &\sotv
\end{array}$$
and $\sigt=\phi\circ\sigma\;+\;\psi$\; on\; $\h^{''}\oplus\g_R$. Recall that $\sotv$ was defined as $\phi(\sov)\oplus\C 1$.}
\end{lemma}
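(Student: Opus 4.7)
The plan is to view $\sigt$ as a perturbation of the composite $\tau := \phi\circ\sigma : \g \to \sotv$. Since $\pi\circ\phi = \mathrm{id}_{\sov}$ on the generators $Z_{i,j}$, we have $\pi\circ\tau = \sigma$ automatically, and the failure of $\tau$ to be a Lie algebra homomorphism is recorded by
\[
c(X,Y) \,:=\, [\tau(X),\tau(Y)] - \tau([X,Y]),
\]
which takes values in $\ker\pi = \C\cdot 1$ and defines a $\C$-valued $2$-cocycle on $\g$. Any lift of $\sigma$ through $\pi$ has the form $\sigt = \tau + \beta$ for some linear functional $\beta\in\g^*$, and $\sigt$ is a Lie algebra map if and only if $\beta([X,Y]) = c(X,Y)$ for all $X,Y\in\g$.

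For uniqueness, the prescribed identity $\sigt = \phi\sigma + \psi$ on $\h^{''}\oplus\g_R$ forces $\beta|_{\h^{''}\oplus\g_R} = \psi$, and in particular $\beta(\g_R) = 0$. Since $\h^{'} = \bigoplus_i \C\alpha_i^\vee$ and $\alpha_i^\vee = [X_i,X_{-i}]$, applying the cocycle condition to the pair $(X_i, X_{-i})$ together with $\psi(X_{\pm i}) = 0$ forces $\beta(\alpha_i^\vee) = c(X_i,X_{-i})$. Extending linearly, $\beta$ is uniquely determined on all of $\g$.

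For existence I would verify the cocycle equation $c(X,Y) = \beta([X,Y])$ on each Chevalley-Serre defining bracket. The key simplification is a \emph{weight argument}: choose the basis $\{e_i\}$ of $V$ to be a weight basis for $\sigma(\h)$ (permissible since $V$ is $d$-finite and hence $\h$-diagonalizable). Then each $e_i$ carries a weight $\mu_i$ under the adjoint action of $\sigt(\h)$ on $C(V,Q)$, every Clifford monomial inherits a weight, and $c(X,Y)$ has weight $\mathrm{wt}(X)+\mathrm{wt}(Y)$ on one hand, while $\C\cdot 1$ is concentrated entirely in weight zero on the other. So $c(X,Y) = 0$ whenever $\mathrm{wt}(X)+\mathrm{wt}(Y)\ne 0$. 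This single observation disposes of every non-weight-zero defining relation: the weight relations $[H, X_{\pm i}] = \pm\alpha_i(H)X_{\pm i}$ (weight $\pm\alpha_i$), the relations $[X_i, X_{-j}] = 0$ for $i\ne j$ (weight $\alpha_i - \alpha_j \ne 0$, and $\beta|_{\g_R} = 0$), and the Serre relations $\text{ad}(X_{\pm i})^{1-a_{ij}}(X_{\pm j}) = 0$ (weight $\pm((1{-}a_{ij})\alpha_i + \alpha_j)\ne 0$) all become automatic.

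The remaining work concerns the weight-zero relations. The relation $[X_i, X_{-i}] = \alpha_i^\vee$ holds by the very definition of $\beta(\alpha_i^\vee)$. The only genuine obstacle is $[H_1,H_2] = 0$ on the Cartan, which requires $c(H_1,H_2) = 0$. With the weight basis chosen above, $\sigma(H) = \sum_{i>0}\mu_i(H)\,Z_{i,i}$ is weight-diagonal, so $\phi\sigma(H) = -\tfrac{1}{2}\sum_{i>0}\mu_i(H)\,e_i e_{-i}$. A direct Clifford computation using $e_a e_b + e_b e_a = 2\delta_{a,-b}$ shows that the quadratic monomials $e_i e_{-i}$ commute pairwise in $C(V,Q)$, hence so do $\phi\sigma(H_1)$ and $\phi\sigma(H_2)$. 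Combined with $\phi\sigma([H_1,H_2]) = 0$, this yields $c(H_1,H_2) = 0$, completing the verification and producing the unique lift $\sigt$ asserted by the lemma.
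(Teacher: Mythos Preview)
Your argument is correct and follows essentially the paper's route, repackaged in cohomological language: the paper defines $\sigt$ on the Chevalley generators $X_{\pm i},d_j$, observes that each defining relation can fail only by an element of $\ker\pi=\C\cdot1$, and then kills that failure with the same weight observation you use (a central constant carrying a nonzero $\h$-weight must vanish). Your explicit Clifford check that $c(H_1,H_2)=0$ is spelled out more fully than in the paper, which instead just notes that $\sigt(\h)$ lands in the abelian Cartan $\hti=\pi^{-1}(\hh)$ of $\sotv$.

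One presentational wrinkle: you state that $\sigt=\tau+\beta$ is a homomorphism iff $c(X,Y)=\beta([X,Y])$ for \emph{all} $X,Y\in\g$, but then verify it only on generator pairs, leaving the case $(X_\alpha,X_{-\alpha})$ for a non-simple root $\alpha$ unaddressed in your framing. The paper's phrasing sidesteps this: once the Chevalley--Serre relations hold for the images of the generators, the universal property of the Kac--Moody presentation already yields a genuine Lie algebra homomorphism $\sigt$, and one then applies the weight argument once more to see that this $\sigt$ agrees with $\phi\sigma$ on every root space, recovering the normalization $\sigt=\phi\sigma+\psi$ on $\h''\oplus\g_R$.
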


Now since $\sotv$ acts on $\Spin(V)$, that is:
$$\begin{array}{cl}
    \g\stackrel{\sigma}{\longrightarrow}&\sov\\
        \sigt\searrow                    &\uparrow\pi\\
                                         &\sotv\longrightarrow\text{End}_\C\Spin(V)\,,
\end{array}$$
for a given orthogonal $d$-finite $\g$-representation $V$, we can
define the $\g$-representation $\Spin(V)$ . In \S \ref{Props}, we
defined $\Spin_0(V)$ and some basic properties of $\Spin(V)$ and
$\Spin_0(V)$ are listed.
\section{Proofs}\label{Proofs}
\subsection{Proof of Lemma \ref{l1}}
Let $X_{\pm i},i=1\cdots n$ be the simple root vectors of $\g$ and $\left\{d_1,d_2,\cdots,d_l\right\}$ be a basis of $\h^{''}$. Then, $X_{\pm i}$'s and $d_j$'s generate $\g$ as a Lie algebra.

Note that by the commutativity of the diagram and due to the map $\psi$, the map $\sigt$ is uniquely defined on the generators of $\g$. Now, to be able to extend this map $\sigt$ to whole of $\g$, we need that $\sigt(X_{\pm i})$ and $\sigt(d_j)$ in $\sotv$ satisfy the defining bracket relations of $\g$. But since $\sigma(X_{\pm i})$ and $\sigma(d_j)$ in $\sov$ satisfy the defining bracket relations of $\g$ (as $\sigma$ is a Lie algebra map), and $\pi:\sotv\longrightarrow\sov$ is a Lie algebra map mapping $\sigt(X_{\pm i}),\;\sigt(d_j)$ to $\sigma(X_{\pm i}),\;\sigma(d_j)$ in $\sov$, we can prove that $\sigt(X_{\pm i}),\;\sigt(d_j)$ also satisfy each defining bracket relation of $\g$ up to a constant because $ker(\pi)=\C$. We show that this constant is zero for each relation.

Let $\hh$ and $\hti$ be the cartan subalgebras of $\sov$ and $\sotv$ respectively so that $\hti=\pi^{-1}(\hh)$. Then the constant term in $\sigt(X_{\pm i})$, when expressed in standard basis of $\sotv$, is zero and $\sigt(d_j)\in\hti$ as $\sigma(d_j)\in\hh$. Set $\sigt(\alpha_i^\vee)=[\sigt(X_i),\sigt(X_{-i})]$ for $i=1,\cdots,n$. Then clearly, $\sigt(\alpha_i^\vee)\in\hti$ for all $i$. This defines $\sigt$ from $\h$ into $\hti$ (may not be injective). For any $H\in\h$ we can easily conclude that $[\sigt(H),\sigt(X_{\pm i}]-\alpha_i(H)\sigt(X_{\pm i})$ is a constant. This constant must be zero because the constant term in $\sigt(X_{\pm i})$ is zero and $\sigt(H)\in\hti$. Now for $i\neq j$, $X:=[\sigt(X_i),\sigt(X_{-j})]$ must be constant and for all $H\in\h$, $\sigt(H)$ acts diagonally on $X$ with eigenvalue $(\alpha_i-\alpha_j)(H)$. This implies that $X=0$. Similarly, the generators satisfy the last bracket relation also (see \S\ref{Bkground}).\qedhere

\subsection{Proof of Propositions \ref{p1}-\ref{p2}}
First, we compute the character of $\Spin(V)$ as a
$\sotv$-representation. As in \S\ref{Spin}, let $V$ be the vector
space with basis $\left\{e_i:i\in I\right\}$. We can check that
$\hti:=\bigoplus_{i\in\;I^+}\C e_ie_{-i}\bigoplus\C 1$ is a Cartan
subalgebra of $\sotv$. Consider the dual basis of
$\left\{1,\frac{e_je_{-j}}{2}:j\in\;I^+\right\}$ i.e.
$\Lt_i\in\hti^{\ast}$ \;defined as :
$$\begin{array}{clr}
                                      \Lt_0(1)=1 & \Lt_0(\frac{e_je_{-j}}{2})=0            &  j\in I^+\;\\
                                      \Lt_i(1)=0 & \Lt_i(\frac{e_je_{-j}}{2})=\delta_{i,j} &  i,j\in I^+.
                                      \end{array}$$
A basis of weight vectors of $\wedge^{\bullet}V^+$ is
$\left\{e_J:J\subset\;I^+\right\}$, where $e_J:=e_{j_1}\wedge
e_{j_2}\wedge\cdots\wedge e_{j_k}$, for
$J=\left\{j_1,j_2,\cdots,j_k\right\}$. Observe that :
$$\begin{array}{rcl}
                   1(e_J)&=&e_J\\
 \frac{e_je_{-j}}{2}(e_J)&=&\left\{\begin{array}{ccc}
                                            e_J &\text{if}& j\in J \\
                                            0   &\text{if}& j\notin J.
                                           \end{array}\right. \;
 \end{array}$$
So, $e_J$ has the weight $\Lt_0+\sum\limits_{j\in J}\Lt_j$.
Therefore,
$$\begin{array}{ccc}
     \Char\Spin(V)=\Char(\wedge^{\bullet} V^+)&=&\sum\limits_{J\subset\;I^+}e^{(\Lt_0+\sum\limits_{j\in J}\Lt_j)},\\
                            &=&e^{\Lt_0}\prod\limits_{i\in\;I^+}(1+e^{\Lt_i}).
\end{array}$$
as $\sotv$-representation. Due to the map, $\g\stackrel{\sigt}{\longrightarrow}\sotv$, (described in \S\ref{Spin} Lemma \ref{l1}), $\Spin(V)$ becomes a $\g$-representation. Then, $\Lt_i\circ\sigt,\;i\in (I^+\cup\left\{0\right\})$, are the weights of $\Spin(V)$ as a $\g$-representation. Due to the commutativity of the diagram in lemma \ref{l1} we can show that $\Lt_i\circ\sigt=-L_i\circ\sigma,\;i\in\;I^+$, where $L_i$'s are the weights of the defining representation $V$ of $\sov$. Let $\beta_i:=L_i\circ\sigma,\;i\in\;I^+$ and $\Lambda:=\Lt_0\circ\sigt$. Thus, as a $\g$-representation,
$$\Char\Spin(V)=e^{\Lambda}\prod\limits_{i\in\;I^+}(1+e^{-\beta_i})\;.$$
Without loss of generality, we may assume $\beta_i(d)\geq 0$ for all $i\in I^+$. When $V$ is finite dimensional (see \cite{P}), $\Lambda=\frac{1}{2}\sum_{i\in I^+}\beta_i.$

\subsubsection{Proof of Proposition \ref{p1}.}
Now we prove parts (1)\;-\;(7) of Proposition \ref{p1}.
\begin{proof}[Proof of Proposition \ref{p1}(1)]
We show the following:\\
$V$ is $d$-finite\\
$\Rightarrow$ $\Spin(V)$ is $d$-finite and the set $\left\{\gamma(d):\gamma \text{\;is a weight of}\; \Spin(V)\right\}$ is bounded above.\\
$\Rightarrow$ $\Spin(V)\in\Ow$.

Let $V$ be $d$-finite. Result is obvious if $V$ is finite dimensional. So, let $V$ be a infinite dimensional so that $I=\Z$, $I^+=\Z_{>0}$ and the set of positive weights are $\left\{\beta_i:i\in\Z_{>0}\right\}$. Any weight of $\Spin(V)$ is of the form :\;$\Lambda-a$ where $a=\sum_{i\in\Z_{>0}}a_i\beta_i$ for $a_i=0\;\text{or}\;1$ for $i\in\Z_{>0}$ where $a_i=0$ for all but finitely many $i$'s. Let's call such a sequence, $(a_i)_{i\in\Z_{>0}}$, an $(a)$-sequence.

For $d$-finiteness of $\Spin(V)$, it's enough to show that the above character when restricted to $\C d$ has finite coefficients. This is equivalent to: For each $N$, there are finitely many $(a)$-sequences such that $(\Lambda - a)(d)=N$. Define
$$
L_k:=\left\{i\in\Z_{>0}:\beta_i(d)=k\right\},
$$
$L_k$ is a finite set due to $d$-finiteness of $V$.
\begin{eqnarray}
(\Lambda - a)(d)&=&N\nonumber\\
\Rightarrow\hspace{5pt} \Lambda(d)-N&=&\sum_{i\in\Z_{>0}}a_i\beta_i(d)\nonumber\\
\Rightarrow\hspace{5pt} \Lambda(d)-N&=&\sum\limits_{k\in\Z_{\geq 0}}\left(\sum\limits_{\beta_i(d)=k}a_i\right)k. \nonumber\\
\Rightarrow\hspace{5pt} \Lambda(d)-N&=&\sum\limits_{k\in\Z_{\geq 0}}\left(\sum\limits_{i\in L_k}a_i\right)k.\nonumber\\
\Rightarrow\hspace{5pt} \Lambda(d)-N&=&\sum\limits_{k\in\Z_{\geq 0}}b_k k,\label{part}
\end{eqnarray}
where $b_k:=\sum_{i\in L_k}a_i$. In the above equation $k\in\Z_{\geq 0}$ because $\beta_i(d)\geq 0$ for all $i$ and the sum is a finite sum as $a_i\neq 0$ only for finitely many $i$'s. Clearly, $b_k$ is finite. Thus each $(a)$-sequence $(a_i)_{i\in\Z_{>0}}$ satisfying equation\;(\ref{part}) gives a non-negative integral partition of $\Lambda(d)-N$ where each non-negative integer is repeated $b_k$ times. Conversely, for every such partition given by $(b_k)_{k\in\Z_{\geq 0}}$ with all but finite number of $b_k$ to be zero, there exists only finitey many $(a)$-sequences, $(a_i)_{i\in\Z_{>0}}$ such that $b_k=\sum_{i\in L_k}a_i$. Since there are finitely many such partitions of $\Lambda(d)-N$, there are finitely many $(a)$-sequences satisfying equation (\ref{part}). Thus $\Spin(V)$ is $d$-finite.

Finally, $\left\{\gamma(d):\gamma \text{\;is a weight of}\; \Spin(V)\right\}$ is bounded above by $\Lambda(d)$ as $\gamma=\Lambda-a$ for $a=\sum_{i\in\Z_{>0}}a_i\beta_i$ and $\beta_i(d)\geq 0$. That proves the first implication.

To prove $\Spin(V)\in\Ow$, we show that if $W$ is a $d$-finite representation, such that $\left\{\gamma(d):\gamma \text{\;is a weight of}\; W\right\}$ is bounded above then $W\in\Ow$. First of all weight spaces of $W$ are finite dimensional by $d$-finiteness of $W$. So, let $\Pc(W)$ denote the set of all weights of $W$. For $\beta\in\Pc(W)$, let $\gamma\in S_\beta:=\left\{\gamma\in\Pc(W):\beta\leq\gamma\right\}$. That is, $\gamma-\beta=\sum c_\alpha\alpha$, for some $c_\alpha\in\Z_{\geq 0}$ where the sum is over simple positive roots of $\g$. This implies $\gamma(d)-\beta(d)\geq 0$ as $\alpha(d)>0$ by definition of $d$. As $\gamma(d)$ is bounded above, $K\leq\gamma(d)\leq N$ where $K=\beta(d)$. So $S_\beta$ is finite, otherwise $\bigoplus_{K\leq\gamma(d)\leq N}W^{(\gamma)}$ will be infinite dimensional contradicting the $d$-finiteness of $W$ (see \S\ref{FT}). Now the set of all maximal weights of the non-empty finite set $S_\beta$ is non-empty and intersects $M_W$ non-trivially. Thus $W\in\Ow$.
\qedhere
\end{proof}
\begin{proof}[Proof of Proposition \ref{p1}(2)]

The result is obvious if $V$ is finite dimensional. Assume that $V$ is infinite dimensional. Let $X$ be a positive or negative root vector of $\g$. We will show that $X$ acts locally nilpotently on $\Spin(V)$ if it acts locally nilpotently on $\sov$ (see \S\ref{Spin}).

First assume $X$ to be a positive root vector. We denote the matrix of the action of $X$ on the representation $V$ given by the map $\g\longrightarrow\sov$ by $X$ only. Recall, that $\sov$ is defined with respect to a polarization of $V=V^+\oplus V^-$. Fix an ordered basis $\left\{\cdots>e_2>e_1>e_{-1}>e_{-2}>\cdots\right\}$ of $V$, where $V^{\pm}=\bigoplus_{i\in\Z^{\pm}}\C e_i$. We may assume that $X$ is a strictly upper triangular matrix in $\sov$ with respect to above basis.  We can write $X=Z+F$ such that $Z(V^{\pm})\subset V^{\pm}$, $F(V^{\pm})\subset V^{\mp}$.  Then by definition of $\sov$, $F(V)$ is finite dimensional. Since $X$ is a upper triangular, $F(V^+)=\left\{0\right\}$. Thus, let $F:=\sum_{r,t}b_{r,t}Z_{r,-t}$, a finite sum, where $\left\{Z_{i,j}\right\}$ forms a basis of $\sov$ defined in \S\ref{Spin}. Referring to Lemma \ref{l1} in \S\ref{GenCase}, the image of $X$, say $\Xt$, in $\sotv$ can be written as: $\Xt=\Zt+\Ft$, where $\Zt=\phi(Z)$ and $\Ft=\phi(F)$ in $\sotv$ (see \S\ref{GenCase}). Here $\Ft=-1/2\sum_{r,t}b_{r,t}e_{-t}e_{-r}$. Now action of $\Zt$ and $\Ft$ on $\Spin(V):=\wedge^\bullet V^+$ is defined as:
$$
\Zt(e_{i_1}\wedge e_{i_2}\wedge\cdots\wedge e_{i_k}):=\Zh e_{i_1}\wedge e_{i_2}\wedge\cdots\wedge e_{i_k}\;\;\;+\;\;\;e_{i_1}\wedge \Zh e_{i_2}\wedge\cdots\wedge e_{i_k}\;\;\;+\;\;\;\cdots
$$
where $\Zh$ denotes the transpose of $Z$. Thus,
$$
\Zt^p(e_{i_1}\wedge e_{i_2}\wedge\cdots\wedge e_{i_k})=\sum\limits_{p_1+\cdots+p_k=p}\frac{p!}{p_1!\cdots p_k!}\Zh^{p_1}e_{i_1}\wedge \Zh^{p_2}e_{i_2}\wedge\cdots\wedge\Zh^{p_k} e_{i_k}\;.
$$
and
$$\Ft(e_{i_1}\wedge\cdots\wedge e_r\wedge\cdots\wedge e_t\wedge\cdots\wedge e_{i_k}):=\sum\limits_{r,t}c_{r,t}e_{i_1}\wedge\cdots\wedge \hat{e}_r\wedge\cdots\wedge \hat{e}_t\wedge\cdots\wedge e_{i_k}\;.
$$
where $\hat{e}_i$ denotes the absence of $e_i$ and $c_{r,t}=\pm 2b_{r,t}$ depending on positions of $e_r$ and $e_t$\;. Also, $c_{r,t}=0$ if $e_r$ and/or $e_t$ do not occur in the wedge. Let
$$
V_m:=\C e_1\oplus\C e_2\oplus\cdots\oplus\C e_m \;.
$$
for $m\in\Z_{>0}$ and let
$$
\Zh^\bullet V_m:=\bigoplus\limits_{k\in\Z_{\geq 0}}\Zh^k(V_m)\;.
$$
By definition of $Z$, above is finite direct sum as $\Zh^k(V_m)=0$ for large $k$, say $q$.

\begin{lemma} \label{l2}
{\it For any given $K\in\Z_{>0}$ and a matrix $Z$ in $\sov$ satisfying $Z(V^{\pm})\subset V^{\pm}$ and $Z^q(V_m)=0$ for some $q$, $\Zt^P[\wedge^k(\Zh^\bullet V_m)]=0$ for all $k\leq K$ and $P=K(q-1)+1$.}
\end{lemma}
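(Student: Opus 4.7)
The plan is to reduce the statement to a one-line pigeonhole count on exponents, after extending the multinomial formula for $\Zt^P$ already displayed in the excerpt from basis wedges to arbitrary wedges. Concretely, since $\Zt$ acts as a graded derivation of $\wedge^{\bullet}V^+$ via $\Zh$ in each slot, the Leibniz rule gives
$$
\Zt^P(u_1\wedge\cdots\wedge u_k)=\sum_{p_1+\cdots+p_k=P}\frac{P!}{p_1!\cdots p_k!}\,\Zh^{p_1}u_1\wedge\cdots\wedge\Zh^{p_k}u_k
$$
for \emph{any} $u_1,\ldots,u_k\in V^+$; the displayed version for $u_j=e_{i_j}$ extends by linearity in each slot.

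Next, by multilinearity of the wedge and the decomposition $\Zh^{\bullet}V_m=\bigoplus_{a=0}^{q-1}\Zh^a V_m$ (the sum being finite by the nilpotence hypothesis $\Zh^q V_m=0$, which is the very condition singling out $q$ in the paragraph just before the lemma), it suffices to verify the vanishing on a single pure wedge $v_1\wedge\cdots\wedge v_k$ in which each factor has the form $v_j=\Zh^{a_j}w_j$ with $w_j\in V_m$ and $0\leq a_j\leq q-1$.

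Substituting these vectors into the multinomial expansion, the $(p_1,\ldots,p_k)$-term becomes a scalar multiple of $\Zh^{p_1+a_1}w_1\wedge\cdots\wedge\Zh^{p_k+a_k}w_k$. Nilpotence forces $p_j+a_j\leq q-1$ for every $j$ whose slot is nonzero, so in particular $p_j\leq q-1$ for each $j$. Summing and using $k\leq K$,
$$
P=\sum_{j=1}^{k}p_j\;\leq\;k(q-1)\;\leq\;K(q-1),
$$
which contradicts the choice $P=K(q-1)+1$. Hence every term in the expansion vanishes and $\Zt^P\bigl[\wedge^k(\Zh^{\bullet}V_m)\bigr]=0$.

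I do not expect a substantive obstacle; the entire content of the argument is the pigeonhole bound on $\sum_j p_j$. The only step that deserves a moment of care is the passage from the excerpt's formula on basis wedges to the same formula on wedges of general vectors $\Zh^{a_j}w_j$, but this is just the standard graded-derivation property of $\Zt$.
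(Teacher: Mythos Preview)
Your argument is correct and is essentially the same as the paper's: both expand $\Zt^P$ via the multinomial (Leibniz) formula and observe that a nonvanishing term would force every exponent $p_j\le q-1$, contradicting $P=K(q-1)+1$. The only cosmetic difference is that you explicitly reduce to wedges of homogeneous vectors $\Zh^{a_j}w_j$, whereas the paper works directly with an arbitrary element of $\Zh^\bullet V_m$ and uses $\Zh^{q_s}(\Zh^\bullet V_m)=0$ for $q_s\ge q$; the pigeonhole count is identical.
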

\begin{proof}
Let $a:=e_{i_1}\wedge e_{i_2}\wedge\cdots\wedge e_{i_k}\;\in\wedge^{k}(\Zh^\bullet V_m)$.
$$
\Zt^P(a)=\sum\limits_{q_1+\cdots+q_k=P}\frac{P!}{q_1!\cdots q_k!}\Zh^{q_1}e_{i_1}\wedge \Zh^{q_2}e_{i_2}\wedge\cdots\wedge\Zh^{q_k} e_{i_k}.
$$
Since, for $1\leq s\leq k$, $e_{i_s}\in\Zh^\bullet V_m$ $\Rightarrow$ $\Zh^{q_s}(e_{i_s})\in\Zh^\bullet(\Zh^{q_s} V_m)$. Now, if $q_s\leq q-1$ for all $s$ then $P=\sum_{s=1}^k q_s\leq k(q-1)\leq K(q-1)=P-1$. So, $P\leq P-1$ $\Rightarrow\Leftarrow$. Thus, $q_s\geq q$ for some $s$. And, $\Zh^{q_s}(e_{i_s})\in\Zh^\bullet(\Zh^{q_s} V_m)=0$ for some $s$ meaning $\Zt^P(a)=0$ for all $a\in\wedge^k(\Zh^\bullet V_m)$ since $a$ is arbitrary. Hence, $\Zt^P[\wedge^k(\Zh^\bullet V_m)]=0$ which is Lemma \ref{l2}.\qedhere
\end{proof}
We will prove that for a given $s\in\Z_{>0}$, we can find $N$, depending on $s$ such that:
\begin{eqnarray}
    (\Zt+\Ft)^N(\wedge^s V_m)&=&0,\label{Toprove}
\end{eqnarray}
which will prove that $\Xt=\Zt+\Ft$ acts nilpotently on $\Spin(V)=\wedge^\bullet V^+$.

Set $l:=\left\lfloor \frac{s}{2}\right\rfloor$, $P:=s(q-1)+1$ and $N:=(l+1)(P-1)+l+1$. Consider:
$$
(\Zt+\Ft)^N(\wedge^s V_m)=\bigoplus_{k=0,N}\left(\bigoplus_{\sum P_i=N-k}\Zt^{P_1}\Ft\Zt^{P_2}\cdots\Ft\Zt^{P_{k+1}}\right)(\wedge^s V_m).
$$
Verify that for any $p,k\in\Z_{>0}$, $\Zt^p(\wedge^k(\Zh^\bullet V_m))\subset \wedge^k(\Zh^\bullet V_m)$ and  $\Ft(\wedge^k(\Zh^\bullet V_m))\subset \wedge^{k-2}(\Zh^\bullet V_m)$. Thus, $$U:=\Zt^{P_1}\Ft\Zt^{P_2}\cdots\Ft\Zt^{P_{k+1}}(\wedge^s V_m)\subset \wedge^{s-2k}(\Zh^\bullet V_m),$$ where right hand is defined as zero for $s<2k$. Let $k\leq \left\lfloor \frac{s}{2}\right\rfloor=l$. Since $\sum_{i}P_i=N-k$, $P_i\geq P$ for some $i$ for a similar reason as in Lemma \ref{l2}. Let $P_j\geq P$. Now, $$\Ft\Zt^{P_{j+1}}\cdots\Ft\Zt^{P_{k+1}}(\wedge^s V_m)\subset\wedge^{s-2(k-j+1)}(\Zh^\bullet V_m).$$ By Lemma \ref{l2}, $$\Zh^{P_j}(\Ft\Zt^{P_{j+1}}\cdots\Ft\Zt^{P_{k+1}})(\wedge^s V_m)=0$$
$$\Rightarrow\;\;U:=\Zt^{P_1}\Ft\cdots\Ft\Zh^{P_j}(\Ft\Zt^{P_{j+1}}\cdots\Ft\Zt^{P_{k+1}})(\wedge^s V_m)=0$$
$$\Rightarrow\;\;(\Zt+\Ft)^N(\wedge^s V_m)=0$$
which proves $\Xt=\Zt+\Ft$ is locally nilpotent on $\Spin(V)=\wedge^\bullet V^+$.

Note that we have not used the integrability of $V$ yet. Now, for $X$ a negative root vector of $\g$, whose matrix corresponds to a strictly lower triangular matrix, the proof will require the integrability of $V$. Most of the analysis is same but is significantly different at few places.

As before $X=Z+F$. This time $F(V^-)=\left\{0\right\}$, $F:=\sum_{r,t}b_{r,t}Z_{-r,t}$ and $\Ft=-1/2\sum_{r,t}b_{r,t}e_{t}e_{r}$.
$$\Ft(e_{i_1}\wedge e_2\wedge\cdots\wedge e_{i_k}):=-\frac{1}{2}\sum\limits_{r,t}b_{r,t}\;e_t\wedge e_r\wedge e_{i_1}\wedge\cdots\wedge e_{i_k}\;.
$$
It is enough to prove equation (\ref{Toprove}) for all $m\geq\text{Max}\left\{r,t:\;b_{r,t}\neq 0\right\}$. In the previous case, we got for free the condition that $\Zh^q(V_m)=0$ for some $q$, beacuse $\Zh$ was a strictly lower triangular matrix with $\Zh(V^{\pm})\subset V^{\pm}$. In this case, we use the fact that $X$ is a locally nilpotent matrix. So, there exists a $q$ such that $(Z+F)^q(V_m^-)=0$ where
$V^-_m:=\C e_{-1}\oplus\C e_{-2}\oplus\cdots\oplus\C e_{-m}$\;. Since $X$ is skew-symmetric, this is equivalent to $(\Zh+\Fh)^q(V_m)=0$ where $\Fh$ is the transpose of $F$. Since, $F(V^-)=\left\{0\right\}$, $\Fh(V^+)=\left\{0\right\}$. In particular, $\Fh(V_m)=\left\{0\right\}$ which leads to $\Zh^q(V_m)=\left\{0\right\}$ as required.

Let $\Zh^\bullet V_m\subset V_{R}$, for some $R$ depending on $m$. We modify $l:=\left\lfloor \frac{R-s}{2}\right\rfloor$ $P:=(s+2l)(q-1)+1$ to define $N:=(l+1)(P-1)+l+1$ as before. Also, $$\Ft(\wedge^k(\Zh^\bullet V_m))\subset \wedge^{k+2}(\Zh^\bullet V_m)$$ for all $k\in\Z_{>0}$ so that $$U:=\Zt^{P_1}\Ft\Zt^{P_2}\cdots\Ft\Zt^{P_{k+1}}(\wedge^s V_m)\subset \wedge^{s+2k}(\Zh^\bullet V_m),$$ and is zero if $s+2k>R$ and the proof goes through.

The fact about $\Spin_0(V)$ follows from the character formula given before the proof of Proposition \ref{p1}.\qedhere
\end{proof}

\begin{proof}[Proof of Proposition \ref{p1}(3)]
Let $\left\{e_i:i\in I_1\right\}$ be the chosen basis of weight
vectors of $V_1$, and $\left\{e^{'}_i:i\in I_2\right\}$ of $V_2$. If
at least one of $m_1$ or $m_2$ is even, $(V_1\oplus
V_2)^+=V_1^+\oplus V_2^+$. Define the map:
$$\begin{array}{rcl}
\Spin(V_1\oplus V_2)&\stackrel{p}{\longrightarrow}&\Spin(V_1)\otimes\Spin(V_2)\\
e_J\wedge e^{'}_K&\longmapsto& e_J\otimes e^{'}_K\\
1&\longmapsto& 1\otimes 1
\end{array}$$
for $J\subset I_1^+$ and $K\subset I_2^+$, where at least one of them is not empty and $e_{\left\{\right\}}:=1$ and $e^{'}_{\left\{\right\}}:=1$. Here, $p$ is an isomorphism of $\sot (V_1)\oplus\sot (V_2)$-modules as it can be easily verified that $X\circ p=p\circ X$ for $X\in\sot (V_1)$ and $X\in\sot (V_2)$.

Now let both $m_1$ and $m_2$ be odd, so that the chosen bases of $V_1$ and $V_2$ are $\left\{\cdots,e_2,e_1,e_0,e_{-1},e_{-2},\cdots\right\}$ and
 $\left\{\cdots,e{'}_2,e^{'}_1,e^{'}_0,e^{'}_{-1},e^{'}_{-2},\cdots\right\}$ respectively. Define $u:=(e_0+ie_0^{'})/\sqrt{2}$, $v:=(e_0-ie_0^{'})/\sqrt{2}$ so that $u,v$ are paired non-degenerately with respect to the bilnear form. Now, $(V_1\oplus V_2)^+=V_1^{+}\oplus\ V_2^+\oplus\C u$. Define the map,
$$\begin{array}{rcl}
\Spin(V_1)\otimes\Spin(V_2)&\stackrel{p}{\longrightarrow}&\Spin^{\text{even}}(V_1\oplus V_2)\\
e_{J_1}\otimes e^{'}_{J_2}&\longmapsto& \frac{(-1)^{t_1t_2}}{i^{t_2}}e_{J_1}\wedge e^{'}_{J_2}\wedge(1-t+t\frac{u}{\sqrt{2}})\\
1\otimes 1&\longmapsto& 1
\end{array}$$
where, $t_k:=|J_{k}|$mod\;$2$,\;$k=1,2$\; and $t:=(|J_1|+|J_2|)$mod\;$2$ and $i:=\sqrt{-1}$. Again, $p$ is an isomorphism of $\sot (V_1)\oplus\sot (V_2)$-modules as it can be easily verified that $X\circ p=p\circ X$ for $X\in\sot (V_1)$ and $X\in\sot (V_2)$. A similar isomorphism can be given for $\Spin^{\text{odd}}(V_1\oplus V_2)$.
\end{proof}

\begin{proof}[Proof of Proposition \ref{p1}(4)] $\Spin_0(V_1\oplus V_2)\;\cong\;\Spin_0(V_1)\otimes\Spin_0(V_2)$.\\
Using Proposition \ref{p1}(3), let at least one of $m_1$ or $m_2$ is even say, $m_1$. So, let $m_1=2k_1$ and $m_2=2k_2+\epsilon$, where $\epsilon=0\;\text{ or}\; 1$. Now using Proposition \ref{p1}(2) we get:
$$2^{k_1+k_2}\Spin_0(V_1\oplus V_2)\cong 2^{k_1}\Spin_0(V_1)\otimes 2^{k_2}\Spin_0(V_2),$$
which gives the desired result.

Now let $m_1=2k_1+1$ and $m_2=2k_2+1$ then as before we get:
$$2^{k_1+k_2+1-1}\Spin_0(V_1\oplus V_2)\cong 2^{k_1}\Spin_0(V_1)\otimes 2^{k_2}\Spin_0(V_2),$$
which again gives the desired result.\qedhere
\end{proof}
\begin{remark}
Proposition \ref{p1}(4) is also true for an {\it infinite} direct sum $V:=V_1\oplus V_2\oplus\cdots$ if $V$ is $d$-finite. Also, we define the infinite tensor product on the right hand side as: Let
$$
U_k:=\Spin_0(V_1)\;\otimes\;\cdots\;\otimes\;\Spin_0(V_k)\;\otimes\;\Spin_0(V_{k+1}\oplus V_{k+2}\oplus\cdots).
$$
There is an isomorphism $U_k\to U_{k+1}$ by Proposition \ref{p1}(4). The infinite tensor product, $\Spin_0(V_1)\otimes\Spin_0(V_2)\otimes\cdots,$
is defined as direct limit of maps $U_k\to U_{k+1}$.
\end{remark}
\begin{proof}[Proof of Proposition \ref{p1}(5)]
Let $W$ be root finite. So, there exists only
finitely many weights of $W$ outside the root cone $C$ defined in
\S\ref{Props} and $W$ has finite dimensional weight spaces. Hence,
for $d$-finiteness of $W$, it is enough to prove that the sets
$S_k^\pm:=\left\{\beta\in\Pc\cap
C^\pm:\beta(d)=k\right\}\supset\left\{\beta\in\Pc(W)\cap
C^\pm:\beta(d)=k\right\}$, is finite for each $k\in\Z^\pm$, where
$\Pc$ denotes the weight lattice of $\g$, $\Pc(W)$ are the weights
of $W$ and $C^\pm:=\left\{\sum_{j=1}^n
c_j\alpha_j\;:\;c_j\in\R^\pm\cup\left\{0\right\}\right\}$.

Let $\Pc_R$ be the root lattice,
$\Pc/\Pc_R=\left\{\Pc_1,\Pc_2,\cdots\right\}$. There exists finitely
many $i$'s for which $\Pc_i\cap C^+\neq\left\{\right\}$, say
$i=1,\cdots,m$. For $1\leq i\leq m$, let $p_i\in\Pc_i\cap C^+$ be a
coset representative of $\Pc_i$ such that for each
$\beta\in\Pc_i\cap C^+$ can be written as $\beta=p_i+\sum_{j=1}^n
c_j\alpha_j$ with $c_j\in\Z_{\geq 0}$. Then
$$\left\{\beta\in\Pc_i\cap C^+: \beta(d)=k\right\}=\left\{p_i+\sum_{j=1}^n c_j\alpha_j : p_i(d)+\sum_{j=1}^n c_j\alpha_j(d)=k, \;c_j\in\Z_{\geq 0}\right\},$$
which is clearly a finite set using the definition of $d$. This
leads to finiteness of $S_k^+$. Similarly we prove that $S_k^-$ is
finite.\qedhere
\end{proof}

\begin{proof}[Proof of Proposition \ref{p1}(6)]

Let $V$ be $d$-finite and orthogonal. To prove:
$$
V \;\text{is root-finite}\Leftrightarrow\Spin(V)\in\O .
$$
($\Rightarrow$)

\noindent Use the character formula given before the proof of
Proposition \ref{p1},
$$\Char\Spin(V)=e^{\Lambda}\prod\limits_{i\in\;I^+}(1+e^{-\beta_i}).$$
Refer to Proof of Proposition \ref{p1}(5) for the definitions of $C,
C^+,C^-,\Pc,\PR,\;p_j,\Pc_j, j=1\cdots m$ and define the sets:
$$I_1:=\left\{i\in I^+:\beta_i\in\Pc\cap C^+\right\},$$
$$I_2:=\left\{i\in I^+:\beta_i\in\Pc- C\right\}.$$
Note that $I_1\cup I_2=I^+$ as $\beta_i\notin C^-$ because by assumption $\beta_i(d)>0$ for all $i\in I^+$. By root finiteness, $I_2$ is a finite set.

Now let $M$ denote the set of all minimal weights (in the root order defined in \S\ref{Props}) of the finite set 
$\left\{\sum_{i\in J_2}\beta_i:J_2\subset I_2\right\}.$
Thus, $M$ is finite. Define the set of weights in $\Pc$:
$$T:=\left\{\Lambda-p_j-\gamma\;\;:\;\; j=1\cdots m,\;\; \gamma\in M\right\}, $$
which is a finite set and $p_j$'s are coset representatives defined earlier. We will show that elements of $T$ ``cover" all weights of $\Spin(V)$ in the root order.

Due to the character formula above, any weight of $\Spin(V)$ is of the form $\Lambda-\sum_{i\in J}\beta_i=\Lambda-\sum_{i\in J_1}\beta_i-\sum_{i\in J_2}\beta_i,$ for some set $J\subset I^+$ and $J_1:=J\cap I_1$ and $J_2:=J\cap I_2$. Let $\lambda$ be any such weight of $\Spin(V)$. For the sum $\sum_{i\in J_1}\beta_i$ in $\lambda$, each $\beta_i$ can be written as $p_j+b_j$ for some $j$ and $b_j\in\Pc_R\cap C^+$. Now any finite sum of coset representatives $p_j$'s can be written as $p_k+a$ for $a\in\Pc_R\cap C^+$. So $\sum_{i\in J_1}\beta_i=p_k+a+\sum_{j=1}^m b_j=p_k+b$ for $b\in\Pc_R\cap C^+$. Write $c:=\sum_{i\in J_2}\beta_i$. Therefore, $\lambda=\Lambda-(p_k+b)-c$. Choose an element $\gamma\in M$ such that $\gamma\leq c$ so that $c-\gamma\in\Pc_R\cap C^+$. Set $t:=\Lambda-p_k-\gamma\in T$. Then $t-\lambda=b+c-\gamma\in\Pc_R\cap C^+$ so that $\lambda\leq t$. This proves that $\Spin(V)\in\O$.
\\[.5em]
($\Rightarrow$)

\noindent Suppose $V$ is not root finite. Refer to the character formula given at the beginning of the Proof of the Proposition \ref{p1}. Since $V$ is not root finite, there exists an infinite set $J$ such that $\beta_i\in\Pc-C$ for all $i\in J$ (See definitions of $\Pc$ and $C$ in Proof of Proposition \ref{p1}(5)).
Extend the collection of simple positive roots, $\left\{\alpha_k\right\}_{k=1}^n$, of $\g$ to a basis of $\h^*$, say:
$$\left\{\alpha_1,\cdots,\alpha_n,\alpha_{n+1},\cdots,\alpha_P\right\}.$$
For $i\in J$, let $$\beta_i=\sum_{k=1}^nc_{i,k}\alpha_k=\beta_{i,1}+\beta_{i,2}$$
where $\beta_{i,1}$ lies in the real space spanned by $\left\{\alpha_1,\cdots,\alpha_n\right\}$ and $\beta_{i,2}$ lies in the real space spanned by $\left\{\alpha_{n+1},\cdots,\alpha_P\right\}$. Because $\beta_i\in\Pc-C$, for each $i\in J$, either $\beta_{i,2}\neq 0$ or $\beta_{i,2}=0$ and there exists a $k$ such that $1\leq k\leq n$ for which $c_{i,k}<0$.

Thus, we may find an infinite set $J_1\subset J$ and a $k$ such that if $n+1\leq k\leq P$ then $c_{i,k}$ is of same sign for all $i\in J_1$ and if $1\leq k\leq n$ then $c_{i,k}<0$ for all $i\in J_1$.

Define $\gamma_K:=\sum_{i\in K} \beta_i$ for $K$ a finite subset of $J_1$. Also, let $\lambda_K:=\Lambda-\gamma_K$. Then $\lambda_K$ is a weight of $\Spin(V)$ for each $K\subset J_1$. If $n+1\leq k\leq P$ then $\lambda_K$ when expressed in terms of above basis, will have coefficient of $\alpha_k$ unbounded (above or below) when $K$ varies over finite subsets of $J_1$. If $1\leq k\leq n$ then this coefficient will be unbounded above. Thus these weights of $\Spin(V)$ can not be bounded above in root order by finitely many weights from the weight lattice. Thus $\Spin(V)\notin\O$.\qedhere
\end{proof}
\begin{proof}[Proof of Proposition \ref{p1}(7)]  $V(\rho)\;\cong\;\Spin_0(\g)$.

\noindent For the distinguished element $d$ of $\g$, the character
formula given at the beginning of the proof of Proposition \ref{p1}
leads to:
$$\Char\Spin_0(\g)=e^\Lambda\prod_{\alpha\in R^+}(1+e^{-\alpha}),$$

\noindent where the positive roots $R^+$ correspond to the
$d$-positive weights $\left\{\alpha\in R:\alpha(d)>0\right\}$ of
adjoint representation $\g$. Also, $\Lambda=\sum_{i=1}^n
c_i\Lambda_i$, for $\Lambda_i$ the $i$th fundamental weight of $\g$.
The formula above is the character formula for $V(\rho)$ if $c_i=1$.

Restrict the adjoint representation to the 4-dimensional subalgebra
$\dd_i:=\mathfrak{s}_i\oplus\C d\subset\g$\,, where
$\mathfrak{s}_i\cong \sl_2(\C)$ corresponds to the simple root
$\alpha_i$\,and decompose the adjoint representation into  finite
dimensional $\dd_i$-orthogonal-irreducibles:
$\g\downarrow\cong\bigoplus_kV_k$. Here $\downarrow$ denotes the
restriction to $\dd_i$. Thus, by Proposition \ref{p1}(4):
$$\Spin_0(\g\downarrow)\cong\Spin_0(V_1)\otimes\Spin_0(V_2)\otimes\cdots\;.$$

\noindent Using the character formula for $\Spin_0$ for
finite-dimensional representations, \cite{P}, the distinguished
weight $\lambda_k$ of $\Spin_0(V_k)$ is the half sum of $d$-positive
weights of $V_k$. Consider a highest weight vector
$v=\sum_{\beta}a_\beta X_\beta$ of $V_k$ for $\beta$'s positive
roots of $\g$. If $\beta=\sum_\alpha b_\alpha\alpha$ for $\alpha$'s
simple roots of $\g$, then $\text{ad}(X_{-\alpha_i})^{l}(X_\beta$)
is a positive root vector or zero for all $l$ unless
$\beta=\alpha_i$.  This leads to $\lambda_k(\alpha_i^\vee)=0$ if
$v\notin\g^{(\alpha_i)}$ and $\lambda_k(\alpha_i^\vee)=1$ if
$v\in\g^{(\alpha_i)}$. Thus, $c_i=\sum_k\lambda_k(\alpha_i^\vee)=1$
for all $i$.\qedhere
\end{proof}

\subsubsection{Proof of Proposition \ref{p2}.}
Refer to the character formula given before the proof of Proposition \ref{p1} which leads to:
$$\Char\Spin_0(\g)=e^\Lambda\prod_{\beta(d)>0}(1+e^{-\beta}),$$
where we can write, $\Lambda=\sum_{i=1}^n c_i\Lambda_i$, for $\Lambda_i$ the $i$th fundamental weight of $\g$. Lets call the weights $\beta$ of $V$ such that $\beta(d)>0$ as $d$-positive weights of $V$. If $V$ is finite dimensional then by \cite{P}, $\Lambda=\sum_{\beta(d)>0}\frac{1}{2}m_\beta\beta$. When $V$ is infinite dimensional, this is an infinite sum but as in proof of Proposition \ref{p1}(7), we may restrict $V$ to $\dd_i:=\mathfrak{s}_i\oplus\C d\subset\g$ and decompose $V\hspace{-.3em}\downarrow\cong\bigoplus_kV_k$ into finite dimensional $\dd_i$-orthogonal-irreducibles $V_k$. Then the distinguished weight $\lambda_k$ of $\Spin_0(V_k)$ is the half sum of $d$-positive weights of $V_k$. But for a  weight $\beta$  of $V$ if both $\beta$ and $s_i(\beta)$ are $d$-positive, that is, $\beta(d)>0$ and $s_i\beta(d)>0$ they will not contribute to $\lambda_k(\alpha_i^\vee)$ because $\beta(\alpha_i^\vee)+s_i\beta(\alpha_i^\vee)=0$. Finally, by Proposition \ref{p1}(4), $c_i=\sum_k\lambda_k(\alpha_i^\vee)$, and the result follows.\qedhere
\subsection{Proof of Theorem \ref{t1}}
Proposition \ref{p1}(7), says:
$$\Spin_0(\gt)\cong V(\rhot)$$
We restrict the adjoint representation $\gt$ to $\g$ and apply
$\Spin_0$ with respect to $\g$. Using, Proposition \ref{p1}(4), we
get the following commutative diagram:
$$\begin{array}{ccc}
\gt&\stackrel{\Spin_0}{\longrightarrow}&V(\rhot)\\
\downarrow&                            &\downarrow\\
\g\oplus\p_1\oplus\p_2\oplus\cdots&\stackrel{\Spin_0}{\longrightarrow}&V(\rho)\otimes W_1\otimes W_2\otimes\cdots
\end{array}$$
where vertical arrows denote restriction. The diagram commutes for the following reason. Fix a $d$-finite, orthogonal $\gt$-representations $V$ (such as adjoint representation of $\gt$). Clearly, $\Spin$ commutes with restriction, $\downarrow^\gt_\g$ when acted on $V$. Express, $\Spin$ in terms of $\Spin_0$ using Proposition \ref{p1}(2). Since $\g\subset\gt$ is a $d$-embedding and $V$ is a $d$-finite representation, the non-zero $\gt$-weights of $V$ restrict to non-zero $\g$-weights. Thus the dimension of the zero weight space does not change upon restriction. Hence $\Spin_0$ also commutes with $\downarrow^\gt_\g$ when applied to $V$.

\subsection{Proof of Theorem \ref{t2}}

Let $\chi_\lambda:=\Char V(\lambda)$. For any $\chi=\sum_{\lambda\in
I}c_{\lambda}e^\lambda$, $I\subset{\mathcal P}$ the weight lattice,
define $\chi^{(2)}=\sum_{\lambda\in I}c_{\lambda}e^{2\lambda}$. Weyl
character formula for character of an irreducible
$\g$-representation with highest weight $\lambda$, says:
$$\chi_{\lambda}=\frac{A_{\lambda+\rho}}{A_{\rho}},$$
where the skew-symmetrizer, $A_\mu:=\sum_{w\in\W}\text{sign}(w)e^{w(\mu)}$ and $\W$ is Weyl group.
\begin{eqnarray}
\chi_{2\mut+\rhot}&=&\frac{A_{2(\mut+\rhot)}}{A_{\rhot}}\nonumber\\
                  &=&\frac{A_{2(\mut+\rhot)}}{A_{2\rhot}}\frac{A_{2\rhot}}{A_{\rhot}}\;.\nonumber\\
\chi_{2\mut+\rhot}&=&\frac{A_{\mut+\rhot}^{(2)}}{A_{\rhot}^{(2)}}\frac{A_{2\rhot}}{A_{\rhot}}\nonumber\\
\text{Thus},\;\;\;\chi_{2\mut+\rhot}&=&\chi_\mut^{(2)}\;\;\chi_\rhot\label{chi2mutprhot}
\end{eqnarray}
Let $\chi_\mut\downarrow=\sum_i\chi_{\mu_i}$, where $\downarrow$ denotes restriction from $\gt$ to $\g$. Also, let $W:=W_1\otimes W_2\otimes\cdots$,\;where $W_k$'s are defined in Theorem \ref{t1}.
\begin{eqnarray*}
\chi_{2\mut+\rhot}\downarrow&=&\chi_\mut^{(2)}\downarrow\;\;\chi_\rhot\downarrow\\
&=&(\sum\limits_{i=1}^k\chi_{\mu_i}^{(2)})\;(\chi_{\rho}\;\Char(W))\;\;\;\;\;\text{by Theorem \ref{t1}}.\\
&=&(\sum\limits_{i=1}^k\chi_{\mu_i}^{(2)}\chi_{\rho})\;\;\Char(W)\\
\text{Therefore},\;\chi_{2\mut+\rhot}\downarrow&=&(\sum\limits_{i=1}^k\chi_{2\mu_i+\rho})\;\;\Char(W),
\end{eqnarray*}
where the last equality is due to the same reason as for equation
(\ref{chi2mutprhot}). This proves Theorem \ref{t2}. \qed
\subsection{Proof of Propositions \ref{p3}-\ref{p4}}
\subsubsection{Proof of Proposition \ref{p3}}
We will use $S_\lambda(x_1,x_2,\cdots,x_n)$ to denote irreducible characters for $\gt=\sl_n\C$ and $\chi_k$ for irreducible character of principal $\g=\sl_2\C$ with highest weight $k\varpi$. The restriction in this case corresponds to setting $x_i$, in $S_\lambda$ as $q^{(n+1-2i)/2}$ for $q=e^\alpha$, and $\alpha$ the root of $\g$. We fix the following notation:
$$S_\lambda\downarrow:=S_\lambda(q^{(n-1)/2},q^{(n-3)/2},\cdots,q^{-(n-1)/2}),$$
and
$$S_\lambda^{(2)}:=S_\lambda(x_1^2,x_2^2,\cdots,x_n^2).$$
Then character of adjoint representation of $\gt$ when restricted to $\g$ gives:
$$S_\theta\downarrow=\chi_2+\chi_4+\cdots\chi_{2(n-1)},$$
where $\theta$ is the highest root of $\gt$. We apply $\Spin_0$ on corresponding representations, and use the character formula for $\Spin_0$ from Proposition \ref{p2}. Then using Theorem \ref{t1} we obtain:
$$S_\rho\downarrow=\chi_1\cdot u_1\cdot u_2\cdots u_{n-2}$$
where $u_k:=\prod_{j=1}^{k+1}(q^{j/2}+q^{-j/2})$ and $\rho=(n-1,n-2,\cdots,1,0)$.
Now, Theorem \ref{t2}, leads to:
$$S_{2\mu+\rho}\downarrow=(S_\mu^{(2)}\downarrow\chi_1)\;\cdot u_1\cdot u_2\cdots u_{n-2}$$
where all $n-1$ factors on right hand side are characters of $\g=\sl_2\C$.
In order to translate this in language of principal specialization, we observe the following identity:
$$S_\lambda(1,q,q^2,\cdots,q^{n-1})=q^N S_\lambda\downarrow$$
where $N=\frac{n-1}{2}\sum\limits_{i=1}^n\lambda_i$, for $\lambda=(\lambda_1,\lambda_2,\cdots,\lambda_n)$.
Using the above formula, we obtain:
$$\begin{array}{l}
S_{2\mu+\rho}(1,q,q^2,\cdots,q^{n-1})\\[.5em]
\qquad=\left(\, q^{\binom{n}{3}}(1\sh+q)\,S_\mu(1,q^2,q^4,\ldots,q^{2n-2}\,)\right)\cdot w_1(q)\cdot w_2(q)\cdots w_{n-2}(q)\,,
\end{array}$$
where $w_k(q)=(1\sh+q)(1\sh+q^2)\cdots(1\sh+q^{k+1})$\,, where all $(n-1)$ factors on the right are symmetric unimodal as they have been obtained from $\sl_2\C$-characters.
\subsubsection{Proof of Proposition \ref{p4}}
We list all graph automorphisms of Dynkin diagrams, $\Dt$, of simple Lie algebras $\gt$ and the corresponding fixed subalgebras $\g$. We obtain the factorizations using Theorem \ref{t1}. Let $\theta$ and $\thetat$ be the highest roots of $\g$ and $\gt$ repectively so that $V(\theta)$ and $V(\thetat)$ denote their adjoint representations. Let $V(\thetat)\downarrow^\gt_\g\cong V(\theta)\oplus\p_1\oplus\p_2\oplus\cdots$, so that $V(\rhot)\downarrow^\gt_\g\cong V(\rho)\otimes W_1\otimes W_2\otimes\cdots$. We specify $\p_i$'s and $W_i$'s. The fact that $\Spin_0(\p_i)\cong W_i$ will be proved in Proposition \ref{p9}.
\begin{enumerate}
\item  Graph automorphisms of order 2.
   \begin{enumerate}
   \item $\gt=A_{2n-1}=\sl_{2n}\C$.\\
         $\g= C_{n}  =\spp_{2n}\C$. \\
         $i\stackrel{\phi}{\leftrightarrow}2n-i$.\\
         $V(\thetat)\downarrow^\gt_\g\cong V(\theta)\oplus V(\theta_s)$, where $\theta_s$ is the highest short root of           $\g$.\\
         $V(\rhot)\downarrow^\gt_\g\cong V(\rho)\otimes V(\rho_s).$
   \item $\gt=D_{n+1}=\so_{2n+2}\C$.\\
         $\g= B_{n}  =\so_{2n+1}\C$. \\
         $n\stackrel{\phi}{\leftrightarrow}n+1$, interchanges two forked nodes and fixes others.\\
         $V(\thetat)\downarrow^\gt_\g\cong V(\theta)\oplus V(\theta_s)$, where $\theta_s$ is the highest short root of           $\g$.\\
         $V(\rhot)\downarrow^\gt_\g\cong V(\rho)\otimes V(\rho_s).$
   \item $\gt=E_{6}$.\\
         $\g= F_{4}$. \\
         $1\stackrel{\phi}{\leftrightarrow}5$\;\;\;$2\stackrel{\phi}{\leftrightarrow}4$ and fixes others.(See page               53\;\cite{K} for ordering of nodes)\\
         $V(\thetat)\downarrow^\gt_\g\cong V(\theta)\oplus V(\theta_s)$, where $\theta_s$ is the highest short root of           $\g$.\\
         $V(\rhot)\downarrow^\gt_\g\cong V(\rho)\otimes V(\rho_s).$
   \item $\gt=A_{2n}=\sl_{2n+1}\C$.\\
         $\g= B_{n} =\so_{2n+1}\C$. \\
         $i\stackrel{\phi}{\leftrightarrow}2n+1-i$.\\
         $V(\thetat)\downarrow^\gt_\g\cong V(\theta)\oplus V(2\theta_s)$, where $\theta_s$ is the highest short root of          $\g$.\\
         $V(\rhot)\downarrow^\gt_\g\cong V(\rho)\otimes V(\rho+2\rho_s).$
   \end{enumerate}
\item Graph automorphism of order 3.
         \begin{enumerate}
   \item $\gt=D_{4}$.\\
         $\g= G_{2}$. \\
         $\phi$ cyclically pemutes the three outer nodes and fixes the middle node. \\
         $V(\thetat)\downarrow^\gt_\g\cong V(\theta)\oplus V(\theta_s)\oplus V(\theta_s)$, where $\theta_s$ is the
         highest short root of $\g$.\\
         $V(\rhot)\downarrow^\gt_\g\cong V(\rho)\otimes (V(\rho_s)+V(0))\otimes (V(\rho_s)+V(0)).$
         \end{enumerate}
\end{enumerate}
\subsection{Proof of Propositions \ref{p5}-\ref{p8}}
\subsubsection{Proof of Proposition \ref{p5}.}
We will prove Proposition \ref{p5} using Proposition \ref{p2}.
Recall that $-N<\beta(d_1),\theta(d_1)<N$ for all weights $\beta\in
T$ of $V$. Now all weights of $\Vh$ are of the form $k\delta+\beta$
with multiplicity $m_\beta$ for $k\in\Z$ and $\beta$ a weight of
$V$. Thus, for $\dh=Nd+d_1$, $(k\delta+\beta)(\dh)=kN+\beta(d_1)>0$
if and only if $k>0$ or $k=0,\;\beta(d_1)>0$ by the definition of
$N$. Proposition \ref{p2} leads to:
$$\Char\Spin_0(\Vh)
\ =\ e^{\Lambda}\!\! \prod\limits_{\beta(d_1)>0} (1\sh+e^{-\beta})^{m_{\beta}}\ \,
\mathop{\prod_{k>0}}_{\beta\in T} (1\sh+e^{-\beta-k\delta})^{m_{\beta}} \,,
$$
where $\Lambda=\sum_{i=0}^nc_i\Lambda_i$, $c_i$ as defined in Proposition \ref{p2}. It is easy to verify that for $i=1\;\cdots\; n$, $c_i=\sum\frac{1}{2}m_\beta\beta(\alpha_i^\vee)$ summing over weights $\beta$ of $V$ (as opposed to $\Vh$) such that $\beta(d_1)>0$ and $s_i(\beta)(d_1)<0$ because $s_i(k\delta+\beta)=k\delta+s_i(\beta)$. Futher, since $V$ is finite dimensional, we may drop the condition $s_i(\beta)(d_1)<0$ and sum over all weights $\beta$ of $V$ such that $\beta(d_1)>0$ because $\beta(\alpha_i^\vee)+s_i(\beta)(\alpha_i^\vee)=0$. Thus:
$$c_i=\sum_{\beta(d_1)>0}\frac{1}{2}m_\beta\beta(\alpha_i^\vee),\;\;i=1,\cdots,n$$

For $i=0$ case, $(k\delta+\beta)(\alpha_0^\vee)=-\beta(\theta^\vee)$ as $\alpha_0=K-\theta^\vee$. Also, $m_{k\delta\pm\beta}=m_\beta$. Therefore by replacing $\beta$ by $-\beta$, we get $c_0=\sum\frac{1}{2}m_{\beta}\beta(\theta^\vee)$, summing over all $k\in\Z$ and $\beta\in T$ such that $(k\delta+\beta)(\dh)>0$ and $s_0(k\delta+\beta)(\dh)<0$ which simplifies to the inequality:
\begin{eqnarray}
    \frac{\beta(d_1)}{N}<&k&<\;\;\;\beta(\theta^\vee)+\frac{s_\theta\beta(d_1)}{N}\label{Ineq}
\end{eqnarray}
where $s_\theta$ denotes the reflection corresponding to the highest
root $\theta$ of $\g$.

Now, by definition of $N$, $\frac{\beta(d_1)}{N}$ and
$\frac{s_\theta\beta(d_1)}{N}$ are fractions. So the inequality
(\ref{Ineq}) implies that $\beta(\theta^\vee)\geq 0$. Since $c_0$
involves summing $\frac{1}{2}m_\beta\beta(\theta^\vee)$ over
inequality (\ref{Ineq}), we may sum over $\beta(\theta^\vee)> 0$.
Consider the following cases:
$$
\begin{array}{cl}
    \text{Case 1}: \beta(\theta^\vee)>0, \;\beta(d_1)<0\;&(\Rightarrow \;\;s_\theta\beta(d_1)<0).\\
    \text{Case 2}: \beta(\theta^\vee)>0, \;\beta(d_1)>0\;&\text{and}\;\; s_\theta\beta(d_1)>0.\\
    \text{Case 3}: \beta(\theta^\vee)>0, \;\beta(d_1)>0\;&\text{and}\;\; s_\theta\beta(d_1)<0.
\end{array}
$$
In Case 1, inequality (\ref{Ineq}) $\Rightarrow 0\leq k\leq\beta(\theta^\vee)-1$. In Case 2, inequality (\ref{Ineq}) $\Rightarrow 1\leq k\leq\beta(\theta^\vee)$ and in Case 3, it implies $ 1\leq k\leq\beta(\theta^\vee)-1$. Thus, we get:
\begin{eqnarray*}
c_0&=&\sum_{\text{Case 1}}\frac{1}{2}m_\beta\beta(\theta^\vee)^2+\sum_{\text{Case 2}}\frac{1}{2}m_\beta\beta(\theta^\vee)^2+\sum_{\text{Case 3}}\frac{1}{2}m_\beta(\beta(\theta^\vee)^2-\beta(\theta^\vee))\\
&=&\sum_{\text{Case
1,2,3}}\frac{1}{2}m_\beta\beta(\theta^\vee)^2-\sum_{\text{Case
3}}\frac{1}{2}m_\beta\beta(\theta^\vee)\;.
\end{eqnarray*}
Now, in the first sum the union of the three cases leads to the case $\beta(\theta^\vee)>0$ and due to the square in the sum, it is equivalent to summing over $\beta(d_1)>0$. In the second sum over Case 3, we may drop $(\beta(\theta^\vee)>0)$ as it is implied by $\beta(d_1)>0\;\text{and}\;\; s_\theta\beta(d_1)<0$. Further, we may also drop $(s_\theta\beta(d_1)<0)$ for the same reason which led to the expression for $c_i,\; i=1\cdots n$. Therefore:
\begin{eqnarray*}
c_0&=&\sum_{\beta(d_1)>0}\frac{1}{2}m_\beta\beta(\theta^\vee)^2-\sum_{\beta(d_1)>0}\frac{1}{2}m_\beta\beta(\theta^\vee)\;,\\
c_i&=&\sum_{\beta(d_1)>0}\frac{1}{2}m_\beta\beta(\alpha_i^\vee)\;\;\;\;\;i=1,2\cdots
n\;,
\end{eqnarray*}
and $\Lambda=\sum_{i=0}^nc_i\Lambda_i$
which leads to Proposition \ref{p5} using $\Lambda_i=\varpi_i+a_i^\vee\Lambda_0$ and $\theta^\vee=\sum_{i=1}^na_i^\vee\alpha_i^\vee$. Here, $\varpi_i$ is the $i$th-fundamental weight of $\g$.
\subsubsection{Proof of Proposition \ref{p6}.}
Exactly same as that of Theorem \ref{t1} using property of $\Spin_0$ given in Proposition \ref{p1}(4).
\subsubsection{Proof of Proposition \ref{p7}.}
Direct consequence of Propositions \ref{p6} and \ref{p1}(7).
\subsubsection{Proof of Proposition \ref{p8}.}
This is just Theorem \ref{t2} for affine Lie algebras where we make use of Proposition \ref{p7}.
\subsection{Proof of Propositions \ref{p9}-\ref{p10}}
We will use $R,R_s$ and $R_l$ to denote the set of all roots, short roots (if any) and long roots (if any) of a finite-dimensional semi-simple Lie algebra $\g$ with distinguished element $d=\rho^\vee$, the sum of all fundamental co-weights of $\g$. Similarly $R^+,R_s^+$ and $R_l^+$ will denote the set of all positive roots, positive short roots and positive long roots.
\subsubsection{Proof of Proposition \ref{p9}.}
This classification was done by \cite{P}. Here we give proofs of the facts about $\Spin_0(V)$ for each of the 3 cases.
\begin{proof}[Proof for Case 1]
It is given in Proposition \ref{p1}(7) earlier which uses Weyl denominator identity. We use it again to prove other cases also.
\end{proof}

\begin{proof}[Proof for Case 2]
Let $\chi:=\Char\Spin_0 V(\theta_s)$. By Proposition \ref{p2}:
\begin{eqnarray*}
    \chi&=&e^{\rho_s}\prod\limits_{\alpha\in R_s^+}(1+e^{-\alpha}).
\end{eqnarray*}
Weyl denominator identity is:
\begin{eqnarray*}
A_\rho&=&e^{\rho}\prod\limits_{\alpha\in
R_s^+}(1-e^{-\alpha})\prod\limits_{\alpha\in R_l^+}(1-e^{-\alpha}).
\end{eqnarray*}
\begin{eqnarray}
    \chi\;A_\rho&=&e^{\rho_s+\rho}\prod\limits_{\alpha\in R_s^+}(1-e^{-2\alpha})\prod\limits_{\alpha\in R_l^+}(1-e^{-\alpha})\nonumber\\
    &=&e^{\rho_s+\rho}\prod\limits_{\alpha\in\;(2R_s^+\cup R_l^+)}(1-e^{-\alpha}).\label{XArho}
\end{eqnarray}
If for $\g$,
$(\;\left\|\theta\right\|^2/\left\|\theta_s\right\|^2)=2$ (as in
Case 2) then $(2R_s\cup R_l)$ forms the root system of the dual
algebra, denoted by $\gt$, of $\g$. Then the half sum of positive
roots $\rhot$ of $\gt$ is given by $\rhot=\rho_s+\rho$. Thus by Weyl
denominator identity for $\gt$ and equation (\ref{XArho}):
$$\chi\;A_\rho=\tilde{A}_\rhot=\tilde{A}_{\rho_s+\rho}.$$
Since, $\g$ and $\gt$ have the same Weyl group and $A_\mu$ is the
anti-symmetrizer of $\mu$ w.r.t.~Weyl group,
$\tilde{A}_{\rho_s+\rho}=A_{\rho_s+\rho}.$
\begin{eqnarray*}
    \Rightarrow\;\;\;\chi\;A_\rho&=&A_{\rho_s+\rho}.\\
    \Rightarrow\;\;\;\;\;\;\;\;\chi&=&\frac{A_{\rho_s+\rho}}{A_\rho}=\Char V(\rho_s).
\end{eqnarray*}
\end{proof}

\begin{proof}[Proof for Case 3]
For $n\geq 2$, Panyushev \cite[Prop. 3.8]{P} showed that the set of
all nonzero weights of $V(2\theta_s)$ is $S=2R_s\cup R_s\cup R_l$
with multiplicity of each nonzero weight as 1. For $n=1$, define
$R_s:=R,\;R_s^+:=R^+$ and
$R_l:=\left\{\right\},R_l^+:=\left\{\right\}$. Also define
$\prod_{\alpha\in\left\{\right\}}f(\alpha):=1$ for any function $f$.
Let $\chi:=\Char\Spin_0 V(2\theta_s)$. Thus by Proposition \ref{p2}:
\begin{eqnarray*}
    \chi&=&e^{2\rho_s+\rho}\prod\limits_{\alpha\in R_s^+}(1+e^{-2\alpha})\prod\limits_{\alpha\in R_s^+}(1+e^{-\alpha}) \prod\limits_{\alpha\in
    R_l^+}(1+e^{-\alpha}),\\
    A_\rho&=&e^{\rho}\prod\limits_{\alpha\in
    R_s^+}(1-e^{-\alpha})\prod\limits_{\alpha\in
    R_l^+}(1-e^{-\alpha}).
\end{eqnarray*}

\noindent Therefore
\begin{eqnarray*}
    \chi\;A_\rho&=&e^{2\rho_s+2\rho}\prod\limits_{\alpha\in R_s^+}(1-e^{-4\alpha})\prod\limits_{\alpha\in R_l^+}(1-e^{-2\alpha})\\
&&\;\;(=A_{4\rho}=A_{2(\rho_s+\rho)}\text{ as }\rho_s=\rho\;\text{ for }\;n=1)\\
    &=&e^{2(\rho_s+\rho)}\prod\limits_{\alpha\in\;(2R_s^+\cup R_l^+)}(1-e^{-2\alpha})\;\;\;(\text{ For }n\geq 2)\\
&=&\tilde{A}_{2\rhot}\;\;\;\;\;\;\;\;\;(\text{By comparing with } \tilde{A}_{\rhot}\; \text{of dual algebra}\; \gt \text{ for } n\geq 2)\\
&=&A_{2(\rho_s+\rho)}\;\;(\text{As } \rhot=\rho_s+\rho \text{ as in case 2)}\\
\Rightarrow\;\;\;\chi&=&\frac{A_{(2\rho_s
+\rho)+\rho}}{A_\rho}=\Char V(2\rho_s+\rho)\;\;\;(\text{For }n\geq
1).
\end{eqnarray*}
\end{proof}

\subsubsection{Proof of Proposition \ref{p10}.}
\begin{proof}[Proof of ($\Rightarrow$)]

\noindent Let $\Spin_0(\Vh)$ be irreducible and denote its character by $\chi$. Also let $S$ denote the set of all non-zero weights of $V$ and $q:=e^\delta$. Then by Proposition \ref{p5}:
$$\chi=\;\Char\Spin_0(V)\;\; e^{c\Lambda_0}\prod\limits_{k>0,\beta\in S}(1+e^{-\beta}q^{-k})^{m_{\beta}}
                                        \;\prod\limits_{k>0}(1+q^{-k})^{m_0}$$
where $c=\sum_{\beta\in S^+}\frac{1}{2}m_\beta\beta(\theta^\vee)^2$.
Suppose that $\Char\Spin_0(V)=\sum_{i=1}^s\chi_{\nu_i}$ where
$\chi_{\nu_i}$ is the irreducible character with highest weight\;
$\nu_i$.\;We first show that $s=1$ meaning $V$ is coprimary. Weyl
denominator identity for $\gh$ is:
$$\Ah_{\rhoh}=e^{\rho+h^{\vee}\Lambda_0}\;\prod\limits_{\alpha\in R^+}(1-e^{-\alpha})\;\prod\limits_{k>0,\alpha\in R}(1-e^{-\alpha-k\delta})\;\;\prod\limits_{k>0}(1-e^{-k\delta})^n$$
where $h^{\vee}$ is the dual Coxeter number. Using Weyl denominator identity for $\g$, namely, $A_\rho=e^\rho\prod_{\alpha\in R^+}(1-e^{-\alpha})$ and writing $q=e^\delta$, we can say:
$$\Ah_{\rhoh}=A_{\rho}\;\;e^{h^{\vee}\Lambda_0}\;\prod\limits_{k>0,\alpha\in R}(1-e^{-\alpha}q^{-k})\;\;\prod\limits_{k>0}(1-q^{-k})^n$$
Multiplying $\chi$ with $\Ah_{\rhoh}$, we get:
$$\begin{array}{ccl}
    \chi\cdot\Ah_{\rhoh}&=&\left(\sum\limits_{i=1}^s\chi_{\nu_i}\;A_{\rho}\right)\;e^{(c+h^\vee)\Lambda_0}\;\;\; +\;\;\; (...)q^{-1}\;\;\;+\;\;\;(...)q^{-2}\;\;\;+\;\;\;\ldots\\
                      &=&\left(\sum\limits_{i=1}^s A_{\nu_i+\rho}\right)\;\;e^{(c+h^\vee)\Lambda_0}\;\;\; +\;\;\; (...)q^{-1}\;\;\;+\;\;\;(...)q^{-2}\;\;\;+\;\;\;\ldots\;\;
\end{array}$$
$\chi\cdot\Ah_\rhoh$ will contain the term\;$e^{\nu_i+\rho+(c+h^{\vee})\Lambda_0}=e^{\nu_i+c\Lambda_0+\rhoh}$ for each $i=1\ldots s$ where $\nu_i$ is a dominant weight of $\g$. By character of $\Spin_0(V)$, all its weights of are of the form: $\frac{1}{2}\sum_{\beta\in S^+}a_\beta\beta$ \;for some $-m_\beta\leq a_\beta\leq m_\beta$. Since $c=\frac{1}{2}\sum_{\beta\in S^+}m_\beta\beta(\theta^\vee)^2$, \;$\nu_i(\theta^\vee)\leq c$. Also, since $\nu_i$ is a dominant weight of $\g$, this shows that $\nu_i+c\Lambda_0$ is a dominant weight of $\gh$ for all $i=1\ldots s$. Hence $\chi$ contains irreducible $\chi_{\nu_i+c\Lambda_0}$ for each $i$ in its decomposition into irreducibles. So, $s$ must be $1$ because $\chi=\Char\Spin_0(\Vh)$ is irreducible.

Next, we show that when $V=V(2\theta_s)$ for $\g=\mathfrak{so}_{2n+1}\C$ then $\Spin_0(\Vh)$ is not irreducible.
By \cite[Prop 3.8]{P}, the set of all nonzero weights of $V(2\theta_s)$ is $S=2R_s\cup R_s\cup R_l$ with multiplicity of each nonzero weight as 1. This holds for $n=1$ also, if we define $R_s:=R,\;R_l:=\left\{\right\}$ and $R_s^+:=R^+,\;R_l^+:=\left\{\right\}$. Let $\prod_{\alpha\in\left\{\right\}}f(\alpha):=1$ for any function $f$ and $S^+=2R_s^+\cup R_s^+\cup R_l^+$. Now by Proposition \ref{p5}:
$$
\Char\Spin_0(\Vh)=e^{2\rho_s+\rho+c\Lambda_0}\prod_{\beta\in S^+}(1+e^{-\beta})\prod\limits_{k>0,\beta\in S}(1+e^{-\beta-k\delta})\prod\limits_{k>0}(1+e^{-k\delta})^{m_0}\;.
$$

We calculate the level $c$ of the representation $\Spin_0(\Vh)$ as
follows: For $n\geq 2$, $R_s^+=\left\{L_i\right\}$,
$R_l^+=\left\{L_i\pm L_j:i<j\right\}$, and the dual positive roots
are $\left\{2H_i\right\}\cup\left\{H_i\pm H_j:i<j\right\}$, where
$\left\{H_i:1\leq i\leq n\right\}$ is the dual basis of
$\left\{L_i:1\leq i\leq n\right\}$. Then $\theta=L_1+L_2$ and
\;\;$\theta^\vee=H_1+H_2=H_{\alpha_1}+H_{\alpha_n}+2\sum_{i=2}^{n-1}H_{\alpha_i}$.
Thus:
\begin{eqnarray}
    c&=&\left\{\begin{array}{l}
            2n+3\;\;\;\text{ for }n\geq 2 \\
            10\;\;\;\;\;\;\;\;\;\;\text{ for }n=1.
            \end{array}\right.\nonumber
\end{eqnarray}

Also, the multiplicity of zero weight space, $m_0=n$, the rank of $\g$, by Panyushev \cite[Prop 3.8]{P}. Thus, $\Char\Spin_0(\Vh)$ reduces to:
$$
\chi:=\Char\Spin_0(\Vh)=e^{2\rho_s+\rho}\;\;e^{c\Lambda_0}\prod\limits_{\beta\in S^+}(1+e^{-\beta})\prod\limits_{k>0,\beta\in S}(1+e^{-\beta-k\delta})\prod\limits_{k>0}(1+e^{-k\delta})^n.
$$
The highest dominant weight appearing in $\chi$ is \;$\Lambda:=2\rho_s+\rho+c\Lambda_0$. To prove that $\chi$ is not an irreducible character of $\gh$, it is enough to produce another dominant weight appearing in $\chi$, say,\;$\Lambda^{'}$ such that $(\Lambda-\Lambda^{'})$ can not be expressed as non-negative integral linear combination of simple positive roots of $\gh$.
For $n\geq 2$, take $\Lambda^{'}=2\rho_s+\rho+2L_1-\delta+c\Lambda_0$ which corresponds to the term in $\chi$, $e^{2\rho_s+\rho} \;e^{c\Lambda_0}\; e^{-\beta-k\delta}$ for $\beta=-2L_1\in S$ and $k=1$. Clearly, $\Lambda^{'}$ is dominant as $\lambda^{'}:=2\rho_s+\rho+2L_1$ is dominant weight of $\g$ and $\lambda^{'}(\theta^\vee)=2n+2\;\leq c=2n+3.$
$\Lambda-\Lambda^{'}=-2L_1+\delta=\delta-(L_1+L_2)-(L_1-L_2)=\alpha_0-\alpha_1.$
For $n=1$, take $\Lambda^{'}=2\rho_s+\rho+(2\alpha-\delta)+c\Lambda_0=7\rho-\delta+10\Lambda_0$. Then $\lambda^{'}(\theta^\vee)=7\leq 10$. Here, $\Lambda=3\rho+10\Lambda_0$. So, $\Lambda-\Lambda^{'}=-4\rho+\delta=-2\alpha+\delta=(\delta-\alpha)-\alpha=\alpha_0-\alpha_1$.
\end{proof}
\begin{proof}[Proof of ($\Leftarrow$)]
Using Proposition \ref{p1}(7), it is enough to prove :  $$V=V(\theta_s)\;\;\Rightarrow\;\;\Spin_0(\Vh)=V(\rhoh_s),$$
where, $\rhoh_s:=\rho_s+h_s^\vee\Lambda_0$, $h_s^\vee:=\sum_ia_i^\vee$, $i$'s corresponding to short simple roots of $\gh$.

In the proof of Case 2 of Proposition \ref{p9}, we dealt with finite dimensional Lie algebras $\g$ and its dual algebra $\gt$. In the same spirit, here we will deal with the affine Lie algebra $\gh$ for $\g\in\left\{\mathfrak{so}_{2n+1}\C,\; \mathfrak{sp}_{2n}\C,\;\mathfrak{f}_4\right\}$ and its Langlands dual (obtained by reversing the arrows of the Dynkin diagram of $\gh$) denoted by $\gb$. So if $R$ denotes an object associated to $\g$ (say $R=$ the set of roots of $\g$) then the corresponding object (the (multi-)set of roots) associated to $\gt,\;\gh$ or $\gb$ will be denoted by $\tilde{R},\;\hat{R}$ and $\breve{R}$ respectively.
Let $\chi=\Char\Spin_0(\Vh)$.
In order to completly adapt the proof for Case 2 of Proposition \ref{p9}, we will define multisets associated to roots of $\gh$, namely $\widehat{R}_s^+$ and $\widehat{R}_l^+$ and show the following Facts:
\begin{enumerate}
\item $\chi=e^{\rhoh_s}\prod\limits_{\alpha\in\widehat{R}_s^+}(1+e^{-\alpha})$.
\item $\widehat{R}_s^+\;\cup\;\widehat{R}_l^+=\widehat{R}^+$ the multiset of all positive roots of $\gh$.
\item $2\widehat{R}_s^+\;\cup\;\hat{R}_l^+=\Rb^+$ the multiset of all positive roots of $\gb$.
\item $\rhob=\rhoh_s+\rhoh$.
\end{enumerate}

First we introduce the following notation: For any set $A$, define $A_{\left\{k\right\}}:=$\;a multiset consisting of elements of $A$ with each element appearing $k$ times. Further, the multiset $A_{\left\{1\right\}}$ will be written as $A$.

\begin{proof}[Proof of Fact $2$]
Now, the multiset of all positive roots of $\gh$:
$$\widehat{R}^+:=R^+\;\cup\;\;\left\{\alpha+k\delta:\alpha\in R, k\in\Z_{>0}\right\}\;\;\cup\;\;\left\{k\delta:k\in\Z_{>0}\right\}_{n}$$
For $\g\in\left\{\mathfrak{so}_{2n+1}\C,\; \mathfrak{sp}_{2n}\C,\;\mathfrak{f}_4\right\}$, define:
$$\widehat{R}_s^+:=R_s^+\;\cup\;\;\left\{\alpha+k\delta:\alpha\in R_s, k\in\Z_{>0}\right\}\;\;\cup\;\;\left\{k\delta:k\in\Z_{>0}\right\}_{n_s}$$
where $n_s:=$ number of short simple positive roots of $\g$. Similarly,
$$\widehat{R}_l^+:=R_l^+\;\cup\;\;\left\{\alpha+k\delta:\alpha\in R_l, k\in\Z_{>0}\right\}\;\;\cup\;\;\left\{k\delta:k\in\Z_{>0}\right\}_{n-n_s}.$$
Clearly, $2$ is true.
\end{proof}

\begin{proof}[Proof of Fact $3$]
$$\g\in\left\{\mathfrak{so}_{2n+1}\C,\; \mathfrak{sp}_{2n}\C,\;\mathfrak{f}_4\right\}\;\Rightarrow\;\left(\left\|\theta\right\|^2/\left\|\theta_s\right\|^2\right)=2\; \;\text{in}\;\g $$
$$\Rightarrow\;\;\gh\in\left\{B^{(1)}_{n},C^{(1)}_n,F^{(1)}_4\right\}\;\Rightarrow\;\gb\in\left\{A^{(2)}_{2n-1}, D^{(2)}_{n+1},E^{(2)}_6\right\}.$$
Note that, $\gb$ is a {\it twisted} affine Lie algebra which contains the finite dimensional dual algebra $\gt$ of $\g$. The set of all positive roots of $\gt$ is $2R_s^+\cup R_l^+$. Now 3 is obvious for real roots or one can check directly using \cite[Prop. 6.3]{K}. We only need to check the multiplicities of imaginary positive roots. We observe that, in $2\widehat{R}_s^+\;\cup\;\hat{R}_l^+$ the multiplicity of $2k\delta$ is $n_s+(n-n_s)=n$ and multiplicity of $(2k+1)\delta$ is $n-n_s$ which matches with multiplicities given by V. Kac \cite[Corollary 8.3]{K} for twisted affine Lie algebra $\gb$.
\end{proof}

\begin{proof}[Proof of Fact 4]
Let $\prod_s,\;\prod_l$ denote the set of all simple positive short and long roots of $\gh$ respectively. Also, let $\sum_s,\;\sum_l$ denote the set of all simple positive coroots corresponding to short and long roots of $\gh$ respectively. Then $(2\prod_s\cup\prod_l)$ and $\left(\frac{1}{2}\sum_s\cup\sum_l\right)$ forms the sets of simple positive roots and coroots of $\gb$ respectively. From this we can conclude that $\rhob=\rhoh_s+\rhoh$.
\end{proof}

\begin{proof}[Proof of Fact 1]
Refer to Proposition \ref{p5} for expression for
$\chi:=\Char\Spin_0(\Vh)$. It's easy to check that set of all
nonzero weights of $V=V(\theta_s)$ is $S=R_s$. Since multiplicity of
zero weight space is $n_s$,  Proposition \ref{p5} leads to :
$$\chi=e^{\nu+c\Lambda_0}\prod\limits_{\alpha\in \widehat{R}_s^+}(1+e^{-\alpha})$$
where, $\nu:=\frac{1}{2}\sum_{\alpha\in R_s^+}\alpha=\rho_s$ and $c=\frac{1}{2}\sum_{\alpha\in R_s^+}\alpha(\theta^\vee)^2.$

We will show that $\nu+c\Lambda_0=\rhoh_s$. Since, $\theta$ is always a long root for $\g\in\left\{\mathfrak{so}_{2n+1}\C,\; \mathfrak{sp}_{2n}\C,\;\mathfrak{f}_4\right\}$, $\alpha(\theta^\vee)$ is $0$ or $1$ for all $\alpha\in R_s^+$ due to following Lemma.

\begin{lemma} \label{l3}
{ $$\alpha\in R^+\setminus\left\{\theta\right\}\;\Rightarrow\;\alpha(\theta^\vee)=0\text{ or }1\;.$$}
\end{lemma}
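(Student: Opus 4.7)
The plan is to analyze the $\theta$-string through $\alpha$ in the root system of $\g$. Since $\g$ is simple (as we are in the context of Proposition \ref{p10} with $\g \in \{\mathfrak{so}_{2n+1}\C, \mathfrak{sp}_{2n}\C, \mathfrak{f}_4\}$), the root system is irreducible and $\theta$ is its unique highest root. For any $\alpha \in R^+ \setminus \{\theta\}$, the sum $\alpha + \theta$ cannot be a root (since $\theta$ is highest) and is nonzero (since $\alpha$ is positive). Equivalently, viewing $\g$ as an $\sl_2$-module under the principal triple attached to $\theta$, the vector $X_\alpha$ is a highest-weight vector with weight $\alpha(\theta^\vee)$, so this weight is a non-negative integer.

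The remaining task is to rule out $\alpha(\theta^\vee) \geq 2$. Suppose for contradiction that $\alpha(\theta^\vee) = p \geq 2$. Then the $\theta$-string through $\alpha$ gives us that $\alpha - \theta$ and $\alpha - 2\theta$ are both roots of $\g$. Because $\theta$ is the unique highest positive root, the height of $\alpha$ is strictly less than that of $\theta$, so $\alpha - \theta$ has strictly negative height and is therefore a negative root; write $\alpha - \theta = -\gamma$ for some $\gamma \in R^+$. Then $\alpha - 2\theta = -(\gamma + \theta)$, so $\gamma + \theta$ must be a root, which contradicts the maximality of $\theta$.

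Combining both parts gives $\alpha(\theta^\vee) \in \{0, 1\}$, as required. The argument is essentially elementary root-system combinatorics and requires no hypothesis beyond irreducibility of $R$; the only mildly delicate point is the observation that the highest root of a simple Lie algebra is unique, which ensures $\mathrm{ht}(\alpha) < \mathrm{ht}(\theta)$ strictly. I do not expect any serious obstacle.
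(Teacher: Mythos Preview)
Your proof is correct and follows essentially the same approach as the paper: both arguments analyze the $\theta$-string through $\alpha$ (equivalently, the $\mathfrak{s}_\theta$-module generated by $X_\alpha$), using that $\alpha+\theta\notin R$ for $\alpha\in R^+$, that $\alpha-\theta$ (if a root) must be negative, and that $\alpha-2\theta$ can then never be a root. The only cosmetic difference is that the paper phrases this as a two-case analysis (the string has length $1$ or $2$), whereas you phrase the last step as a contradiction.
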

\begin{proof} Verify the following facts about any finite root system $R=R^+\cup R^-$.
\begin{enumerate}
\item $ \alpha\in R^+\;\;\Rightarrow\;\;\alpha+\theta\notin R$\;.
\item $ \alpha\in R^-\;\;\Rightarrow\;\;\alpha-\theta\notin R$\;.
\item For $\alpha\in R^+$,\;
    $\alpha-\theta\in R\;\;\Rightarrow\;\;\alpha-\theta\in R^-\;\;\Rightarrow\;\;\alpha-2\theta\notin R$\;.
\end{enumerate}
Consider the restriction of the adjoint representation, $V(\theta)$,
to the $\mathfrak{sl}_2\C$ corresponding to $\theta$,
$\sm_\theta:=\C X_{\theta}\oplus\C X_{-\theta}\oplus\C \theta^\vee$.
Using above facts, we conclude; for a fixed $\alpha\in
R^+\setminus\left\{\theta\right\}:$
\begin{itemize}
\item   If $\alpha-\theta\notin R$, then $\C X_\alpha$ is a trivial irreducible component of $V(\theta)$ as an $\sm_\theta$-representation, and thus $\alpha(\theta^\vee)=0$.
\item If $\alpha-\theta\in R$, then $\C X_\alpha\oplus\C X_{\alpha-\theta}$ is an irreducible component of $V(\theta)$ as an $\sm_\theta$-representation, and thus $\alpha(\theta^\vee)=1$.
\end{itemize}
This proves Lemma \ref{l3}.
\end{proof}

Thus, $\alpha(\theta^\vee)$ is $0$ or $1$ for all $\alpha\in R_s^+$ $\Rightarrow\;\alpha(\theta^\vee)^2=\alpha(\theta^\vee)$. Therefore, $c=\rho_s(\theta^\vee)=h_s^\vee$ as $\rho_s=$ the sum of all fundamental weights of $\g$ corresponding to simple short roots and $\theta^\vee=\sum_{i=1}^n a_i^\vee\alpha_i^\vee$. We get, $\nu+c\Lambda_0=\rho_s+h_s^\vee\Lambda_0=\rhoh_s$ which proves the Fact 1.
\end{proof}

Finally we get the proof of ($\Leftarrow$) in Proposition \ref{p10} by replacing $\rho,\rho_s,R_s^+$ and $R_l^+$  by $\rhoh,\rhoh_s,\widehat{R}_s^+$ and $\widehat{R}_l^+$
respectively in Proof of Case 2 in Proposition \ref{p9}.
\end{proof}
\newpage
  
\end{document}